\documentclass[12pt]{article}

\usepackage[hyperfootnotes=false,colorlinks=true]{hyperref}
\usepackage{amsthm,amssymb,amscd,amsmath,amstext,mathrsfs,amsfonts,pb-diagram,enumerate,graphicx}

\usepackage[usenames,dvipsnames]{color}
\usepackage{ifthen}
\usepackage{makeidx}


\makeindex
\newcommand{\rev}[2]{\textcolor{blue}{#1}\index{#2}\ensuremath{^{\text{\textcolor{red}{#2}}}}}
\newcommand{\secrev}[2]{\textcolor{Brown}{#1}}
\newcommand{\thirdrev}[2]{\textcolor{Orange}{#1}\footnote{#2}}

\renewcommand{\rev}[2]{#1}
\renewcommand{\secrev}[2]{#1}
\renewcommand{\thirdrev}[2]{#1}{}

\newcommand{\binomial}[2]{\left(\begin{matrix}#1\\#2\end{matrix}\right)}

\newtheorem{theorem}{Theorem}
\theoremstyle{plain}

\newtheorem{corollary}[theorem]{Corollary}
\newtheorem{lemma}[theorem]{Lemma}
\newtheorem{proposition}[theorem]{Proposition}
\theoremstyle{definition}
\newtheorem{definition}[theorem]{Definition}
\newtheorem{remark}[theorem]{Remark}
\numberwithin{equation}{section}

\def\al{\alpha}

\def\ga{\gamma}

\def\de{\delta}
\def\De{\Delta}
\def\ep{\varepsilon}
\def\ze{\zeta}

\def\ka{\kappa}
\def\la{\lambda}

\def\si{\sigma}

\def\Si{\Sigma}

\def\Om{\Omega}

\def\C{{\mathbb C}}
\def\G{{\mathbb G}}
\def\K{{\mathbb K}}
\def\L{{\mathbb L}}
\def\N{{\mathbb N}}
\def\P{{\mathbb P}}

\def\R{{\mathbb R}}

\def\U{{\mathbb U}}

\def\CD{\mathcal{D}}
\def\CG{\mathcal{G}}

\def\CI{\mathcal{I}}
\def\CL{\mathcal{L}}
\def\CM{\mathcal{M}}
\def\CN{\mathcal{N}}
\def\CP{\mathcal{P}}

\def\CR{\mathcal{R}}
\def\CS{\mathcal{S}}

\def\CW{\mathcal{W}}
\def\CX{\mathcal{X}}
\def\CY{\mathcal{Y}}
\def\CZ{\mathcal{Z}}

\def\GLS{{\mathbb G }{\mathbb L}_{n,m}^>}
\def\Knm{{\mathbb K}^{n \times m}}

\newcommand{\im}{\mbox{Im }}
\newcommand{\rank}{\mbox{rank }}
\newcommand{\trace}{\mbox{\rm trace }}
\newcommand{\diag}{\mbox{diag }}

\newcommand{\hess}[1]{{D^2#1}}
\newcommand{\hessk}[1]{{D^2_{\ka}#1}}

\def \grad {{\rm grad\ }}

\newcommand{\ra}{\rightarrow}

\renewcommand{\Re}{\mathrm{Re}}

\def\x{\dot{x}}

\def\A{{ A}}

\def\CL{\mathcal{L}}
\def\CP{\mathcal{P}}
\def\GL{\mathbb{GL}_{n,m}}
\def\Pk{\mathcal{P}_{(k)}}
\def\Dk{\mathcal{D}_{(k)}}
\def\SD{\overline{\mathcal{SD}^{\,2}}}

\def\Knm{\K^{n \times m}}

\def\RN{\mathrm{N}}

\newcommand{\immersion}{i}

\title{Convexity properties of the condition number II
\thanks{Mathematics Subject Classification (MSC2010): 53C23 (Primary), 65F35, 15A12 (Secondary).}
}
\author{Carlos Beltr\'an
        \thanks{C. Beltr\'an,
               Departamento de Matem\'aticas, Estad. y Comput.
               Universidad de Cantabria,
               Santander, Espa\~na               
               ({\tt carlos.beltran@unican.es}). CB was supported by MTM2007-62799 \secrev{and MTM2010-16051, Spanish Government}{C1}. 
}              
\and      Jean-Pierre Dedieu
          \thanks{J.-P. Dedieu,
               Institut de Math\'ematiques,
               Universit\'e Paul Sabatier,
               31062 Toulouse cedex 09, France
                ({\tt jean-pierre.dedieu@math.univ-toulouse.fr}). J.-P. Dedieu was 
supported by the ANR Gecko and by the Fields Institute at Toronto.
}
\and    Gregorio Malajovich
        \thanks{G. Malajovich,
                Departamento de Matem\'atica Aplicada,
		Instituto de Matem\'atica,
                Universidade Federal do Rio de Janeiro,
                Caixa Postal 68530,
                CEP 21945-970, Rio de Janeiro, RJ, Brazil
                ({\tt gregorio.malajovich@gmail.com}). He was partially supported by 
CNPq, FAPERJ and CAPES from Brazil, and by
the Brazil-France agreement of cooperation in Mathematics.
}
\and    Mike Shub
				\thanks{M. Shub,
\secrev{CONICET, IMAS, Universidad de Buenos Aires, Argentina and CUNY Graduate School, New York, NY, USA.
({\tt shub.michael@gmail.com})
M.S. was partially 
supported by a NSERC Discovery Grant, by CONICET PIP 0801 2010-2012 and by
ANPCyT PICT 2010-00681.
}{}}
\thanks{\secrev{J.-P.D., G.M. and M.S were partially supported by the
MathAmSud grant {\em Complexity}.}{}}
}
\begin{document}
\addtocounter{footnote}{1}
\maketitle
\medskip

\centerline {This paper is dedicated to Steve Smale, on his 80th birthday.}

\begin{abstract}
\rev{In our previous paper~\cite{BDMS}, we studied the condition metric
in the space of \thirdrev{maximal rank}{Instead of non singular} $n \times m$ matrices. Here, we show that
this condition metric induces a Lipschitz-Riemann structure on \thirdrev{that space}{Avoid repetition of ``on the space of XXX matrices''}.}{mi01}
After investigating
geodesics in such a nonsmooth structure, we show that
the inverse of the smallest singular value of a matrix is
a log-convex function \secrev{along geodesics}{JP1} (Theorem~\ref{th-1}). 

We also show that a similar result holds for the solution
variety of linear systems (Theorem~\ref{the-7.8}). 

Some of our intermediate results, such as Theorem~\ref{th-2},
on the \rev{second covariant derivative or }{ma10} {\em Hessian} 
of a function with symmetries on a manifold,
and Theorem~\ref{the-5.2} on piecewise self-convex functions, 
are of independent interest.

\rev{Those results were motivated by our investigations on the
complexity of path-following algorithms for solving polynomial
systems.}{S2}
\end{abstract}

\section{Introduction}

Let two integers $1 \leq n \leq m$ be given and let us consider the space of matrices $\Knm$, $\K = \R$ or $\C$,
equipped with the Frobenius inner product
$$\left\langle M,N \right\rangle_F = \trace (N^*M) = \sum_{i,j}m_{ij}\overline{n_{ij}}.$$
We denote by
$$\si_1(A) \ge \ldots \ge \si_{n-1}(A) \ge \si_n(A) \ge 0$$
the singular values of a matrix $A \in \Knm$, by $\GL$ the space of matrices $A \in \Knm$ with maximal rank, that is
$\rank A = n$ or, equivalently, $\si_n(A) > 0$, and by $\CN$ the set of singular (or rank deficient) matrices: 
$$\CN = \Knm \setminus \GL = \left\{ A \in \Knm \ : \ \si_n(A) = 0 \right\} .$$
The distance of a matrix $A \in \Knm$ from $\CN$ is given by its smallest singular value:
$$d_F(A, \CN) = \min_{S \in \CN} \left\| A-S \right\|_F = \si_n(A).$$

Consider now the problem of connecting two matrices with the shortest possible path staying, as much as possible, away
from the set of singular matrices. We realize this objective by considering an absolutely continuous path $A(t)$, $a \le
t \le b$, with given endpoints (say $A(a) = A$ and $A(b) = B$) which minimizes 
its {\bf condition length} defined by
$$L_\ka = \int_a^b \left\|\frac{dA(t)}{dt}\right\|_F \si_n(A(t))^{-1}dt.$$
We call {\bf minimizing condition path} an absolutely continuous path which minimizes this integral in the set of
absolutely continuous paths with the same end-points. We define a {\bf minimizing condition geodesic} as a minimizing
condition path parametrized by the condition arc length, that is when
$$ \left\|\frac{dA(t)}{dt}\right\|_F \si_n(A(t))^{-1} = 1 \mbox{ a. e.}$$
A {\bf condition geodesic} is an absolutely continuous path which is locally a minimizing condition geodesic.
This concept of geodesic is related to the Riemannian structure defined on $\GL$ by:
$$\left\langle M,N \right\rangle_{\ka, A} = \si_n(A)^{-2} {\Re}\left\langle M,N \right\rangle_F.$$
We call it the {\bf condition Riemann structure} on $\GL$. 

Our objective is to investigate the properties of the smallest singular value $\sigma_n(A(t))$ along a condition
geodesic. Our main result says: 

\begin{theorem} \label{th-1} For any condition geodesic $t \ra A(t)$ in $\GL$, the map 
\rev{$t \ra \log \left(\si_n^{-2}
(A(t))\right)$}{ma01} is convex. 
\end{theorem}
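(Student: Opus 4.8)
The plan is to reduce the statement to a second-derivative computation along the geodesic and show the resulting quantity is nonnegative. Write $\si = \si_n(A(t))$ for the smallest singular value, and set $g(t) = \log(\si^{-2}) = -2\log\si$. Since the curve is parametrized by condition arc length, we have $\|\dot A(t)\|_F = \si$ almost everywhere, which is the normalization we will exploit throughout. The goal is $\ddot g \ge 0$ in the appropriate (weak/viscosity) sense, which amounts to showing $\ddot\si \le \dot\si^2/\si$, equivalently that $t \mapsto \si^{-1}$ has nonnegative second derivative, i.e. $\si^{-1}$ is convex along the geodesic; taking a further logarithm and using that $\log$ of a positive convex function need not be convex in general means we really do want the sharper bound $(\log\si^{-2})'' \ge 0$, so the computation must be done carefully rather than by monotonicity of $\log$.

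First I would recall the first-order variational formula for the smallest singular value. Away from the coalescing locus $\si_{n-1} = \si_n$, the function $A \mapsto \si_n(A)$ is smooth, with $\si_n(A) = \langle A u, v\rangle$ (real part, in the complex case) where $u, v$ are the corresponding right/left singular unit vectors, and its derivative in a direction $\dot A$ is $\Re\langle \dot A u, v\rangle$. I would then differentiate $\si(t) = \Re\langle A(t) u(t), v(t)\rangle$ a second time along $A(t)$, using the geodesic equation for the condition metric to substitute for $\ddot A(t)$. The condition metric is conformal to the flat Frobenius metric with conformal factor $\si_n^{-2}$, so its Christoffel symbols have the explicit form associated to a conformal change, $\Gamma = -(\nabla\log\si_n)\otimes \mathrm{id} - \mathrm{id}\otimes(\nabla\log\si_n) + \langle\,\cdot\,,\,\cdot\,\rangle_F\,\nabla\log\si_n$ (all computed with respect to Frobenius). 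Plugging the geodesic equation $\ddot A = -\Gamma(\dot A, \dot A)$ into the second derivative of $\si$, and using $\|\dot A\|_F^2 = \si^2$ together with $\dot\si = \langle \dot A, u v^*\rangle_F = \Re\langle\dot A u, v\rangle$ and $\nabla\si_n = uv^*$ (the rank-one matrix), should produce a clean expression for $\ddot\si$ in terms of $\dot\si$, $\si$, and the second-order terms $\dot u, \dot v$.

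The main obstacle — and the heart of the argument — will be controlling those second-order singular-vector terms: the second derivative of $\si_n$ in a fixed direction is $\Re\langle\dot A u,v\rangle' $ minus a manifestly nonpositive curvature-type correction built from $\sum_{i<n}\frac{|\langle\dot A u_i, v\rangle + \langle \dot A u, v_i\rangle|^2}{\si_n - \si_i}$ (the standard perturbation series for a simple singular value), and one must check that after the conformal substitution the remaining terms assemble into something of the form $\dot\si^2/\si$ plus nonnegative junk. Concretely, I expect that the naive Hessian of $g$ in a direction $\dot A$ of Frobenius-norm $\si$ equals a nonnegative multiple of $\|\dot A\|_F^2 - (\dot\si)^2/\si^0$-type residuals plus the curvature sum, and that the geodesic (conformal-connection) correction exactly cancels the one potentially-bad term, leaving $\ddot g \ge 0$. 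I would organize this as: (i) the smooth case away from $\{\si_{n-1}=\si_n\}$, via the explicit computation above; (ii) handle crossing of the coalescing locus by a semicontinuity / viscosity-subsolution argument, invoking the machinery on nonsmooth (Lipschitz–Riemann) structures and piecewise self-convex functions developed in the earlier sections (Theorems~\ref{th-2} and~\ref{the-5.2}) to conclude global convexity from convexity on the open dense smooth part; and (iii) note that $\si_n > 0$ along any condition geodesic in $\GL$ so the logarithm is well-defined and the path never reaches $\CN$ in finite condition length.
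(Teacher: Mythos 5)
Your step (i), the explicit Hessian/perturbation computation on the open set $\GLS = \{\sigma_{n-1} > \sigma_n\}$, recovers the main result of \cite{BDMS}, which the present theorem is stated as extending; so it is correct in spirit but is not the new content. The genuine gap is in step (ii). You propose to ``conclude global convexity from convexity on the open dense smooth part'' by a semicontinuity/viscosity argument --- but a condition geodesic in $\GL$ need not merely \emph{cross} the coalescing locus $\{\sigma_{n-1}=\sigma_n\}$: it can spend a set of times of positive measure inside it, and can even lie entirely in one of its strata. Convexity on the dense open part then says nothing about what happens there, and no semicontinuity argument can bridge the gap because $\sigma_n$ is not $C^1$ across the coalescing locus. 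This is also why the piecing-together Theorem~\ref{the-5.2} you cite is not a statement about a dense open subset: its hypothesis (2) requires $C^2$-regularity and self-convexity of $\alpha$ restricted to \emph{every} submanifold in a countable decomposition $\CM = \cup_i \CM_i$, including the lower-dimensional pieces.

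The paper's actual proof stratifies $\GL = \cup_{(k)} \CP_{(k)}$ by the multiplicity pattern of the singular values. Proposition~\ref{prop-1} shows each $\CP_{(k)}$ is a smooth submanifold on which $\sigma_n=\sigma_u$ is $C^\infty$ even when $k_u>1$, and Proposition~\ref{prop-2} establishes self-convexity of $\sigma_u^{-2}$ on each $\CP_{(k)}$ --- not via the conformal-Christoffel computation you sketch, but via the group-of-symmetries reduction of Theorem~\ref{th-2} and Corollary~\ref{cor-1}: split tangent vectors into components tangent and normal to the $\U_n\times\U_m$-orbit, reduce to the totally geodesic diagonal slice $\CD_{(k)}$ (Lemma~\ref{lem-4}), and control the orbit directions by a Killing-field inequality (Lemma~\ref{lem-5}). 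Finally, one must still verify the remaining hypotheses of Theorem~\ref{the-5.2}, namely regularity of $\alpha$ and $\SD\alpha > -\infty$; the paper does this using the concavity of the smallest eigenvalue $\lambda_n$ on positive definite matrices together with $\sigma_n^{-2}(A)=\lambda_n^{-1}(AA^*)$. None of these steps appear in your outline, and they are where the new content of the theorem lies.
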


This theorem extends our main result in \cite{BDMS}. In that paper, the same theorem is proven for those condition
geodesic arcs contained in the open subset
$$\GLS = \left\{ A \in \GL \ : \ \sigma_{n-1}(A) > \sigma_{n}(A)\right\}$$
that is when the smallest singular value $\sigma_{n}(A)$ is simple. The reason for this restriction is easy to explain.
The smallest singular value $\sigma_n(A)$ is smooth in $\GLS$, and, in that case, we can use the toolbox of Riemannian
geometry. But it is only locally Lipschitz in $\GL$; for this reason we call the condition structure in $\GL$ a {\bf
Lipschitz-Riemannian structure}. 

%


\bigskip
\noindent
\rev{{\bf Motivation:}}{S1}
Let us now say a word about our motivations. 
The today classical papers \cite{bez1}, \cite{bez2}, and \cite{bez5} by
Shub and Smale relate complexity bounds for homotopy methods to solve 
\thirdrev{polynomial systems}{Remark 1}
 to the condition number of the
encountered problems along the considered homotopy path. Ill-conditioned problems slow the algorithm and increase its
complexity. For this reason it is natural to consider paths which avoid ill-posed problems, and, at the same time, are
as short as possible. The condition metric has been designed to construct such paths. It has been introduced by Shub in
\cite{shu}, then studied by Beltr\'an and Shub in \cite{bel} in spaces of polynomial equations 
\thirdrev{(see also~\cite{DMS, Beltran, Malajovich}.) When we started
to work in this project, we expected to reduce the more general problem of
finding good homotopy paths for non-linear systems to the `linear' case.
Unfortunately this seems to be a harder problem, to be pursued later.}{Reintroduced references + a transition sentence.} 

The case of linear maps (and related spaces) appears in Beltr\'an-Dedieu-Malajovich-Shub \cite{BDMS} and Boito-Dedieu \cite{boi}. 
\thirdrev{}{REMOVED TEXT: The main result relating condition metric and complexity appears in
~\cite{DMS}(see also~\cite{Beltran} and ~\cite{Malajovich}): there is an algorithm 
that, given a continuous path 
$(f_t)_{t \in [0,1]}$ in the
space of systems of homogeneous polynomial equations, computes
a mesh $0=t_0 < t_1 < \cdots < t_k=1$ and approximate zeros
(in the sense of Smale) $x_i$ associated to $\zeta_{t_i}$,
where
$f_t(\zeta_t) \equiv 0$. This algorithm terminates in time
linear in
\[
D^{3/2} \epsilon^{-2} L(f_t, \zeta_t)
\]
where $D$ is the maximum degree of the equations, $\epsilon$ is
an arbitrary parameter and $L(f_t, \zeta_t)$ is the 
{\bf condition length} of the path $(f_t, \zeta_t)$. 
}

In the linear case, it is rather a remarkable fact \secrev{that}{C2} the inverse of the \rev{squared}{ma01} distance to singular matrices 
\rev{$\si_n^{-2} (A(t))$}{ma01}
is log-convex along the condition geodesics. So, in particular, the maximum of \rev{$\log \left( \sigma_n(A(t))^{-2}\right)$}{ma01}
and the maximum of $ \sigma_n(A(t))^{-1}$ along such paths is necessarily obtained at its endpoints 
and the condition geodesics stay away from singular matrices. 

\thirdrev{This is clearly not true in the usual metric, since
straight lines can get arbitrarily close to the variety of degenerate
matrices. This suggests the following application:}{Self-convexity}

\thirdrev{
If we consider a condition path connecting a given $A \in \GL$
to (for example) $I_{n,m}\| A \|_F / \sqrt{n}$ ($I_{n,m}(i,j) = 1$ if $i=j$ and $0$ otherwise), for any matrix $A(t)$ in this path, 
according to Theorem \ref{th-1}, one has
$$\frac{\sqrt{n}}{\| A \|_F} \leq \sigma_n(A(t))^{-1} \leq \sigma_n(A)^{-1}.$$
We think this property may help to find good \thirdrev{preconditioners}{Remark 4} to solve linear systems. 
}{Self-convexity}

\thirdrev{}{REMOVED TEXT(JP):
We think this property may help to find good preconditioners
to solve linear systems which would be a good area for future
research.} 

\thirdrev{}{REMOVED TEXT(JP):
We don't know if the analogue of Theorem~\ref{the-7.8} holds 
in the solution variety corresponding to the space of polynomial systems,
although Proposition~\ref{restricted-case} proves it for systems which
vanish at a given point. If self-convexity of the condition number where
to hold into the 
\thirdrev{solution}{Remark 5} variety itself we would have a good geometric
picture of what is involved in choosing a homotopy path in an optimal
(or near optimal) way. 
}

\thirdrev{
There are other motivations. Convexity of the distance or similar function to the ill-posed
problems may play a role in optimization. Witness for example the role
played by the barrier function in linear programming theory. Two of us
will be expanding on this theme in a forthcoming
paper.}{Self-convexity}

\bigskip

\noindent
\rev{{\bf Outline of the paper}}{S1}

\rev{The condition number is not of class $\mathcal C^1$,
hence we cannot apply the usual Riemannian geometry to the
condition metric. In {\bf Section 2}, we introduce Lipschitz-
Riemann structures and develop the basic results, that allow
us to do differential geometry in the non-smooth case.
Using nonsmooth analysis techniques, we prove that any
condition
geodesic is $\mathcal C^1$ with a locally Lipschitz derivative (Theorem \ref{the-2.1}). Such techniques are already present in
Boito-Dedieu \cite{boi}.}{S1}

\rev{In {\bf Section 3} we develop an important tool for proving self-convexity,
allowing a more systematic use of the symmetries. (A symmetry is an
isometry of a manifold that leaves a function invariant).
Theorem \ref{th-2} gives a simplified computation of the Hessian when there is a Lie group of symmetries. This theorem
may be of independent interest. It is so natural we would not be surprised if it is already known, but we have not found
it anywhere. We were led to this theorem sometime after a conversation with John Lott on Hessians and Riemannian
submersions while he was visiting the University of Toronto.}{S1} 

\rev{The strategy for proving the main theorem is to decompose the space
of matrices in a finite union of smooth manifolds, so that in
each of them the metric is smooth. In {\bf Section 4} we produce
this decomposition, we study the group of symmetries of the condition
number and then, using Theorem~\ref{th-2}, we establish self-convexity
on each piece.}{S1}

\rev{In {\bf Section 5}, we prove a result that 
may be of independent interest, Theorem \ref{the-5.2}: piecing
together convexity results on restrictions of the Lipschitz-Riemann structure to a union of submanifolds of varying
dimensions, where the structure is smooth, to obtain a global result. 
}{S1}

\rev{In {\bf Section 6}, we use all these tools to finish the proof of 
Theorem~\ref{th-1}.
We use the same tools in {\bf Section 7} to state and prove
Theorem~\ref{the-7.8} about self-convexity in the solution variety}{S1}
\thirdrev{
\[
\CW = \left\{
(A,x) \in \mathbb{GL}_{n,n+1} \times \P(\K^n): Ax = 0
\right\} \ .
\]
\rev{Above, the notation $\mathbb P(\mathbb E)$ denotes the projectivization
of a linear space $\mathbb E$. Namely, it is the space (manifold) of real or 
complex
lines in $\mathbb E$ passing through the origin. For instance,
$\mathbb P(\mathbb R^3)$ is the classical projective plane, that can
also be obtained by identifying antipodal points of the sphere $\mathbb{S}^2$.}{mi02}
}{Misplaced text: this was in Motivations while it belongs to the outline.}

\bigskip

\noindent
{\bf Acknowledgements.} 
We have benefited greatly from conversations with our colleagues Charles Pugh and Vitaly Kapovitch about
Lipschitz-Riemann structures, especially those conformally equivalent to smooth structures by locally Lipschitz scaling
maps.

Some of this work was accomplished when we met not only in our intitutions, but also at the
Institut de Matem\`{a}tica de la Universitat de Barcelona \secrev{and}{M3} at the Thematic Program
in the Foundations of Computational Mathematics (FoCM) at the Fields Institute. We thank
these institutions. \secrev{Also, we would like to thank an anonymous
referee \thirdrev{for many helpful comments}{Added text}.}{}

\section{Geodesics in Lipschitz-Riemann structures, and self-convexity} \label{sec-2}

\subsection{Lipschitz-Riemann structures} \label{sec-2.1}

\rev{Most textbooks of Riemannian geometry define a Riemannian structure
on a smooth manifold $\CM$ as a scalar product
$\langle \cdot , \cdot \rangle_x$ on each tangent space $T_x \CM$, depending smoothly on
$x$. Here we drop the smoothness hypothesis.
\begin{definition}
A {\bf  Lipschitz-Riemann structure} on a $\mathcal C^2$ manifold $\CM$ is
a scalar product
$\langle \cdot , \cdot \rangle_x$ 
at each $T_x \CM$, such that its coefficients are locally Lipschitz functions 
of $x$. Also,
let $\| u \|_x = \sqrt{ \langle u,u \rangle_x}$ be the associated norm in
$T_x\CM$.
\end{definition}
}{S1} 

The {\bf length} of an absolutely continuous path $x(t) \in \CM$, $a \le t \le b$, is defined as the integral
$$\rev{L(x,a,b) = \int_a^b \| \dot x(\tau) \|_{x(\tau)} d\tau,}{mi03}$$
where $\dot x(t)$ denotes the derivative with respect to $t$. Its  {\bf arc length} is given by the map 
$$t \in [a,b] \ra L(x,a,t) \in [0, L(x,a,b)].$$
The {\bf distance} $d(a,b)$ between two points $a,b \in \CM$ is the infimum of all the lengths of the paths containing
$a$ and $b$ in their image.
We call {\bf minimizing  path} an absolutely continuous path such that $L(x,a,b) = d(a,b)$. 
\medskip

\rev{It is usual in differential geometry textbooks to construct
geodesics as solutions of a certain second order differential equation,
the {\em geodesic differential equation}. Unfortunately, the coefficients
of this equation are given by a formula in terms of the partial
derivatives of the metric coefficients. In a Lipschitz-Riemann structure,
those coefficients are assumed to be Lipschitz, not necessarily
differentiable functions. Also, it turns out that minimizing paths are
not necessarily smooth.}{ma02}

We define a {\bf minimizing 
geodesic} 
as a minimizing  path parametrized by  arc length, that is when
$$ \left\| \dot x(t) \right\|_{x(t)} = 1 \mbox{ a. e.}$$
A path in $\CM$ parametrized by arc length is a {\bf geodesic} when it is locally a minimizing geodesic.

The main result of this section is the following: 

\begin{theorem} \label{the-2.1} Any geodesic \rev{for a Lipschitz-Riemann 
structure}{ma05} belongs to the class $\mathcal C^{1+ Lip}$ that is $\mathcal C^1$ with a locally Lipschitz
derivative. 
\end{theorem}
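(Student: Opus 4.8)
The plan is to prove the regularity statement by a localization argument that reduces the Lipschitz-Riemann case to a perturbation of the smooth Riemannian case, and then to apply a nonsmooth analysis / variational argument of the kind already used by Boito and Dedieu in \cite{boi}. First I would work in a coordinate chart around a point on the geodesic $x(t)$, so that the metric is given by a matrix $G(x) = (g_{ij}(x))$ whose entries are locally Lipschitz, hence (by Rademacher) differentiable almost everywhere, with bounded first derivatives. In these coordinates a minimizing geodesic is an absolutely continuous curve that minimizes $\int \sqrt{\dot x^T G(x) \dot x}\, dt$ (equivalently, by the standard reparametrization trick, the energy $\int \dot x^T G(x)\dot x\, dt$ among curves with fixed endpoints and fixed time interval). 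The first task is to show such a minimizer is $C^1$; the second is to upgrade $C^1$ to $C^{1+\mathrm{Lip}}$.

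For the $C^1$ step I would write the first-order optimality (Euler--Lagrange) condition in integrated form: for any compactly supported variation $\xi(t)$,
\[
\int_a^b \Bigl( 2\, \xi^T G(x)\dot x + \langle \nabla_x (\dot x^T G(x)\dot x),\ \textstyle\int_a^t \xi \rangle \Bigr)\, dt = 0,
\]
or better, integrate by parts to obtain that $G(x(t))\dot x(t)$ equals a fixed vector plus an absolutely continuous function (the integral of the bounded "force" term $\tfrac12 \nabla_x(\dot x^T G(x)\dot x)$, which is in $L^\infty$ since $\dot x$ is essentially bounded on a minimizer parametrized by arc length and $DG$ is essentially bounded). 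Hence $t \mapsto G(x(t))\dot x(t)$ is Lipschitz. Since $G$ is continuous and positive definite, $G(x(t))^{-1}$ is continuous, so $\dot x(t) = G(x(t))^{-1}\cdot\big(G(x(t))\dot x(t)\big)$ is a product of a continuous function and a Lipschitz (hence continuous) function, giving $\dot x \in C^0$, i.e. $x \in C^1$. One has to first know $\dot x \in L^\infty$ locally; this follows from the arc-length parametrization $\|\dot x(t)\|_{x(t)} = 1$ a.e. together with local bounds $c\,|v|^2 \le v^T G(x) v \le C\,|v|^2$.

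Once $x \in C^1$, upgrading to $C^{1+\mathrm{Lip}}$ is immediate from the same identity: we showed $p(t) := G(x(t))\dot x(t)$ is Lipschitz, and now $x(t)$ is $C^1$ so $t \mapsto G(x(t))$ is Lipschitz (composition of locally Lipschitz $G$ with $C^1$, hence locally Lipschitz, $x$), hence $t\mapsto G(x(t))^{-1}$ is locally Lipschitz (inversion is smooth on positive definite matrices), and therefore $\dot x(t) = G(x(t))^{-1} p(t)$ is a product of two locally Lipschitz functions, hence locally Lipschitz. That is exactly $x \in C^{1+\mathrm{Lip}}$. Finally I would remark that a \emph{geodesic} (as opposed to a minimizing geodesic) is by definition locally minimizing, so the conclusion, being local, transfers verbatim.

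The main obstacle is justifying the integrated Euler--Lagrange identity when the Lagrangian is only Lipschitz in $x$: one cannot differentiate $G$ pointwise, so the clean classical derivation needs care. I expect this to be handled either via Clarke's generalized gradient (the "force" term lives in the convex hull of limiting gradients, still in $L^\infty$, which is all the argument needs) or, more elementarily, by a smooth mollification $G_\varepsilon \to G$ together with a compactness/lower-semicontinuity argument showing the mollified minimizers converge in $C^1$ to the original minimizer, with uniform Lipschitz bounds on $G_\varepsilon(x_\varepsilon)\dot x_\varepsilon$ passing to the limit. A secondary point requiring attention is the reduction from the length functional to the energy functional in the nonsmooth setting, and confirming that arc-length parametrization is preserved; this is routine but should be stated. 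The rest is the bootstrapping above, which is purely algebraic once the optimality condition is in hand.
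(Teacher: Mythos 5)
Your proposal is correct and takes essentially the same route as the paper: both invoke a nonsmooth Euler--Lagrange necessary condition (the paper uses Clarke's theorem on the Bolza problem, Theorem~\ref{th-2-3}) together with the identity $p(t)=H(x(t))\dot x(t)$ forced by arc-length normalization and smoothness in the velocity variable, and then use local Lipschitz bounds on the metric coefficients and the essential boundedness of $\dot x$ to get a uniform bound on $\dot p$, hence $\dot x = H(x)^{-1}p$ Lipschitz. Your separate $C^1$ intermediate step is harmless but unnecessary, since $\dot x\in L^\infty$ already makes $t\mapsto x(t)$, and hence $t\mapsto H(x(t))^{-1}$, locally Lipschitz from the start.
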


This theorem is proved in \thirdrev{section \ref{sec-2.5}}{Remark 7}, it extends a similar result by Charles Pugh
\cite{pug} who proves the existence of locally minimizing $\mathcal C^{1+ Lip}$ geodesics. 
His argument is based on a smooth approximation of the Lipschitz structure where the classical toolbox of Riemannian
geometry applies, 
followed by a {\it passage \`a la limite}.

Using different techniques we prove here this regularity assumption for all geodesics. 

\rev{An immediate consequence of ~\cite[Cor .VIII-4 p.126]{Brezis} is
that 
$\mathcal C^{1+Lip} = W^{2, \infty}$ the Sobolev space of maps $f$ with $f'' \in L^\infty.$}{mi04}

\subsection{Existence of geodesics in a Lipschitz-Riemann structure}\label{sec-2.2}

Existence of minimizing geodesics with given endpoints may be deduced from the Hopf-Rinow Theorem. \rev{Because we cannot assume the smoothness of
geodesics, we refer to Gromov's version of this theorem
\cite[Th.1.10]{gro}.}{ma02}
A metric space $(X,d)$ is a {\bf path metric space} if the distance between each pair of points equals the infimum of
the lengths of curves 
joining the points.

\begin{theorem} \label{th-2-2} If $(X,d)$ is a complete, locally compact path metric space, then
\begin{itemize}
\item Each bounded, closed subset is compact,
\item Each pair of points can be joined by a minimizing geodesic.
\end{itemize}
\end{theorem}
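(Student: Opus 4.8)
The plan is to give the classical Cohn--Vossen / Hopf--Rinow argument in the language of length spaces, so that no differentiable structure is used. Fix a base point $x \in X$ and set
$$\rho(x) = \sup\{\, r \ge 0 \ : \ \bar B(x,r) \text{ is compact}\,\},$$
where $\bar B(x,r) = \{ y \in X : d(x,y) \le r\}$. Local compactness gives $\rho(x) > 0$, and the inclusion $\bar B(x,r) \subseteq \bar B(y, r + d(x,y))$ yields $|\rho(x) - \rho(y)| \le d(x,y)$; hence $\rho$ is either finite everywhere or identically $+\infty$ (a path metric space with finite metric is connected).

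First I would prove that $R := \rho(x) < \infty$ is impossible. Assuming it, I claim $\bar B(x,R)$ is already compact. Since $X$ is complete this reduces to total boundedness: given $\delta > 0$, the ball $\bar B(x, R - \delta)$ is compact, hence covered by finitely many $B(y_1,\delta), \dots, B(y_k,\delta)$. If $d(x,z) \le R$, the path metric hypothesis supplies a curve from $x$ to $z$ of length $< d(x,z) + \delta \le R + \delta$; taking the point $w$ on it at arc-length parameter $\min(R-\delta, \text{length})$ gives $d(x,w) \le R-\delta$ and $d(w,z) < 2\delta$, so $z$ lies in some $B(y_i, 3\delta)$. As $\delta$ is arbitrary, $\bar B(x,R)$ is totally bounded, hence compact. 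Covering the compact set $\bar B(x,R)$ by relatively compact neighbourhoods and invoking a Lebesgue number, one then finds $\delta' > 0$ with $\bar B(x, R + \delta')$ compact, contradicting the definition of $R$. Therefore $\rho \equiv +\infty$: every closed ball in $X$ is compact. The first assertion follows at once, since a bounded closed set lies in a closed ball and closed subsets of compact sets are compact.

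For the second assertion, fix $p,q \in X$ and let $\gamma_n : [0,1] \to X$ be absolutely continuous paths from $p$ to $q$, each of constant speed, with $L(\gamma_n) \to d(p,q)$. For large $n$ they lie in the compact ball $\bar B(p, d(p,q)+1)$ and are uniformly Lipschitz, so Arzel\`a--Ascoli gives a subsequence converging uniformly to a Lipschitz path $\gamma$ from $p$ to $q$. Since length is lower semicontinuous under uniform convergence, $L(\gamma) \le \liminf_n L(\gamma_n) = d(p,q) \le L(\gamma)$, so $\gamma$ is a minimizing path; reparametrizing it by arc length (a $1$-Lipschitz, hence absolutely continuous, change of variable) produces the desired minimizing geodesic.

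The crux is the compactness of $\bar B(x,R)$ when $R = \rho(x) < \infty$: this is precisely the point where the path metric hypothesis is indispensable (a general metric space has no such property), and it requires the length-versus-distance bookkeeping used to locate $w$. Everything else -- stability of ``closed ball is compact'' under taking suprema, the local-compactness extension, and the Arzel\`a--Ascoli plus lower-semicontinuity step -- is routine once that is established.
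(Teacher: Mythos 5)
Your proof is correct, but there is no proof in the paper to compare it against: the authors explicitly defer to Gromov and quote the result as \cite[Th.\ 1.10]{gro} without argument, precisely because they want a version of Hopf--Rinow that works without a smooth exponential map. What you have written is the standard Cohn--Vossen / Hopf--Rinow argument for complete locally compact length spaces, which is essentially Gromov's, so in substance you have reconstructed the cited proof rather than the paper's own.

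Two small remarks on the write-up. First, the ``crux'' is not quite where you placed it: the step you call ``the local-compactness extension'' (from $\bar B(x,R)$ compact to $\bar B(x,R+\delta')$ compact via a Lebesgue number) also relies on the path metric hypothesis, for exactly the same reason as the totally-bounded step --- you need to pull a point $z$ with $d(x,z)\le R+\delta'$ back to within $2\delta'$ of $\bar B(x,R)$ along a near-minimizing path, and without the length-space axiom there is nothing guaranteeing that $\bar B(x,R+\delta')$ lies in the union of the relatively compact neighbourhoods (complete, locally compact, non-length counterexamples exist). So the theorem really uses the path metric property twice, not once. Second, ``absolutely continuous path'' is not the most natural notion in a bare metric space; the cleaner statement is to take rectifiable curves reparametrized by constant speed, which are Lipschitz, and then your Arzel\`a--Ascoli plus lower-semicontinuity-of-length argument goes through verbatim. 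Neither point is a gap --- the proof is sound --- but both are worth tightening if you intend this as a self-contained substitute for the citation.
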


Two examples of such spaces are given by Boito-Dedieu \cite{boi} for linear maps ($X$ is one of the 
connected components of $\GL$ equipped with the condition structure), and by Shub \cite{shu} when $X$ is
the solution variety associated with the homogeneous polynomial system solving problem equipped with the corresponding
condition structure. 
\subsection{Lipschitz-Riemann structures in $\R^k$, generalized gradients
and the problem of Bolza} \label{sec-2.3}
\label{sec-2.4}
\rev{}{ma04}

An important example of Lipschitz-Riemann structure is given by an open set $\Om \subset \R^k$ equipped with the scalar
product
$$\langle u , v \rangle_x = v^T H(x) u$$
where $H$ is a locally Lipschitz map \thirdrev{from $\Om$ into the set of positive definite $n \times n$ matrices.}{Remark 8} 

A minimizing geodesic  $x(t) \in \Om$, $a \le t \le b$, minimizes the integral 
$$\int_a^b\sqrt{\dot y(t)^T H(y(t)) \dot y(t)} dt$$ 
in the set of absolutely continuous paths $y(t)$ with endpoints $y(a) = x(a)$, and $y(b) = x(b)$. This is an instance of
the Bolza problem. 

For a smooth integrand $L$, a local solution $x(t)$ of the Bolza problem 
\rev{
\begin{equation}\label{bolza-problem}
\inf \int_a^b L(y(t),\dot y(t)) dt,
\end{equation}
}{mi07}
where the infimum is taken in the set of a.c. paths with given endpoints, satisfies the Euler-Lagrange differential
equation
\thirdrev{
\begin{equation}\label{euler-lagrange}
- \frac{d}{dt}\frac{\partial L}{\partial \dot x}(x(t),\dot x(t)) + \frac{\partial L}{\partial  x}(x(t),\dot x(t)) = 0
\ \ a.e.\end{equation}}{Remark 9}
In our context, \rev{it is possible to differentiate $L(x, \dot x) = \sqrt{\dot x^T H(x) \dot x}$ with respect to the second argument by ordinary
differential calculus:
\[
\frac{\partial}{\partial \dot x} 
L(x, \dot x): y \mapsto 
\frac{1}{L(x, \dot x)} \dot x^T H(x) y
.
\]
If we avoid $\dot x = 0$ (which will be the case), 
we deduce that $L$ 
is smooth in the variable $\dot x$
and locally Lipschitz in the variable $x$.}{mi05} 
For this reason
\rev{we replace the classical geodesic differential equation 
by a generalized
version of the Euler-Lagrange equation \thirdrev{\eqref{euler-lagrange}}{Remark 8} based on generalized gradients.}{ma02}

Let $f : \Om \subset \R^k \ra \R$ be a locally Lipschitz function defined on an open set. Its {\bf one-sided directional
derivative} at $x \in \Om$ in the direction $d \in \R^k$ is defined as
$$f'(x,d) = \lim_{t \ra 0_+} \frac{f(x+td)-f(x)}{t}.$$
The {\bf generalized directional derivative in Clarke's sense} of $f$ at $x \in \Om$ in the direction $d$ is defined as
$$f^o(x,d) = \limsup_{
{\scriptsize \begin{array}[pos]{l}
	y \ra x\\
	t \ra 0_+
\end{array}
}} \frac{f(y+td)-f(y)}{t}$$
and the {\bf generalized gradient} of $f$ at $x$ is the nonempty compact subset of $\R^k$ given by
$$\partial f(x) = \left\{s\in \R^k \ : \ \left\langle s,d \right\rangle \leq f^o(x,d) \ {\rm for} \ {\rm all} \ d\in
\R^k \right\}.$$
\rev{It turns out that the generalized gradient is always a convex set.}{mi06}
When $f \in \mathcal C^1(\Om)$ the generalized gradient is just the usual one: 
\secrev{$\partial f(x) = \left\{\nabla f(x)\right\}.$}{JP4,C11}
The generalized directional derivative is related to the gradient via the equality
$$
f^o(x,d) = \max_{s \in \partial f(x)} \left\langle s,d \right\rangle .
$$
We say that $f$ is {\bf regular at }$x$ when the two directional derivatives exist and are equal:
$$f^o(x,d) = f'(x,d) \mbox{ for any } d \in \R^k.$$

When $f$ is defined on a $\mathcal C^1$ manifold $\CM$, we say that $f$ is {\bf regular at }$m \in \CM$ when its composition with
a local chart at $m$
gives a regular map in the usual meaning. 

Good references for this topic is Clarke \cite{cla} or Schirotzek \cite{sch}.

For the problem of Bolza described above
the counterpart of the Euler-Lagrange 
equation is given by the following result (see \cite{cla} Theorem 4.4.3, and \cite{cla-a}).

\begin{theorem} \label{th-2-3} Let $x$ solve the Bolza problem
\rev{\eqref{bolza-problem}}{mi07} 
in the case in which $L(x,\dot x)$ is a locally Lipschitz
map 
and suppose that $\dot x$ is essentially bounded. Then there is an absolutely continuous map $p$ such that
$$\dot p(t) \in \partial_x L(x(t), \dot x(t)) \mbox{ and } p(t) \in \partial_{\dot x} L(x(t), \dot x(t)) \ \ a. e.$$
\end{theorem}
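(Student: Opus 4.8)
\emph{Proof strategy.} This is the nonsmooth (du~Bois-Reymond) form of the Euler--Lagrange equation, and the plan is to derive it by combining the variational inequality satisfied by a minimizer with Clarke's calculus for generalized gradients of integral functionals. First I would place the problem in the Banach space $W^{1,\infty}([a,b],\R^k)$ and set $J(y)=\int_a^b L(y(t),\dot y(t))\,dt$. Since $\dot x$ is essentially bounded, the curve $t\mapsto(x(t),\dot x(t))$ stays in a compact tube on which $L$ is Lipschitz with a single constant; this makes $J$ finite and locally Lipschitz near $x$, and it turns the minimality of $x$ among a.c.\ paths with the given endpoints into the assertion that $x$ minimizes $J$ over the affine subspace $x+X_0$, where $X_0$ is the set of $W^{1,\infty}$ maps vanishing at $a$ and $b$. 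Hence $J^o(x;v)\ge 0$ for every $v\in X_0$, and applying this to $v$ and to $-v$ gives a genuine equality in the next step.

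The key ingredient is Clarke's theorem on generalized gradients of integral functionals: writing $J=\Phi\circ\Lambda$ with $\Lambda(y)=(y,\dot y)\in L^\infty\times L^\infty$ and $\Phi(u,w)=\int_a^b L(u(t),w(t))\,dt$, the chain rule for a continuous linear map together with the integral-functional formula gives that every subgradient of $J$ at $x$ acts on $v\in X_0$ by
\[
v\longmapsto \int_a^b\bigl(\zeta(t)^T v(t)+\eta(t)^T\dot v(t)\bigr)\,dt
\]
for some essentially bounded measurable selection $(\zeta(t),\eta(t))\in\partial L(x(t),\dot x(t))$ valid a.e.\ (the essential boundedness again coming from the uniform Lipschitz bound on the tube). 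Feeding in $J^o(x;v)\ge 0$ for all $v\in X_0$ produces such a selection with $\int_a^b(\zeta^T v+\eta^T\dot v)\,dt=0$ for every $v\in X_0$. Now I would run the classical du~Bois-Reymond argument: put $W(t)=\int_a^t\zeta(s)\,ds$, integrate the first term by parts (the boundary terms vanish because $v$ is zero at the endpoints), and invoke the fundamental lemma of the calculus of variations, namely that an $L^1$ function annihilating every mean-zero $L^\infty$ function is a.e.\ constant. This yields a constant $c$ with $\eta(t)=c+\int_a^t\zeta(s)\,ds$ a.e. Setting $p(t):=c+\int_a^t\zeta(s)\,ds$ gives an absolutely continuous $p$ with $\dot p(t)=\zeta(t)$ and $p(t)=\eta(t)$ a.e., and finally the projection property $\partial L(x,\dot x)\subseteq\partial_x L(x,\dot x)\times\partial_{\dot x}L(x,\dot x)$ upgrades the componentwise membership to the two desired inclusions.

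The main obstacle is the integral-functional step: bounding $\partial J$ in terms of the pointwise generalized gradients of $L$ requires measurable-selection machinery and a uniform Lipschitz constant on a neighborhood of the graph of $(x,\dot x)$, which is exactly where essential boundedness of $\dot x$ enters and cannot be dropped; the chain rule for generalized gradients and the du~Bois-Reymond lemma are then routine. In fact all of this is Theorem~4.4.3 of~\cite{cla} (see also~\cite{cla-a}), so in the paper I would simply cite it, but the argument above is how that proof proceeds.
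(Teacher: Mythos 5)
The paper does not prove this statement: it records it as a known result and refers the reader to Theorem~4.4.3 of Clarke's monograph~\cite{cla} and to~\cite{cla-a}, exactly as you note in your last sentence. So there is no ``paper proof'' to compare against; your reconstruction is a faithful sketch of the standard nonsmooth du~Bois-Reymond argument, and the overall structure (place $J$ on $W^{1,\infty}$, bound $\partial J$ by pointwise selections from $\partial L$ via the integral-functional theorem, integrate by parts, invoke the du~Bois-Reymond lemma, define $p$ as the resulting antiderivative, and split via $\partial L\subseteq \partial_x L\times\partial_{\dot x}L$) is indeed how Clarke proceeds.

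One small imprecision worth tightening: the step ``$J^o(x;v)\ge 0$ for all $v\in X_0$, and applying this to $v$ and to $-v$ gives a genuine equality'' is not quite right as stated. Nonnegativity of $J^o(x;\pm v)$ does not force $J^o(x;v)=0$, since $J^o(x;\cdot)$ is only sublinear. What you actually need is that $x$ is a local minimizer of $J$ restricted to the affine subspace $x+X_0$, so $0\in\partial\bigl(J|_{x+X_0}\bigr)(x)$; the chain rule for the affine inclusion $\iota\colon X_0\to W^{1,\infty}$, $v\mapsto x+v$, then yields a \emph{single} $\xi\in\partial J(x)$ that annihilates $X_0$, and it is this $\xi$ to which the integral-functional formula is applied to produce the one selection $(\zeta,\eta)$ satisfying $\int_a^b(\zeta^T v+\eta^T\dot v)\,dt=0$ for every $v\in X_0$. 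With that corrected, the rest of the outline (the du~Bois-Reymond lemma and the definition of $p$) goes through as you describe.
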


\subsection{Proof of Theorem \ref{the-2.1}}\label{sec-2.5}
\label{sec-2.6}
\rev{Since Theorem~\ref{the-2.1} is of local nature, it suffices to prove it 
locally in $\R^k$. Once this is done, take a local chart and transfer the
Lipschitz-Riemann structure of $\CM$ 
to an open set $\Om \subset \R^k$ where the theorem is already proved.
Therefore, let us show the theorem in $\R^k$.}{ma04}

By definition, a geodesic is a locally minimizing geodesic.
Thus, it suffices to establish the theorem in this case.

A minimizing geodesic  $x(t) \in \Om$, $a \le t \le b$, is parametrized by arc length so that
$$\dot x(t)^T H(x(t)) \dot x(t) = 1 \ \ a.e.,$$
Thus, $\dot x(t)$ is $\ne 0$ and essentially bounded:
$$\dot x \in L^\infty \left( [a,b], \R^k \right).$$
Moreover, $x$ minimizes the integral 
$$\int_a^b \sqrt{\dot y(t)^T H(y(t)) \dot y(t)} dt$$ 
in the set of absolutely continuous paths with endpoints $y(a) = x(a)$, and $y(b) = x(b)$. Thus, according to Theorem
\ref{th-2-3},
there is an absolutely continuous arc $p$ such that
\rev{
\begin{eqnarray}
\label{eq-1-of-page-8}
\dot p(t) &\in& \partial_x \sqrt{\dot x(t)^T H(x(t)) \dot x(t)}, \\
\label{eq-2-of-page-8}
p(t) &\in& \partial_{\dot x} \sqrt{ \dot x(t)^T H(x(t)) \dot x(t)}
\end{eqnarray}}{mi08}
for almost all $t \in [a,b]$. Since our integrand is smooth in the $\dot x$ variable we may write \rev{\eqref{eq-2-of-page-8}}{mi08}
$$p(t) = \frac{H(x(t)) \dot x(t)}{\sqrt{ \dot x(t)^T H(x(t)) \dot x(t)}} = H(x(t)) \dot x(t).$$
Thus, $\dot x(t) = H(x(t))^{-1}p(t)$ is absolutely continuous and $x(t)$ possesses a.e. a second derivative $\ddot x(t)
\in L^1 ([a,b], \R^k).$

We now have to show that this second derivative is essentially bounded. 
This comes from \rev{\eqref{eq-1-of-page-8}}{mi08}.
Since $\sqrt{\cdot}$ is a smooth function we get
\secrev{from Proposition 2.3.3 and Theorem 2.3.9 of Clarke's book
~\cite{cla} that}{C12}
$$\partial_x \sqrt{ \dot x^T H(x) \dot x } \subset 
\frac{ \dot x^T \partial H(x) \dot x}{2\sqrt{\dot x^T H(x) \dot x}} = \frac{1}{2} \dot x^T \partial H(x) \dot x,$$
with
$$\dot x^T \partial H(x) \dot x = \sum_{i,j} \dot x_i \dot x_j \partial h_{ij}(x).$$
\rev{Equation \eqref{eq-1-of-page-8}}{mi08}
implies
$$\dot p(t) \in \frac{1}{2} \dot x(t)^T \partial H(x(t)) \dot x(t)\text{ a.e.}$$
From the hypothesis, the functions $h_{ij}(x)$ are locally Lipschitz. Their generalized gradients are compact convex
sets 
in $\R^k$. The union of all these sets along the path $x(t)$ gives us a bounded set. Since the curve $\dot x(t)$ is
continuous, we
deduce from these considerations, that $\dot p(t)$ is bounded a.e. Thus $p(t)$ is Lipschitz,
and $\dot x(t)
=H(x(t))^{-1}p(t)$ is also Lipschitz. The second derivative $\ddot x(t)$ is thus bounded by the Lipschitz constant of
$\dot x(t)$,
and we are done.

\begin{remark} \label{rem-2-1} The previous lines give the following properties for a geodesic $x$ in $\Om$: 
$x \in \mathcal C^{1+Lip}$, $\dot x^T H(x) \dot x = 1$, and
$$\frac{d}{dt}(H(x) \dot x) \in  \frac{1}{2}\dot x^T \partial H(x) \dot x = \frac{1}{2}\sum_{i,j} \dot x_i \dot x_j
\partial h_{ij}(x)\text{ a.e.}$$
The initial value problem, and even the boundary value problem associated with this second order differential inclusion,
may have many solutions. 
Examples are given in \cite{boi}.
Moreover, solutions are not necessarily locally minimizing geodesics and
geodesics are not necessarily unique. 
\end{remark}

\subsection{Conformal Lipschitz-Riemann structure}\label{sec-2.7}

The example of a \rev{Lipschitz-Riemann}{mi09} structure which motivates this paper is given by the condition structure on $\GL$. 
It is obtained in multiplying the Frobenius scalar product by the locally Lipschitz function $\si_n^{-2}$.
Let us put it in a more general setting. 

\rev{
\begin{definition}\label{def-3-MODIFIED}
Let\/ $(\CM,\langle\cdot,\cdot\rangle)$ be 
a $\mathcal C^2$ Riemannian manifold, and let
$\al : \CM \rightarrow \R$ be a locally Lipschitz function with
positive values. Let $\CM_\ka$ be the manifold $\CM$ with the new
metric\[ \langle\cdot,\cdot\rangle_{\ka , x} = \al
(x)\langle\cdot,\cdot\rangle_x
\]
\thirdrev{called $\alpha$-Riemann structure. When $\alpha$ is the square of the (unscaled) condition number, i.e. $\alpha(A)=\|A^{\dagger}\|_2^2=\sigma_n^{-2}$, this is also called the {\bf condition Riemann structure} 
or simply the {\bf condition structure}.}{Remark 10}
We say that $\al$ is {\bf self-convex}
when\/ $\log \al(\ga (t))$ is convex for any geodesic $\ga$ in~$\CM_\ka$.
\end{definition}
}{mi10}

We denote by $L$ (respectively
$L_\ka$) the length of a curve $\ga$ in the $\CM$-structure (respectively in the $\CM_\ka$-structure). We will speak of
{\bf length} or {\bf condition length}, and also of {\bf distance} or 
{\bf condition distance}, {\bf geodesics} or {\bf condition geodesics} and so on.

Examples of self-convex maps are given in \cite{BDMS} where this concept is introduced for the first time.

Using this definition Theorem \ref{th-1} above reads 
$$\alpha(A)=\sigma_n(A)^{-2} \mbox{ is self-convex in } \GL .$$
\section{ Self-convexity in the smooth case and the computation of Hessians.}

\subsection{Self-convexity in the smooth case}

\rev{Self convexity in the smooth case was studied in our previous
paper~\cite{BDMS} in this journal. We
refer the reader to Section 2 
of~\cite{BDMS} for basic definitions regarding convexity and geodesic
convexity.
A snapshot of the main features of self-convexity in the smooth case
follows.}{ma06}
\bigskip

\rev{We denote by $D$ the Levi-Civita connection and by
 $D_XT$ the {\bf covariant derivative} of a tensor $T$
in the direction given by a vector field $X$. Recall that if we assume geodesic
coordinates in the neighborhood of a point $p$, then $(D_XT)_p$ is
the same as the ordinary directional (or Lie) derivative.
The covariant derivative is coordinate independent, in the sense
that $D_XT$ is a tensor.}{S1}

\rev{If $f$ is a \thirdrev{smooth enough}{Remark 11} function, then its derivative with respect
to a vector field is denoted by $X(f)$, so that $X(f)(p) = 
Df(p) X(p) = \langle \nabla f(p), X(p) \rangle_p$.
The second covariant derivative of a function $f$ (sometimes
also known as the {\em Hessian}) is defined by
\begin{equation}\label{intr-hessian}
\hess{f} (X,Y) = 
D(Df)(X,Y) = 
D_X(Y(f)) -(D_XY)(f)
\end{equation}
where $X$ and $Y$ are
smooth vector fields. The operator above is symmetric, in 
the sense that $\hess{f} (X,Y) = \hess{f} (Y,X)$ 
(see e.g. ~\cite[p.305]{Berger})}{mi18}

When $\alpha : \CM \rightarrow \R$ is $\mathcal C^2$, self-convexity of $\alpha$ is equivalent to the \rev{second covariant derivative
of $\log(\alpha)$}{mi12} being positive semi-definite in the $\alpha$-condition Riemann structure (see \cite{udr} Chap. 3,
Theorem 6.2). Note that the second covariant derivative of a map $\CM \ra \R$ is different in $\CM$ and in $\CM_\ka$. We
denote them respectively by \rev{$\hess{}$ or $\hessk{}$}{mi12}.
Self-convexity of $\alpha$ is equivalent to 
\rev{$\hessk{\log(\alpha)}$}{mi12} being positive semi-definite. 

\rev{Proposition~2 of~\cite{BDMS} is 
\begin{proposition}
For a function $\alpha: \CM \rightarrow \mathbb R$ of class $\mathcal C^2$ with
positive values self-convexity is equivalent to
\begin{equation} \label{eq:Hesssmooth}
2\alpha(x)
\rev{\hess{\alpha(x)}}{ma10}
(\dot x,\dot x) + \|D\alpha(x)\|_x^2  \|\dot x\|_x^2 - 4(D\alpha(x)\dot x)^2 \geq 0
\end{equation}
for \thirdrev{all}{Remark 12} $x\in \CM$ and for \thirdrev{all}{Remark 12} vector $\x \in T_x\CM$, the tangent space at $x$.
\end{proposition}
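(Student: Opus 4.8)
The plan is to reduce the statement to the pointwise criterion for geodesic convexity of a $C^2$ function, and then to carry out a conformal change-of-metric computation. Recall first that a $C^2$ function $g$ on a Riemannian manifold is convex along every geodesic if and only if its geodesic Hessian is positive semidefinite everywhere: along a $\CM_\ka$-geodesic $\ga$ one has $\frac{d^2}{dt^2} g(\ga(t)) = \hessk{g}(\dot\ga,\dot\ga) + (D_\ka\,_{\dot\ga}\dot\ga)(g) = \hessk{g}(\dot\ga,\dot\ga)$ because $\ga$ is a geodesic in $\CM_\ka$; conversely, through any point and in any direction there is a $\CM_\ka$-geodesic, so the pointwise inequality is necessary as well. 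This is exactly the equivalence we quote from \cite{udr}, applied to $g=\log\al$. Hence self-convexity of $\al$ is equivalent to $\hessk{\log\al}(\x,\x)\ge 0$ for every $x\in\CM$ and every $\x\in T_x\CM$, and the whole proposition amounts to rewriting this inequality.

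For the rewriting, write $\al = e^{2\ph}$ with $\ph = \tfrac12\log\al$, so that $\CM_\ka$ carries the conformally rescaled metric $\langle\cdot,\cdot\rangle_{\ka,x} = e^{2\ph}\langle\cdot,\cdot\rangle_x$. The Levi-Civita connection $D_\ka$ of $\CM_\ka$ is related to that of $\CM$ by the classical conformal formula
\begin{equation*}
D_{\ka,X}Y = D_X Y + (X\ph)\,Y + (Y\ph)\,X - \langle X,Y\rangle_x\,\nabla\ph,
\end{equation*}
where $\nabla\ph$ is the $\CM$-gradient of $\ph$. Inserting this into $\hessk{g}(X,Y) = X(Y(g)) - (D_{\ka,X}Y)(g)$ gives, for any $C^2$ function $g$,
\begin{equation*}
\hessk{g}(X,Y) = \hess{g}(X,Y) - (X\ph)(Y(g)) - (Y\ph)(X(g)) + \langle X,Y\rangle_x\,\langle\nabla\ph,\nabla g\rangle_x.
\end{equation*}

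Finally I would specialize this with $g=\log\al=2\ph$ and $X=Y=\x$. Using the standard identity $\hess{\log\al} = \al^{-1}\hess{\al} - \al^{-2}\,D\al\otimes D\al$, hence $\hess{\log\al}(\x,\x) = \al^{-1}\hess{\al(x)}(\x,\x) - \al^{-2}(D\al(x)\x)^2$, together with $\x(\ph) = \tfrac{1}{2\al}D\al(x)\x$, $\x(\log\al) = \tfrac{1}{\al}D\al(x)\x$, $\nabla\ph = \tfrac{1}{2\al}\nabla\al$, $\nabla\log\al = \tfrac{1}{\al}\nabla\al$ and $\|\nabla\al\|_x = \|D\al(x)\|_x$, a short simplification yields
\begin{equation*}
\hessk{\log\al}(\x,\x) = \frac{1}{2\al(x)^2}\Big(2\al(x)\,\hess{\al(x)}(\x,\x) + \|D\al(x)\|_x^2\,\|\x\|_x^2 - 4\,(D\al(x)\x)^2\Big).
\end{equation*}
Since $\al>0$, the bracketed quantity is nonnegative for all $x$ and $\x$ exactly when $\hessk{\log\al}$ is positive semidefinite, which by the first paragraph is equivalent to self-convexity of $\al$; this is the claim. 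The only delicate point is bookkeeping: one must apply the conformal transformation law for the connection correctly and stay consistent about which metric each gradient, norm, and inner product belongs to — on the right-hand side all the norms $\|\cdot\|_x$ and the gradient $\nabla$ refer to the original structure of $\CM$, not to $\CM_\ka$. There is no genuine obstacle; if one prefers not to quote the conformal connection formula, the same identity follows by differentiating $t\mapsto\log\al(\ga(t))$ twice along a $\CM_\ka$-geodesic $\ga$ and using the geodesic equation together with the first variation of the $\CM_\ka$-speed $\|\dot\ga\|_\ka$.
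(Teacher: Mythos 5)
Your derivation is correct. Let me record, though, that the paper you were asked to compare against does not actually prove this proposition: it is quoted verbatim as Proposition~2 of the authors' earlier work \cite{BDMS}, and the only argument given in the present text is the remark (attributed to \cite{udr}, Chap.~3, Thm.~6.2) that self-convexity of a $C^2$ function $\alpha$ is equivalent to positive semidefiniteness of $D^2_\kappa \log\alpha$ in the condition metric. That remark is precisely your first paragraph, so you are in step with the paper's framing.

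Your second step — passing through the conformal transformation law
\[
D_{\kappa,X}Y = D_XY + (X\varphi)\,Y + (Y\varphi)\,X - \langle X,Y\rangle\,\nabla\varphi, \qquad \varphi=\tfrac12\log\alpha,
\]
and then simplifying — is the natural way to turn the geometric criterion into the stated inequality, and I checked the algebra: combining $\hess{\log\alpha}(\dot x,\dot x)=\alpha^{-1}\hess{\alpha}(\dot x,\dot x)-\alpha^{-2}(D\alpha\,\dot x)^2$, the cross term $-2(\dot x\varphi)(\dot x\log\alpha)=-\alpha^{-2}(D\alpha\,\dot x)^2$, and $\langle\nabla\varphi,\nabla\log\alpha\rangle=\tfrac{1}{2\alpha^2}\|D\alpha\|^2$ gives
\[
\hessk{\log\alpha}(\dot x,\dot x) = \frac{1}{2\alpha(x)^2}\Bigl(2\alpha(x)\hess{\alpha(x)}(\dot x,\dot x) + \|D\alpha(x)\|_x^2\|\dot x\|_x^2 - 4(D\alpha(x)\dot x)^2\Bigr),
\]
exactly the bracketed expression times a positive factor. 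One small remark worth keeping in mind: the reduction in your first paragraph tacitly uses that for every $x$ and $\dot x$ there is a $\CM_\kappa$-geodesic through $x$ with velocity $\dot x$, which is fine here because you have already assumed $\alpha$ is $C^2$ so the conformal metric is $C^2$ and the geodesic equation has local solutions; this assumption is doing real work and would fail in the Lipschitz setting the paper is ultimately concerned with, which is why the paper restricts this proposition to the smooth pieces $\CP_{(k)}$.
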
}{ma06}

\subsection{Self-convexity in a product space}

\rev{Proposition~2 of~\cite{BDMS}}{mi11}
has an immediate corollary which can be useful. Suppose $\CN$ is another
\secrev{$\mathcal C^2$}{C15} Riemannian manifold. Give $\CM \times \CN$ the product 
\rev{metric}{mi13}. Let $\pi: 
\CM \times \CN \rightarrow \CM$ be the projection on the first factor and
$\hat \alpha: \CM \times \CN \rightarrow \mathbb R$ be the composition
$\hat \alpha = \alpha \circ \pi$.

\begin{proposition}\label{product}
\rev{Let $\alpha$ be of class $\mathcal C^2$ in $\CM$. Then,
$\alpha$ is self-convex in $\CM$ if and only if $\hat \alpha$ is 
self-convex in $\CM \times \CN$.}{mi14}
\end{proposition}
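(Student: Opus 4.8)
The plan is to bypass geodesics entirely and argue through the pointwise criterion of Proposition~2 of~\cite{BDMS}, namely inequality~\eqref{eq:Hesssmooth}, which characterizes self-convexity of a positive $C^2$ function purely in terms of the underlying (non-condition) Riemannian metric. Thus it suffices to show that $\hat\alpha$ satisfies \eqref{eq:Hesssmooth} on $\CM\times\CN$ if and only if $\alpha$ satisfies it on $\CM$, and for this I would first record how the quantities appearing in \eqref{eq:Hesssmooth} transform under the projection $\pi$.

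Equip $\CM\times\CN$ with the product metric, so that $T_{(x,y)}(\CM\times\CN)=T_x\CM\oplus T_y\CN$ is an orthogonal direct sum, $\|(\dot x,\dot y)\|_{(x,y)}^2=\|\dot x\|_x^2+\|\dot y\|_y^2$, and the Levi-Civita connection of $\CM\times\CN$ is the product of the two Levi-Civita connections. Take geodesic coordinates at $(x,y)$ that are a product of geodesic coordinates at $x$ in $\CM$ and at $y$ in $\CN$; since $\hat\alpha=\alpha\circ\pi$ does not depend on the $\CN$-variables, in such coordinates its ordinary first and second derivatives — which, by the remark following~\eqref{intr-hessian}, compute $D\hat\alpha$ and $\hess{\hat\alpha}$ at $(x,y)$ — involve only the $\CM$-block. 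Hence
\[
D\hat\alpha(x,y)(\dot x,\dot y)=D\alpha(x)\dot x,\qquad
\|D\hat\alpha(x,y)\|_{(x,y)}^2=\|D\alpha(x)\|_x^2,\qquad
\hess{\hat\alpha(x,y)}\big((\dot x,\dot y),(\dot x,\dot y)\big)=\hess{\alpha(x)}(\dot x,\dot x),
\]
together with $\hat\alpha(x,y)=\alpha(x)$. Substituting into the left-hand side of \eqref{eq:Hesssmooth} for $\hat\alpha$ at $(x,y)$ in the direction $(\dot x,\dot y)$ gives
\[
2\alpha(x)\,\hess{\alpha(x)}(\dot x,\dot x)+\|D\alpha(x)\|_x^2\big(\|\dot x\|_x^2+\|\dot y\|_y^2\big)-4\big(D\alpha(x)\dot x\big)^2 .
\]
If $\alpha$ is self-convex on $\CM$, the same expression with $\dot y=0$ is $\ge 0$ by \eqref{eq:Hesssmooth}, and adding the nonnegative term $\|D\alpha(x)\|_x^2\|\dot y\|_y^2$ preserves the inequality; so $\hat\alpha$ satisfies \eqref{eq:Hesssmooth} and is self-convex on $\CM\times\CN$. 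Conversely, if $\hat\alpha$ is self-convex, specialising to $\dot y=0$ yields exactly the left-hand side of \eqref{eq:Hesssmooth} for $\alpha$ at $x$ in direction $\dot x$, so $\alpha$ is self-convex on $\CM$. The $C^2$ and positivity hypotheses needed to apply Proposition~2 of~\cite{BDMS} to $\hat\alpha$ hold because they hold for $\alpha$.

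The only step that is more than bookkeeping is the identity $\hess{\hat\alpha}=\pi^*\hess{\alpha}$, i.e.\ that the product connection introduces no cross terms and that differentiation in $\CN$-directions contributes nothing. I would justify it either by the product-coordinate computation above, or invariantly from~\eqref{intr-hessian}: for lifted vector fields $(X_1,X_2),(Y_1,Y_2)$ on $\CM\times\CN$ one has $D_{(X_1,X_2)}(Y_1,Y_2)=(D_{X_1}Y_1,D_{X_2}Y_2)$ while $(Y_1,Y_2)(\hat\alpha)=\big(Y_1(\alpha)\big)\circ\pi$, and feeding these into $\hess{\hat\alpha}\big((X_1,X_2),(Y_1,Y_2)\big)=D_{(X_1,X_2)}\big((Y_1,Y_2)(\hat\alpha)\big)-\big(D_{(X_1,X_2)}(Y_1,Y_2)\big)(\hat\alpha)$ collapses it to $\big(\hess{\alpha}(X_1,Y_1)\big)\circ\pi$. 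I expect no genuine obstacle beyond getting this product-connection computation right.
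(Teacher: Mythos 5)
Your argument and the paper's are essentially the same: both place product normal coordinates around $(x,y)$, observe that $\hess{\hat\alpha}$ has a block structure with the $\CM$-block equal to $\hess{\alpha}$ and all other blocks zero, and then conclude via the criterion \eqref{eq:Hesssmooth}. Your writeup is a bit more explicit at the last step, spelling out that \eqref{eq:Hesssmooth} for $\hat\alpha$ equals \eqref{eq:Hesssmooth} for $\alpha$ plus the manifestly nonnegative term $\|D\alpha(x)\|_x^2\|\dot y\|_y^2$, which cleanly gives both directions; the paper compresses this into a remark about the block structure, so your version is a welcome clarification rather than a different route.
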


\rev{We thank an 
anonymous referee for pointing out the
{\em if} part of this Proposition and simplifying the proof.}{mi14-15}

\begin{proof}
\rev{We prove first the only if part.}{mi14}
Let $(x, y) \in \CM \times \CN$ and assume normal (geodesic) coordinates
in a neighborhood of $x \in \CM$. Also, assume normal coordinates around 
$y \in \CN$ with
respect to the inner product 
$\langle \cdot , \cdot \rangle_{\CN}$. 

We claim that this defines a system of normal coordinates in
$\CM \times \CN$. 
\rev{This can be seen from the fact that the exponential map
in a product manifold $\CM \times \CN$ is the partitionning of the exponential
mappings of $\CM$ and $\CN$. However, we give a direct proof below.}{mi15}

\secrev{Let $g_{ij}$ and $\Gamma_{ij}^k$ denote respectively the
coefficients of the first fundamental form $\langle \cdot , \cdot \rangle_{x,y}$
and the Christoffel symbols.}{C16}
By construction, 
$g_{ij}(x,y) = \delta_{ij}$. Also, it is easy to see
that for all indexes $i,j,k$,
\[
\Gamma_{ij}^k (x,y) = 0 \ .
\]

Indeed, if indices $(i,j,k)$ correspond to the same 
component $\CM$ or $\CN$ this follows from the choice of
normal coordinates in each component. Otherwise, say that
$i, j$ correspond to coordinates in  $\CM$ and $k$ to
coordinates in  $\CN$. Then $g_{ik} \equiv g_{jk} \equiv 0$
and furthermore,
\[
\frac{\partial}{\partial u^k} g_{ij} (x,y) = 0 .
\]
Thus $\Gamma_{ikj} (x,y) = 0$ for all indexes $i,j,k$. This implies
that $\Gamma_{ij}^k (x,y) = 0$ as well. Thus we have a normal system
of coordinates around $(x, y) \in \CM \times \CN$.
\medskip

In that system of coordinates,
\rev{
\[
D^2_{\CM \times \CN} \hat \al(x,y) =
\left[
\begin{matrix}
D^2_{\CN} \al(x) & 0 \\
0 & 0 
\end{matrix}
\right]
\]
}{mi16}
\rev{From the block structure of the second covariant derivative above,
it is clear that 
$D^2_{\CM \times \CN} \hat \al(x,y)$ is
\secrev{}{C17} positive \thirdrev{semi-definite}{Remark 13} if and only if 
$D^2_{\CN} \al(x)$ is positive definite.}{mi15}
%

\end{proof}

We have raised the question in the introduction of whether self-convexity
of the condition number holds for the condition Riemann structure on the 
solution variety considered in~\cite{shu}. The theorems proven in this 
paper apply to the case of linear \secrev{systems}{C18}, but with the use of Proposition~\ref{product}
they give us some information on polynomial systems almost for free.\thirdrev{}{REMOVED: the proposition follows from Propositions 19,9 and Theorem 29. Moved and changed in the proof of Proposition 11}

\medskip

Let $\mathbf d = (d_1, \dots, d_n)$. Consider the vector space
\[
\CP_{\mathbf d, 0} = \{ (f_1, \dots, f_n): f_i \in \mathbb C[x_1, \dots, x_n]
\mbox{ with } \deg f_i = d_i \mbox{ and } f_i(0)=0 \}
.
\]

\secrev{An important point is that self-convexity is well-defined
for Riemannian manifolds. Therefore, \thirdrev{if}{Remark 14} we want to speak of
self-convexity in $\CP_{\mathbf d,0}$, we need to make it into
an inner product vector space.
We will follow~\cite{BCSS} and assume the unitarily invariant metric
in the space of degree $d_i$ polynomials. This is the same as the 
metric for symmetric $d_i$-tensors. Then we define the product
metric for $\CP_{\mathbf d}$ and it is inherited by the subspace
$\CP_{\mathbf d, 0}$. 
In more precise terms: if $f_i(x) = \sum_{1 \le |a| \le d_i} f_{ia} 
x_1^{a_1} x_2^{a_2} \cdots x_n^{a_n}$ and
$g_i(x) = \sum_{1 \le |a| \le d_i} g_{ia} 
x_1^{a_1} x_2^{a_2} \cdots x_n^{a_n}$ then we set
\[
\langle f, g \rangle =
\sum_{i=1}^n
\sum_{1 \le |a| \le d_i}
\frac{ f_{ia} \bar g_{ia} }{\binomial{d_i}{a}}
\]
with
\[
\binomial{d_i}{a} = \frac{d_i!}{a_1! a_2! \dots a_n! (d_i-|a|)!}
.
\]
This vector space splits as $\CP_{\mathbf d, 0} = \CL_0 \oplus \mathrm{(H.O.T.)}_0$ where
$\CL_0$ are linear and $\mathrm{(H.O.T.)}_0$ are higher order polynomials vanishing
at $0$. Those two spaces are orthogonal. 
The inner product for linear polynomials is 
\[
\begin{split}
\langle A x, B x \rangle &= \sum_{i=1}^n \frac{1}{d_i}\sum_{j=1}^n
A_{ij} \bar B_{ij} 
=\\
&=\mathrm{tr} \left( 
\left(
\left[
\begin{matrix}
1/\sqrt{d_1} & & \\
& \ddots & \\
& & 1/\sqrt{d_n}
\end{matrix}
\right] 
B \right)^*
\left(
\left[
\begin{matrix}
1/\sqrt{d_1} & & \\
& \ddots & \\
& & 1/\sqrt{d_n}
\end{matrix}
\right] 
A \right)
\right)
.
\end{split}
\]
The unscaled\thirdrev{\cite[Prop.5 p.228]{BCSS}}{Remark 16}, normalized\thirdrev{\cite[p.233]{BCSS}}{Remark 16} condition number is defined, 
for $f \in \CP_{\mathbf d,0}$,
by
\[
\mu(f,0) =  
\left\| Df(0)^{-1}
\left[
\begin{matrix}
\sqrt{d_1} & & \\
& \ddots & \\
& & \sqrt{d_n}
\end{matrix}
\right] 
\right\|_2
=
\sigma_n^{-1} \left( 
\left[
\begin{matrix}
1/\sqrt{d_1} & & \\
& \ddots & \\
& & 1/\sqrt{d_n}
\end{matrix}
\right] 
Df(0) \right).
\]
\thirdrev{The right-hand term is the (unscaled) condition number for
$\mathcal L_0$. It coincides with the unscaled condition number for
matrices, which is the topic of this paper.}{Remark 15}
\begin{proposition}\label{restricted-case}
$\mu$ is 
self-convex in its domain of definition $\CP_{\mathbf d, 0} \setminus
\Sigma$, where $\Sigma = \{ \mathbf f \in \CP_{\mathbf d,0}: D\mathbf f(0)
\text{ is degenerate}\}$.
\end{proposition}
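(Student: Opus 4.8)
The plan is to reduce Proposition~\ref{restricted-case} to the smooth, stratified self-convexity of $\si_n^{-2}$ for square matrices (Proposition~\ref{prop-2} together with the analysis of Section~4), by exploiting the orthogonal splitting $\CP_{\mathbf d, 0} = \CL_0 \oplus \mathrm{(H.O.T.)}_0$ recorded above; this is the polynomial analogue of Theorem~\ref{th-1}. The starting observation is that $\mu(f, 0)$ depends on $f$ only through its linear part $Df(0) \in \CL_0$. Writing $\pi \colon \CP_{\mathbf d, 0} \to \CL_0$ for the orthogonal projection and $\bar\mu = \mu|_{\CL_0}$, we have $\mu = \bar\mu \circ \pi$, and, since the two summands are orthogonal and $\mu$ ignores the second, the domain of $\mu$ is the metric product
\[
\CP_{\mathbf d, 0} \setminus \Sigma \;=\; \left( \CL_0 \setminus \Sigma_0 \right) \times \mathrm{(H.O.T.)}_0 ,
\qquad
\Sigma_0 = \{\, Ax \in \CL_0 : A \text{ degenerate} \,\} .
\]
As $\mathrm{(H.O.T.)}_0$ is a Euclidean space, this is exactly the situation covered by Proposition~\ref{product}, with $\CN = \mathrm{(H.O.T.)}_0$ and $\hat\alpha = \mu$.

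Next I would identify $\CL_0 \setminus \Sigma_0$ with $\mathbb{GL}_{n,n}$ as condition structures. Put $\Delta = \diag(1/\sqrt{d_1}, \dots, 1/\sqrt{d_n})$. The displayed formula for $\langle\cdot,\cdot\rangle$ on linear polynomials shows that $Ax \mapsto \Delta A$ is a linear isometry of $\CL_0$ onto $(\C^{n \times n}, \langle\cdot,\cdot\rangle_F)$; by the formula for $\mu$ it sends $\bar\mu$ to $\si_n^{-1}$ and $\Sigma_0$ to the set of rank-deficient matrices, so it carries the condition structure of $\CL_0$ — the ambient metric scaled by $\mu^2$ — onto the condition structure $\si_n^{-2}\langle\cdot,\cdot\rangle_F$ of $\mathbb{GL}_{n,n}$. (As usual for the condition number, self-convexity of $\mu$ means geodesic convexity of $\log\mu$ for the structure $\mu^2\langle\cdot,\cdot\rangle$, that is, self-convexity of the conformal factor $\mu^2$ in the sense of Definition~\ref{def-3-MODIFIED}, which on the matrix side becomes self-convexity of $\si_n^{-2}$.) An isometry of $C^2$ Riemannian manifolds maps geodesics to geodesics and preserves geodesic convexity of the logarithm of the conformal factor, so the problem is now precisely to establish self-convexity of $\si_n^{-2}$ on $\mathbb{GL}_{n,n}$ and to transport it across the Euclidean factor.

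To do this I would argue stratum by stratum, as in the proof of Theorem~\ref{th-1}, because $\si_n^{-2}$ is only locally Lipschitz on $\mathbb{GL}_{n,n}$ while Proposition~\ref{product} requires a $C^2$ conformal factor. Decompose $\mathbb{GL}_{n,n}$ into the finitely many smooth submanifolds $\CM_k$ on which $\si_n$ has constant multiplicity; on each $\CM_k$ the function $\si_n^{-2}$ is smooth and self-convex in the restricted condition structure, by Proposition~\ref{prop-2} (the content of Section~4 specialized to $m = n$). Pulling this back through the isometry, and noting that on the submanifold $\widetilde{\CM}_k \times \mathrm{(H.O.T.)}_0 \subset \CP_{\mathbf d, 0} \setminus \Sigma$ the condition metric is smooth and equals the product of the two restricted structures (the ambient metric on $\CP_{\mathbf d,0}$ is the orthogonal sum and $\mu$ depends only on the first factor), the $C^2$ case of Proposition~\ref{product} shows that $\mu$ is self-convex on each $\widetilde{\CM}_k \times \mathrm{(H.O.T.)}_0$. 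Finally these products form a stratification of $\CP_{\mathbf d, 0} \setminus \Sigma$ by submanifolds of varying dimension on which the Lipschitz-Riemann structure is smooth, so Theorem~\ref{the-5.2} pieces the partial convexity statements together into self-convexity of $\mu$ on all of $\CP_{\mathbf d, 0} \setminus \Sigma$.

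The step I expect to be the main obstacle is this last one: one must check that the product stratification $\{\, \widetilde{\CM}_k \times \mathrm{(H.O.T.)}_0 \,\}$ really satisfies the hypotheses of Theorem~\ref{the-5.2} — the correct incidence and closure relations among the strata, together with the per-stratum self-convexity just obtained — while tracking the non-smoothness of $\si_n$ throughout. But this is the same bookkeeping already carried out for Theorem~\ref{th-1}, now merely crossed with a flat Euclidean factor, so I do not expect a genuinely new difficulty to appear here.
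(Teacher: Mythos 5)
Your proof is correct and follows exactly the route the paper indicates (``the proof follows from Propositions~\ref{prop-2}, \ref{product} and Theorem~\ref{the-5.2}''): isometrically identify the linear part with square matrices, apply Proposition~\ref{prop-2} and the $C^2$ case of Proposition~\ref{product} on each stratum $\CP_{(k)} \times \mathrm{(H.O.T.)}_0$, and piece together with Theorem~\ref{the-5.2}. The ``main obstacle'' you flag at the end is not actually one, since Theorem~\ref{the-5.2} imposes no incidence or closure conditions on the strata --- only that they are submanifolds covering $\CM$, that $\alpha$ is regular with $\SD\alpha > -\infty$, and that the restriction to each stratum is $C^2$ and self-convex, all of which you have verified.
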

}{M8 C19}

\begin{proof}
\thirdrev{Immediate from Proposition \ref{product} and Theorem~\ref{th-1}.}{ADDED TEXT}
\end{proof}
\medskip

\subsection{Computation of the Hessian}

When analyzing the convexity properties of $\sigma_n(A)$, we first note that this function \rev{is invariant through unitary changes of coordinates,
namely}{ma08}
$$\sigma_n(A)=\sigma_n(UAV^*)$$ 
for unitary matrices $U \in \U_n$, and $V \in \U_m$ (resp.
orthogonal matrices $U \in \mathbb O_n$, and $V \in \mathbb O_m$). 
Let us consider this situation in a general framework. 

\rev{A {\em Lie group} is a group that is also a smooth manifold,
and such that the group operations (multiplication and inversion)
are smooth. We say that a Lie group $G$ acts (smoothly) on 
a manifold $\CM$ if there is a smooth map $\ell: G \times \CM \rightarrow \CM$
with
\[
\ell( (g_1 g_2) , p) = \ell( g_1 , \ell(g_2, p))
\text{\ and \ }
\ell( 1, p ) = p \ .
\]
In the example above, $G=\U_n \times \U_m$ acts on
$\GL$ by $\ell( (U,V),p) = U p V^*$. 
For simplicity, we may write $g(p)$ for $\ell(g,p)$ and
assimilate $g$ to the mapping $p \rightarrow g(p)= \ell(g,p)$.

\thirdrev{An {\bf isometry} of $\CM$ is a diffeomorphism of $\CM$ that
preserves Riemannian distance.}{Remark 18}
We say that the Lie group $G$ acts by isometries when for all $g$,
the corresponding map $g:p \rightarrow g(p)$ is an isometry of $\CM$.

\begin{definition} Let $\alpha: \CM \rightarrow \mathbb R$. 
A {\bf group of symmetries} of $\alpha$ is a Lie group, acting
smoothly by isometries on $\CM$, and leaving $\alpha$ invariant (that is,
$\alpha (g(p)) = \alpha(p)$ for all $g \in G$ and $p \in \CM$.
\end{definition}
\thirdrev{Let $1$ be the unit of the group $G$.
We will denote by $\frak g$ the Lie algebra of $G$ and
by $\exp: \frak g \simeq T_1G \rightarrow G$ the exponential function
(See, for instance, \cite{Kirillov}). For instance, if 
$G=\mathbb U_n$, then $1$ is the $n \times n$ identity matrix, and
$\frak g$ is the algebra of skew-Hermitian matrices. Moreover,
$\exp(A) = I + A + \frac{1}{2}A^2 + \frac{1}{3!}A^3 + \cdots$.}{Remark 17}

Note that it may happen (for instance, if $G$ is a discrete group)
that $\frak g=\{0\}$ and hence $T_pG_p=\{0\}$.
}{ma08}

\rev{Given $p \in $ \thirdrev{$\CM$}{Remark 19}, $G(p) = \{ g(p): g \in G\}$ will denote the
$G$-orbit of $p$. The orbit $G(p)$ is a manifold \thirdrev{\cite[Cor 2.19]{Kirillov}}{Remark 20}. If the group $G$
is compact, the orbit is then \thirdrev{an embedded submanifold of $\CM$}{ADDED TEXT: instead of simply an embedded manifold.}. In any case,
$T_pG(p)$ will denote the tangent space of the orbit $G(p)$ at
$p$, as a subspace of $T_p\CM$. It can also be described
as the set of all
\[\frac{d}{dt}(\exp(ta)(p))\left.\right|_{t=0},
\]
for $a \in \frak g$, the Lie algebra of $G$.}{ma08}

\rev{For instance, when $G=\U_n \times \U_m$, then $\frak g$ is
$\A_n \times \A_m$ (the \thirdrev{skew-Hermitian}{Remark 21} matrices) and
$\exp(ta)$ is the usual matrix exponential: 
\[
\exp(t(a_1, a_2))(p) = 
\left(
I + t a_1 + \frac{t^2}{2} a_1^2 + \cdots 
\right)
p
\left(
I + t a_2 + \frac{t^2}{2} a_2^2 + \cdots 
\right)^*
\]
}{ma08}

\begin{theorem}\label{th-2}
\rev{Let $\CM$ be a smooth Riemannian manifold. 
Let $\alpha: \CM \rightarrow \mathbb R$ be of class $\mathcal C^2$,
and let $G$ be a group of symmetries of $\alpha$. Let $p \in \CM$.}{ma08}
Let $w=b+k\in T_p\CM$ where $k\in T_pG(p)$, $b\perp T_pG(p)$.
Let the vector field $K$ be the infinitesimal generator associated with some element $a$ in
the Lie Algebra $\mathfrak{g}$ of $G$, where $k=\frac{d}{dt} (\exp(ta)(p))\left.\right|_{t=0}$. Namely,
\[
K(q)=\frac{d}{dt}(exp(ta)q)\left.\right|_{t=0},\;\;q\in \CM.
\]
Let $\phi_t(q)=\phi(t,q)$ be the flow of $\grad\alpha$, defined for $t\in(-\varepsilon,\varepsilon)$ and $q$ close
enough to $p$. Let $B$ be a smooth vector field  in $\CM$ such that $B(\phi_t(p))=D\phi_t(p)b$ \rev{where $D$ denotes the usual  
derivative applied to the 
diffeormorphism $\phi_t: \CM \rightarrow \CM$ 
}{ma08}. Then, the following
equality holds: 
$$\hess{\alpha(p)}(w,w)=$$
\rev{\[
\hess{\alpha(p)}(b,b)+\frac{1}{2}\langle \grad(\|K\|^2)(p),\grad\alpha(p)\rangle_p+
\grad\alpha(\langle B,K\rangle)(p).
\]}{ma10}
\end{theorem}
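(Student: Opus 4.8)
The claim is a formula for the Hessian of $\alpha$ at $p$ evaluated on a vector $w=b+k$ with $k\in T_pG(p)$ tangent to the orbit and $b$ orthogonal to it. The strategy is to exploit bilinearity and symmetry of the Hessian: write
\[
\hess{\alpha(p)}(w,w)=\hess{\alpha(p)}(b,b)+2\,\hess{\alpha(p)}(b,k)+\hess{\alpha(p)}(k,k),
\]
and then identify the two cross/orbit terms with the two expressions in the statement. So the proof splits into two lemmas: (i) $\hess{\alpha(p)}(k,k)=\tfrac12\langle\grad(\|K\|^2)(p),\grad\alpha(p)\rangle_p$, and (ii) $2\,\hess{\alpha(p)}(b,k)=\grad\alpha(\langle B,K\rangle)(p)$.

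\textbf{Step 1: the orbit term.} Here I would use the defining formula $\hess{\alpha}(X,Y)=D_X(Y\alpha)-(D_XY)\alpha$ with $X=Y=K$, the infinitesimal generator of the one-parameter subgroup $\exp(ta)$. Since $G$ acts by symmetries, $\alpha$ is constant along each orbit, so $K\alpha\equiv 0$ on all of $\CM$; hence $D_K(K\alpha)=0$ and we are left with $\hess{\alpha(p)}(k,k)=-(D_KK)(p)\,\alpha=-\langle\grad\alpha(p),(D_KK)(p)\rangle_p$. The task is then to show $(D_KK)(p)$ equals $-\tfrac12\grad(\|K\|^2)(p)$ modulo something that pairs to zero with $\grad\alpha$. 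Because each $\exp(ta)$ is an isometry, $K$ is a Killing field, so $\langle D_XK,Y\rangle+\langle X,D_YK\rangle=0$ for all $X,Y$; taking $X=Y=K$ gives $\langle D_KK,K\rangle=0$, and the Killing identity also yields $X\langle K,K\rangle=2\langle D_XK,K\rangle=-2\langle D_KK,X\rangle$, i.e. $\grad(\|K\|^2)=-2\,D_KK$. Substituting gives exactly $\hess{\alpha(p)}(k,k)=\langle\grad\alpha(p),\tfrac12\grad(\|K\|^2)(p)\rangle_p$, as claimed. (One does not even need to isolate the component of $k$; the computation is valid at every $q$, and in particular at $p$.)

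\textbf{Step 2: the mixed term.} For $2\,\hess{\alpha(p)}(b,k)$, I would use the symmetry of the Hessian to write it as $\hess{\alpha(p)}(k,b)+\hess{\alpha(p)}(b,k)$ and the flow $\phi_t$ of $\grad\alpha$ to linearize one of the two slots. The idea is that $B$ is, by construction, the pushforward of $b$ along $\phi_t$, i.e. $B$ is $\phi_t$-related to the constant extension of $b$; combined with $\grad\alpha$ being the generator of $\phi_t$, this means $[\,\grad\alpha,B\,](p)=0$ (the Lie bracket vanishes at $p$ because $B$ is flow-invariant there). Then $\grad\alpha\big(\langle B,K\rangle\big)(p)=\langle D_{\grad\alpha}B,K\rangle_p+\langle B,D_{\grad\alpha}K\rangle_p$; using $D_{\grad\alpha}B=D_B\grad\alpha$ at $p$ (torsion-freeness plus vanishing bracket) and the Killing property $\langle B,D_{\grad\alpha}K\rangle=-\langle D_B K,\grad\alpha\rangle=\ldots$ rearranged via $\langle D_{\grad\alpha}K,B\rangle+\langle\grad\alpha,D_BK\rangle=0$, one reassembles $\langle D_B\grad\alpha,K\rangle+\langle D_K\grad\alpha,B\rangle$. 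Finally recognize that $\langle D_X\grad\alpha,Y\rangle=\hess{\alpha}(X,Y)$ (this is the standard identification of the Hessian with the covariant derivative of the gradient), so the right-hand side becomes $\hess{\alpha(p)}(b,k)+\hess{\alpha(p)}(k,b)=2\,\hess{\alpha(p)}(b,k)$. Adding Steps 1 and 2 to $\hess{\alpha(p)}(b,b)$ gives the stated identity.

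\textbf{Main obstacle.} The delicate point is Step 2: one must be careful that $B$ is only defined (and only flow-invariant) along the trajectory $\phi_t(p)$, not on a full neighborhood in a canonical way, so the vanishing of the bracket $[\grad\alpha,B]$ and the interchange $D_{\grad\alpha}B=D_B\grad\alpha$ hold \emph{at $p$} and must be justified from the relation $B(\phi_t(p))=D\phi_t(p)b$ by differentiating in $t$ at $t=0$ rather than from a global identity. Likewise one should check the Killing identities are being applied in the right slots. Everything else (bilinearity, symmetry of $\hess{}$, the gradient-of-norm computation) is routine once the Killing-field property of $K$ and the flow-invariance of $B$ are in hand.
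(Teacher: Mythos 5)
Your proof is correct, and Step~2 takes a genuinely different and arguably cleaner route than the paper. The paper reduces the cross-term to the general identity $2\hess{\alpha}(X,K)=\grad\alpha(\langle X,K\rangle)-\langle[\grad\alpha,X],K\rangle$ (proved via the Koszul formula, $K\alpha\equiv0$, and the Killing equation), and then kills the bracket term by an auxiliary geometric construction: after reducing WLOG to $k\neq0$ and $b,\grad\alpha(p)$ independent, they flow a codimension-$2$ slice $\RN_0$ by $\phi_t$ to get a hypersurface $\RN$ to which both $\grad\alpha$ and $B$ are tangent, invoke Frobenius to put $[\grad\alpha,B]$ tangent to $\RN$, and conclude $\langle[\grad\alpha,B],K\rangle(p)=0$. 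You instead observe directly that $[\grad\alpha,B](p)=0$: since $B(\phi_t(p))=D\phi_t(p)\,b$ and $D\phi_{-t}(\phi_t(p))\circ D\phi_t(p)=\mathrm{id}$, the Lie-derivative formula $[\grad\alpha,B](p)=\tfrac{d}{dt}\big|_{t=0}(\phi_{-t})_*B(\phi_t(p))$ gives $\tfrac{d}{dt}\big|_{t=0}b=0$. This is stronger (the paper only gets the component along $K$ to vanish), needs no genericity reduction, and avoids Frobenius. Your Step~1 ($\grad(\|K\|^2)=-2D_KK$ from the Killing equation, combined with $K\alpha\equiv0$) is also a slightly more direct derivation than the paper's Koszul-formula route, though both are standard. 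One small gap to flesh out: in your final reassembly you pass from $-\langle D_BK,\grad\alpha\rangle$ to $\langle D_K\grad\alpha,B\rangle$ without comment; this uses $K\alpha\equiv0$ (so $\hess{\alpha}(B,K)=-\langle D_BK,\grad\alpha\rangle$) plus symmetry of the Hessian, or equivalently the identity $[K,\grad\alpha]\equiv0$ (because the isometries generated by $K$ preserve $\alpha$ and hence its gradient). It would be worth making that step explicit. Overall the decomposition and both key lemmas match the paper's; the difference is entirely in how the bracket term in the mixed Hessian is eliminated.
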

\rev{Above, 
$\grad\alpha(\langle B,K\rangle)(p) =
\langle \grad \alpha(p), \grad(\langle B,K \rangle_p) \rangle_p$
is the directional derivative of 
$\langle B,K\rangle$ with respect to $\grad \alpha$.}{ma08}

Let us recall \rev{from \eqref{intr-hessian} the intrinsic definition of 
the second covariant
derivative or {\em Hessian}.}{mi18}
\rev{\[
\hess{\alpha(p)}(v,w)=X(Y(\alpha))_p-(D_XY)(\alpha)_p,
\]}{ma10}
where $X,Y$ are vector fields, $X(p)=v$, $Y(p)=w$, and $D$ is the Levi-Civita connection. 
\rev{
Also, $[X,Y]$ is the {\bf Lie bracket} of two vector fields
$X$ and $Y$. It is defined for any $\alpha$ of class $\mathcal C^2$ by
\[
[X,Y] (\alpha) = X(Y(\alpha))-Y(X(\alpha)) .
\]
It turns out that this is a first order differential operator,
hence $[X,Y]$ is a vector field.}{mi17}

\rev{Another useful identity relating the Lie bracket and the
Levi-Civita connection is:
\begin{equation}\label{eq-bracket-connection} 
[X,Y] = D_X Y - D_Y X
\end{equation}
}{mi19}

The proof of Theorem
\ref{th-2} is a consequence of the two following lemmas:

\begin{lemma}\label{lem-1}
For any vector field $X$ on $M$, we have
\[
2\hess{\alpha}(X,K)=
\grad\alpha(\langle X,K\rangle)-\langle [\grad\alpha,X],K\rangle.
\]
Moreover,
\rev{
\begin{equation}\label{eq:hessian1}
\hess{\alpha(p)}(k,k)=\frac{1}{2}\langle\grad(\|K\|^2)(p),\grad\alpha(p)\rangle_p,
\end{equation}}{ma10}
\end{lemma}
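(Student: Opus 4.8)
The plan is to prove Lemma~\ref{lem-1} in two steps, corresponding to the two displayed assertions. First I would establish the general identity
\[
2\hess{\alpha}(X,K)=\grad\alpha(\langle X,K\rangle)-\langle[\grad\alpha,X],K\rangle
\]
for an arbitrary vector field $X$ and the Killing-type field $K$ generated by $a\in\mathfrak g$. The natural route is to expand both sides using the intrinsic definition $\hess{\alpha}(X,K)=X(K(\alpha))-(D_XK)(\alpha)$, which can be rewritten via the metric compatibility of the Levi-Civita connection as $\hess{\alpha}(X,K)=X\langle\grad\alpha,K\rangle-\langle\grad\alpha,D_XK\rangle$. Symmetry of the Hessian, $\hess{\alpha}(X,K)=\hess{\alpha}(K,X)$, gives a second expression $\hess{\alpha}(X,K)=K\langle\grad\alpha,X\rangle-\langle\grad\alpha,D_KX\rangle$. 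Adding the two and using $D_XK-D_KX=[X,K]$ (equation~\eqref{eq-bracket-connection}) collapses the connection terms into $\langle\grad\alpha,[X,K]\rangle$; then I use the fact that $K$ is the infinitesimal generator of a one-parameter group of \emph{isometries} of $\CM$, so that the flow of $K$ preserves $\alpha$, i.e.\ $K(\alpha)\equiv 0$, which kills the term $K\langle\grad\alpha,X\rangle$ up to a correction handled by the Killing identity. Rearranging, and writing $\grad\alpha(\langle X,K\rangle)$ for the directional derivative $X\langle X,K\rangle$ evaluated against $\grad\alpha$ — more precisely using that $\hess{\alpha}$ only sees $X$ through its value and that one can take $X$ to commute appropriately — yields the stated identity with $[\grad\alpha,X]$ in place of $[X,K]$ after one more application of symmetry of the Hessian in the roles of $\grad\alpha$ and $X$.

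Second, I would specialize to $X=K$. The identity just proved gives
\[
2\hess{\alpha}(K,K)=\grad\alpha(\langle K,K\rangle)-\langle[\grad\alpha,K],K\rangle=\grad\alpha(\|K\|^2)-\langle[\grad\alpha,K],K\rangle.
\]
To finish I need the second term to vanish. This is exactly where the isometry hypothesis does its work: since $K$ generates a flow by isometries and $\alpha$ is invariant under that flow, the vector field $\grad\alpha$ is invariant under the flow of $K$ as well, hence $[\grad\alpha,K]=\mathcal L_K(\grad\alpha)=0$ (the Lie derivative of an invariant tensor along the generating field is zero). Plugging this in gives $2\hess{\alpha}(K,K)=\grad\alpha(\|K\|^2)$, i.e.\ equation~\eqref{eq:hessian1} after dividing by $2$ and rewriting $\grad\alpha(\|K\|^2)(p)=\langle\grad(\|K\|^2)(p),\grad\alpha(p)\rangle_p$.

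The main obstacle I anticipate is the first step: one has to be careful about the fact that $K$ and $X$ are honest vector fields while the Hessian at $p$ depends only on the values $X(p)$ and $K(p)$, so bookkeeping of which bracket ($[X,K]$ versus $[\grad\alpha,X]$ versus $[\grad\alpha,K]$) survives requires using symmetry of $\hess{\alpha}$ twice and invoking tensoriality to replace field-level brackets by the ones that are actually needed. The cleanest way to keep this straight is to work in normal (geodesic) coordinates at $p$, where $(D_XY)_p$ reduces to the ordinary directional derivative, so that all the connection terms become elementary derivatives and the bracket manipulations are literally the Leibniz rule; the invariance fact $[\grad\alpha,K]=0$ is then the only genuinely geometric input and follows because differentiating the identity $\alpha(\exp(ta)q)=\alpha(q)$ and the isometry identity $\langle\cdot,\cdot\rangle_{\exp(ta)q}=(\exp(ta))^*\langle\cdot,\cdot\rangle_q$ in $t$ shows $\grad\alpha$ is $\exp(ta)$-related to itself. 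I would also remark that $\grad\alpha(\langle X,K\rangle)$ is shorthand for $\langle\grad\alpha(p),\grad\langle X,K\rangle(p)\rangle_p$, consistent with the notational convention already fixed in the statement of Theorem~\ref{th-2}.
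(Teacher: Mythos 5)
Your treatment of the second assertion is correct and actually slicker than the paper's: the paper only shows the scalar identity $\langle[K,\grad\alpha],K\rangle=0$ by combining metric compatibility with the Killing identity, whereas you observe that $\grad\alpha$ itself is invariant under the flow of $K$ (since that flow consists of isometries preserving $\alpha$), hence $[K,\grad\alpha]=\mathcal L_K\grad\alpha=0$ identically. That stronger vanishing is correct and does the job.

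The first assertion, however, has a genuine gap. Starting from
\[
\hess{\alpha}(X,K)=X\langle\grad\alpha,K\rangle-\langle\grad\alpha,D_XK\rangle
\quad\text{and}\quad
\hess{\alpha}(X,K)=K\langle\grad\alpha,X\rangle-\langle\grad\alpha,D_KX\rangle,
\]
adding the two gives a $-\langle\grad\alpha,D_XK+D_KX\rangle$ term, and $D_XK+D_KX$ is the \emph{sum}, not the commutator: the torsion-free identity $[X,K]=D_XK-D_KX$ applies to the \emph{difference}. So ``collapsing the connection terms into $\langle\grad\alpha,[X,K]\rangle$'' does not happen, and the rest of that step (``up to a correction handled by the Killing identity,'' ``one more application of symmetry of the Hessian in the roles of $\grad\alpha$ and $X$'') is too vague to repair it. Note also that $\grad\alpha(\langle X,K\rangle)$ means the derivative of $\langle X,K\rangle$ along $\grad\alpha$, i.e. $\langle\grad\alpha,\grad\langle X,K\rangle\rangle$, not $X\langle X,K\rangle$ as you parenthetically wrote.

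What is actually needed is the full Koszul formula for $2\langle D_XK,\grad\alpha\rangle$ (the paper's equation~\eqref{eq:connectionformula}), which produces six terms; the terms $X\langle K,\grad\alpha\rangle$ and the combination $-K\langle X,\grad\alpha\rangle-\langle[X,K],\grad\alpha\rangle+\langle[K,\grad\alpha],X\rangle$ then vanish using $K(\alpha)=0$ and the Killing identity~\eqref{eq:13}, leaving exactly $\grad\alpha(\langle X,K\rangle)-\langle[\grad\alpha,X],K\rangle$. Alternatively, a direct route is to expand the right-hand side: $\grad\alpha(\langle X,K\rangle)-\langle[\grad\alpha,X],K\rangle=\langle X,D_{\grad\alpha}K\rangle+\langle D_X\grad\alpha,K\rangle$, then apply the Killing identity to the first summand and metric compatibility plus $\langle\grad\alpha,K\rangle\equiv 0$ to the second, obtaining $-2\langle D_XK,\grad\alpha\rangle=2\hess{\alpha}(X,K)$. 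Your observation $[K,\grad\alpha]=0$ would in fact simplify the Koszul route further (it kills $\langle[K,\grad\alpha],X\rangle$ on the spot), but you still need Koszul or an equivalent expansion to produce the $[\grad\alpha,X]$ bracket; the ``add the two symmetric Hessian expressions'' maneuver does not.
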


\begin{proof}
We recall that for vector fields $X,Y,Z$,
\begin{equation}\label{eq:connectionformula}
2\langle D_XY,Z\rangle=X(\langle Y,Z\rangle)+Y(\langle X,Z\rangle)-Z(\langle X,Y\rangle)+
\end{equation}
\[
\langle [X,Y],Z\rangle+\langle [Z,X],Y\rangle-\langle [Y,Z],X\rangle.
\]
Note that $K(p)=k$ and $K(q)\in T_q G(q)$ for $q\in \CM$. As $\alpha$ is $G$-invariant,
\begin{equation}\label{eq:10}
K(\alpha)=\langle K,\grad \alpha\rangle=0.
\end{equation} Moreover, the one-parameter group generated by $K$ consists of global isometries, 
thus $K$ is a Killing vector field, which implies that for any pair of vector fields $X,Y$,
\[
\langle D_YK,X\rangle+\langle D_XK,Y\rangle=0,\text{ or equivalently
using (\ref{eq:connectionformula})}
\]
\begin{equation}\label{eq:13}
K(\langle Y,X\rangle)+\langle [Y,K],X\rangle+\langle [X,K],Y\rangle=0.
\end{equation}
We can now compute

\[
2\hess{\alpha}(X,K)=2X(K(\alpha))-2(D_XK)(\alpha)=-2\langle D_XK,\grad\alpha\rangle=
\]
\[
-X(\langle K,\grad\alpha\rangle)-K(\langle X,\grad\alpha\rangle)+\grad\alpha(\langle X,K\rangle)
\]
\[
-\langle [X,K],\grad\alpha\rangle-\langle [\grad\alpha,X],K\rangle+\langle [K,\grad\alpha],X\rangle.
\]
From (\ref{eq:13}) we know that 
$$-K(\langle X,\grad\alpha\rangle)-\langle [X,K],\grad\alpha\rangle+\langle [K,\grad\alpha],X\rangle=0.$$ 
Using  $\langle\grad\alpha,K\rangle=0$, we conclude
\[
2\hess{\alpha}(X,K)=
\grad\alpha(\langle X,K\rangle)-\langle [\grad\alpha,X],K\rangle,
\]
which proves the first assertion. 

When $X = K$, the second term above vanishes:
\rev{using \eqref{eq-bracket-connection},}{mi19}
\begin{eqnarray*}
\langle[K,\grad \alpha],K\rangle&=&
\langle D_K\grad \alpha,K\rangle-\langle D_{\grad \alpha}K,K\rangle
\\
&=&
\langle D_K\grad \alpha,K\rangle+\langle \grad \alpha,D_{K}K\rangle\\
&=&K(\langle K,\grad \alpha\rangle)\\
&=&0.
\end{eqnarray*}
Equation (\ref{eq:hessian1}) follows.

\end{proof}

\begin{lemma}\label{lem-2}
\rev{\[
2\hess{\alpha(p)}(k,b)=\grad\alpha(\langle B,K\rangle)(p).
\]}{ma10}
\end{lemma}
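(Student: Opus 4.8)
The plan is to apply the first assertion of Lemma~\ref{lem-1} with the choice of vector field $X = B$, and then to exploit the symmetry of the Hessian together with the special way $B$ was constructed. Concretely, Lemma~\ref{lem-1} gives
\[
2\hess{\alpha}(B,K) = \grad\alpha(\langle B,K\rangle) - \langle [\grad\alpha, B], K\rangle ,
\]
so, evaluating at $p$ and using $\hess{\alpha(p)}(b,k) = \hess{\alpha(p)}(k,b)$ (symmetry of the second covariant derivative, recalled after~\eqref{intr-hessian}), the claim $2\hess{\alpha(p)}(k,b) = \grad\alpha(\langle B,K\rangle)(p)$ is equivalent to the single identity $\langle [\grad\alpha, B], K\rangle(p) = 0$. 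Since $K(p) = k$, it suffices to prove the stronger statement $[\grad\alpha, B](p) = 0$.

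The key observation is that $B$ was defined precisely so as to be invariant under the flow $\phi_t$ of $\grad\alpha$ along the orbit of $p$: by hypothesis $B(\phi_t(p)) = D\phi_t(p)\, b = D\phi_t(p)\, B(p)$ for all $t \in (-\varepsilon,\varepsilon)$. Recall that the Lie bracket of $\grad\alpha$ and $B$ at $p$ coincides with the Lie derivative $(\CL_{\grad\alpha} B)(p)$, which can be computed from the flow by
\[
[\grad\alpha, B](p) = \frac{d}{dt}\Big|_{t=0} \Big( D\phi_{-t}(\phi_t(p))\, B(\phi_t(p)) \Big) .
\]
Substituting the defining relation for $B$ and using $\phi_{-t} \circ \phi_t = \mathrm{id}$, hence $D\phi_{-t}(\phi_t(p)) \circ D\phi_t(p) = \mathrm{id}$ on $T_p\CM$, the expression inside the derivative equals $b$ for every $t$, so its derivative at $t=0$ vanishes. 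This yields $[\grad\alpha,B](p) = 0$, and hence the lemma.

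I expect the only subtlety to be bookkeeping rather than a genuine obstacle: one must make sure that $[\grad\alpha, B](p)$ depends only on the values of $B$ along the curve $t \mapsto \phi_t(p)$ — the only values the hypothesis pins down — and this is exactly what the flow formula for the Lie derivative provides, so the particular smooth extension of $B$ to a global vector field is immaterial. (If one prefers to avoid the flow formula, the same cancellation can be obtained from~\eqref{eq-bracket-connection} by writing $[\grad\alpha, B](p) = (D_{\grad\alpha} B)(p) - (D_B \grad\alpha)(p)$ and identifying $(D_{\grad\alpha} B)(p)$ with the covariant derivative of $s \mapsto B(\phi_s(p))$ at $s=0$, which by the defining relation for $B$ differs from $(D_B\grad\alpha)(p)$ only by the Christoffel correction, forcing the bracket to vanish at $p$.) Either way the proof is short once Lemma~\ref{lem-1} is available.
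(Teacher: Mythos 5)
Your proof is correct, and it takes a genuinely cleaner route than the one in the paper. Both arguments begin by applying the first assertion of Lemma~\ref{lem-1} with $X = B$ (together with the symmetry of $\hess{\alpha}$), which reduces the claim to the scalar vanishing $\langle [\grad\alpha, B], K\rangle(p) = 0$. The paper establishes this indirectly: after perturbing to assume $k \neq 0$ and $b, \grad\alpha(p)$ linearly independent, it builds a codimension-one submanifold $\RN$ swept out by the flow of $\grad\alpha$ from a codimension-two seed $\RN_0$ chosen so that $b \in T_p\RN_0$, $k \perp T_p\RN_0$, and $\grad\alpha(p) \notin T_p\RN_0$; both $\grad\alpha$ and $B$ are tangent to $\RN$, so by the Frobenius theorem $[\grad\alpha, B](p) \in T_p\RN$, a hyperplane orthogonal to $k$. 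You instead prove the stronger pointwise identity $[\grad\alpha, B](p) = 0$ directly from the flow characterization of the Lie derivative: because $B(\phi_t(p)) = D\phi_t(p)\, b$ by hypothesis and $D\phi_{-t}(\phi_t(p)) \circ D\phi_t(p) = \mathrm{id}$, the pullback $D\phi_{-t}(\phi_t(p))\, B(\phi_t(p))$ is constantly equal to $b$, so its $t$-derivative at $t=0$ vanishes. This is shorter, dispenses with both the non-degeneracy assumption and the auxiliary submanifold construction, and your remark about which data actually determine the bracket at $p$ (only $B$ along the orbit and $\grad\alpha$ near $p$, exactly what the flow formula consumes) is precisely the right sanity check that the hypothesis pins it down; the parenthetical fallback via \eqref{eq-bracket-connection} and the symmetry of the Christoffel symbols reaches the same conclusion.
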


\begin{proof}
By continuity of the formulas in the lemma, we can assume that $k\neq0$ and that $b,\grad\alpha(p)$ are lineary independent.
%
%
Let $\RN_0$ be a codimension $2$ submanifold of $\CM$ with $p$ in its interior. Assume that 
$b\in T_p\RN_0$, $k$ is orthogonal to $T_p \RN_0$, and $\grad\alpha(p)\not\in T_pN_0$.

Let $\RN = \cup \phi_t(\RN_0)$ with $\phi_t$ the flow associated with $\grad\alpha$ and where the union is 
taken in a small interval around $t=0$. $\RN$ is a  codimension $1$ submanifold. For small $\varepsilon$, 
the integral curve of $\grad \alpha$ is thus contained in $\RN$, and for $q=\phi_t(p)$, 
we have $B(q)=D\phi_t(p)b\in T_q\RN$. Both $\grad\alpha$ and $B$ are tangent to $\RN$ by construction. 
By Frobenius Theorem, $[B,\grad \alpha]$ is again tangent to $\RN$. In particular, $[\grad \alpha,B](p)\in T_p \RN$, 
and hence $\langle [\grad \alpha,B],K\rangle(p)=0$. From Lemma \ref{lem-1},
\[
2\hess{\alpha}(B,K)=\grad\alpha(\langle B,K\rangle)-\langle [\grad\alpha,B],K\rangle=\grad\alpha(\langle B,K\rangle)
\]
at $p$ as wanted.
\end{proof}

\vskip 3mm
\begin{proof}[Proof of Theorem~\ref{th-2}] 
The \rev{second covariant derivative}{ma07} is a symmetric bilinear form. Thus,
\[
\hess{\alpha(p)}(v,v)=\hess{\alpha(p)}(b,b)+\hess{\alpha(p)}(k,k)+2\hess{\alpha(p)}(b,k).
\]
Theorem \ref{th-2} follows from lemmas \ref{lem-1} and \ref{lem-2}. 
\end{proof}

\begin{corollary}\label{cor-1}
 Assume that for every $p\in \CM$:
\begin{itemize}
\item \rev{$\hessk{\log(\alpha)(p)}$}{ma10} is positive semi-definite in $\left(T_pG(p)\right)^\perp$,
\item For $b\in T_p\CM$, $b\perp T_pG(p)$, we have that $D\phi_t(p)b\perp T_{\phi_t(p)}G(\phi_t(p))$. 
Here, $\phi_t(q)=\phi(t,q)$ is the flow of $\grad\alpha$, defined for $t\in(-\varepsilon,\varepsilon)$ 
and $q$ close enough to $p$.
\item For every $a\in\mathfrak{g}$, the associated vector field 
$K(q)=\frac{d}{dt}(exp(ta)q)\left.\right|_{t=0}$, $q\in M$, satisfies
\[
\alpha D(\|K\|^2)(\grad\alpha)+\|K\|^2\|\grad\alpha\|^2\geq0.
\]
\end{itemize}
Then, $\alpha$ is self-convex in $\CM$.
\end{corollary}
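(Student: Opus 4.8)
The plan is to reduce the statement to the pointwise criterion for self-convexity recalled earlier, Proposition~2 of~\cite{BDMS}: since $\alpha$ is $\mathcal C^2$ and positive, it suffices to prove that
\[
Q_p(w) := 2\alpha(p)\,\hess{\alpha(p)}(w,w) + \|D\alpha(p)\|_p^2\,\|w\|_p^2 - 4\big(D\alpha(p)w\big)^2 \ \ge\ 0
\]
for every $p\in\CM$ and every $w\in T_p\CM$. So I would fix $p$ and use the orthogonal decomposition $T_p\CM = T_pG(p)\oplus\big(T_pG(p)\big)^\perp$ to write $w=b+k$ with $k\in T_pG(p)$ and $b\perp T_pG(p)$. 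Then I pick $a\in\mathfrak g$ with $k=\frac{d}{dt}(\exp(ta)(p))|_{t=0}$, take its infinitesimal generator $K$, let $\phi_t$ be the flow of $\grad\alpha$, and choose a smooth vector field $B$ with $B(\phi_t(p))=D\phi_t(p)b$. This is exactly the data appearing in the hypotheses of Theorem~\ref{th-2}, so that theorem applies to $w=b+k$.

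The first genuine ingredient is the orthogonality $\grad\alpha(p)\perp T_pG(p)$, which holds because $\alpha$ is $G$-invariant, so every generator satisfies $K(\alpha)=\langle K,\grad\alpha\rangle\equiv 0$ (equation~\eqref{eq:10}). Hence $D\alpha(p)k=0$, so $D\alpha(p)w = D\alpha(p)b$, and $\|w\|_p^2=\|b\|_p^2+\|k\|_p^2$. Substituting the conclusion of Theorem~\ref{th-2} into $Q_p(w)$ and regrouping the terms then gives
\[
Q_p(w) = Q_p(b) + \Big(\alpha(p)\,\langle\grad(\|K\|^2)(p),\grad\alpha(p)\rangle_p + \|D\alpha(p)\|_p^2\,\|k\|_p^2\Big) + 2\alpha(p)\,\grad\alpha(\langle B,K\rangle)(p).
\]

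Next I would dispose of the three summands using the three hypotheses in turn. The first summand $Q_p(b)$ is the self-convexity quantity evaluated on $b\in(T_pG(p))^\perp$; since $Q_p(\cdot)$ is, direction by direction, a positive multiple of $\hessk{\log(\alpha)(p)}(\cdot,\cdot)$ — this pointwise identity is what underlies Proposition~2 of~\cite{BDMS} — the first hypothesis (positive semidefiniteness of $\hessk{\log\alpha(p)}$ on $(T_pG(p))^\perp$) gives $Q_p(b)\ge 0$. The second summand, after rewriting $\langle\grad(\|K\|^2)(p),\grad\alpha(p)\rangle_p = D(\|K\|^2)(\grad\alpha)(p)$, $\|k\|_p^2=\|K(p)\|_p^2$ and $\|D\alpha(p)\|_p=\|\grad\alpha(p)\|_p$, is exactly the left-hand side of the inequality in the third hypothesis evaluated at $p$, hence $\ge 0$. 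The third summand vanishes: by the second hypothesis, $B(\phi_t(p))=D\phi_t(p)b$ stays orthogonal to $T_{\phi_t(p)}G(\phi_t(p))$, while $K(\phi_t(p))\in T_{\phi_t(p)}G(\phi_t(p))$ always, so $t\mapsto\langle B,K\rangle(\phi_t(p))$ is identically zero and therefore has zero derivative at $t=0$; that is, $\grad\alpha(\langle B,K\rangle)(p)=0$. Thus $Q_p(w)\ge 0$, and since $p$ and $w$ were arbitrary, $\alpha$ is self-convex by Proposition~2 of~\cite{BDMS} (equivalently, by~\eqref{eq:Hesssmooth}).

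I do not expect a serious obstacle, since the corollary is essentially a repackaging of Theorem~\ref{th-2} together with its three hypotheses. The two points requiring care are: keeping the orthogonal splitting honest — it is precisely $\grad\alpha\perp T_pG(p)$ that eliminates the cross terms in $\|w\|_p^2$ and in $(D\alpha(p)w)^2$ and lets the three hypotheses act on independent pieces — and the vanishing of the third summand, where one must use the second hypothesis along the whole $\grad\alpha$-trajectory through $p$, not merely at $p$ itself. The only external input is the pointwise-in-direction form of Proposition~2 of~\cite{BDMS}, which is what permits the first hypothesis to control $Q_p(b)$ as a statement about the single direction $b$ rather than only globally.
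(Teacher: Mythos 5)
Your proof is correct, and it parallels the paper's but through a genuinely different lens. The paper applies Theorem~\ref{th-2} to the function $\log\alpha$ in the condition metric $\CM_\kappa$ and then rewrites the resulting $\kappa$-quantities in the original metric; you instead apply Theorem~\ref{th-2} to $\alpha$ in the original metric $\CM$ and argue via the quantity $Q_p(w)=2\alpha\,\hess{\alpha}(w,w)+\|D\alpha\|^2\|w\|^2-4(D\alpha w)^2$ from Proposition~2 of~\cite{BDMS}. The one step you rely on without deriving it is the pointwise identity $Q_p(w)=2\alpha(p)^3\,\hessk{\log\alpha(p)}(w,w)$, needed to read hypothesis (i) direction-by-direction as $Q_p(b)\ge 0$; this identity is indeed a straightforward conformal-change computation and is implicit in the proof of~\cite{BDMS}'s Proposition~2, so citing it is reasonable, but for a fully self-contained argument it should be spelled out. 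A small payoff of your route is that it uses precisely the flow of $\grad\alpha$ in $(\CM,\langle\cdot,\cdot\rangle)$, matching the second hypothesis of the corollary as literally stated, whereas the paper formally applies Theorem~\ref{th-2} in $\CM_\kappa$ — where the relevant gradient is $\grad_\kappa\log\alpha=\alpha^{-2}\grad\alpha$ — and must implicitly invoke that the two gradient flows share the same unparametrized integral curves, so the orthogonality needed to kill the cross term is insensitive to that reparametrization. Both proofs land on the same three reductions (the restricted Hessian, the $K$-inequality, and the vanishing of the mixed term along the $\grad\alpha$-trajectory), and both are valid.
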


\begin{proof}
$\al$ is self-convex if and only if  $\hessk{\log(\alpha)}$ 
is positive semi-definite. Now, let $v=b+k\in \CM$. According to Theorem \ref{th-2},
\[
\hessk{\log(\al)(p)}(v,v)=\hessk{\log(\al)(p)}(b,b)+
\]
\rev{\[
\frac{1}{2}\langle {\rm grad}_\kappa ((\|K\|_\ka)^2)(p),{\rm grad}_\kappa \log(\al)(p)\rangle_{\ka,p} +
{\rm grad}_{\kappa,p} \alpha(\langle B,K\rangle_\ka)(p),
\]}{ma10}
where $K$ is as defined in Theorem \ref{th-2} and $B$ is a vector field such that $B(\phi_t(p))=D\phi_t(p)b$. 
Note that ${\rm grad}_\kappa \alpha(\langle B,K\rangle_\ka)$ depends only on the value of $B$, 
and $K$ along the integral curve $\phi_t(p)$. Moreover,
\[
\langle B,K\rangle_\ka(\phi_t(p))=\al(\phi_t(p))\langle B(\phi_t(p)),K(\phi_t(p))\rangle= 
\]
\[
\al(\phi_t(p))\langle D\phi_t(p)(b),K(\phi_t(p))\rangle=0,
\]
from the second item in the hypotheses of our corollary. Thus, we have
$$
\hessk{\log(\al)(p)}(v,v)=
$$
\rev{\[
\hessk{\log(\al)(p)}(b,b)+
\frac{1}{2}\langle {\rm grad}_\kappa ((\|K\|_\ka)^2)(p),{\rm grad}_\kappa \log(\al)(p)\rangle_{\ka,p}.
\]}{ma10}
This quantity has to be non-negative for every $v$ or, equivalently,
\begin{itemize}
\item \rev{$\hessk{\log\al(p)}$}{ma10}  has to be positive semi-definite in $\left(T_pG(p)\right)^\perp$, and
\item \rev{$\langle {\rm grad}_\kappa ((\|K\|_\ka)^2)(p),{\rm grad}_\kappa \log(\al)(p)\rangle_{\ka,p}\geq0$}{ma10}
for every vector field $K$, $K(q)=\frac{d}{dt}(exp(ta)q)\|_{t=0}$ where  $a\in \mathfrak{g}$.
\end{itemize}
The second of these two items can be re-written using the original Riemannian structure $\langle\cdot,\cdot\rangle$. 
Note that
\[
(\|K\|_\ka)^2=\al\|K\|^2,
\]
\[
{\rm grad}_\kappa ((\|K\|_\ka)^2)=\frac{1}{\al}\grad(\al\|K\|^2)= \grad\|K\|^2+\|K\|^2\frac{\grad\alpha}{\alpha},
\]
\[
{\rm grad}_\kappa \log(\al)=\frac{1}{\al}\grad(\log\al)=\frac{\grad\alpha}{\alpha^2}.
\]
Thus,
\begin{eqnarray*}
\langle \grad_\kappa ((\|K\|_\ka)^2),{\rm grad}_\kappa \log(\al)\rangle_\ka
&=&
\langle \grad\|K\|^2+\|K\|^2\frac{\grad\alpha}{\alpha},\frac{\grad\alpha}{\alpha^2}\rangle
\\
&=&
\frac{1}{\al^3}\left(\alpha D(\|K\|^2)(\grad\al)+\|K\|^2\|\grad\al\|^2\right).\\
\end{eqnarray*}
The corollary follows.
\end{proof}

\section{Self-convexity in spaces of matrices}\label{sec-4}

Let $u \leq n$ and $(k)=(k_1, \ldots , k_u) \in \N^u $ such that $k_1 + \cdots + k_u=n$. We define $\CP_{(k)}$ as the
set of matrices $A \in \GL$ 
with $u$ distinct singular values
$$\si_1(A) > \cdots > \si_u(A) > 0,$$
$\si_i(A)$ having the multiplicity $k_i$. Such a matrix has a singular value decomposition $A=UDV^*$ with $U \in \U_n$,
$V \in \U_m$ and $D \in \GL$ with
$$D = \diag \left(\overset{k_1}{\overbrace{\sigma_{1},\ldots,\sigma_{1}}},\ldots,
\overset{k_u}{\overbrace{\sigma_{u},\ldots,\sigma_{u}}}\right) = 
\diag \left(\sigma_{1}I_{k_1}, \ldots , \sigma_{u}I_{k_u}\right). $$
\rev{Above, $\mathbb U_n$ is the group of unitary $n\times n$ matrices.
If $\K = \R$, it should be replaced by the group of orthogonal
$n \times n$ matrices.}{mi21}

We also let 
$$\CD_{(k)}=\left\{ D \in \CP_{(k)} \ : \ D = \diag \left(\sigma_{1}I_{k_1}, \ldots , \sigma_{u}I_{k_u}\right), \ \si_1
> \dots > \si_u \right\}.$$

\rev{Notice that the singular values $\sigma_1 > \cdots > \sigma_u$
can vary within each $\CP_{(k)}$ or each $\CD{(k)}$.
}{mi20}

\begin{proposition}\label{prop-1}
$\CP_{(k)}$ is a real smooth embedded submanifold of $\GL$. Its real codimension is
\begin{itemize}
\item $k_1^2+\cdots+k_u^2-u$ if $\K=\C$.
\item $\frac{1}{2}(n+k_1^2+\cdots+k_u^2)-u$ if $\K=\R$.
\end{itemize}
The tangent space to $\CP_{(k)}$ at a matrix
\[ D = 
\begin{pmatrix}
\sigma_1I_{k_1}&0&0&0& \cdots &0\\
0&\ddots&0&0& \cdots &0\\
0&0&\sigma_u I_{k_u}&0& \cdots &0
\end{pmatrix}
\]
is the set of matrices 
\[
\begin{pmatrix}
\lambda_1I_{k_1}+{A}_1&*&*&*& \cdots &*\\
*&\ddots&*&*& \cdots &*\\ 
*&*&\lambda_uI_{k_u}+{A}_u&*& \cdots &*
\end{pmatrix}
\]
where ${A}_1, \ldots, {A}_u$ are skew-symmetric matrices of respective sizes $k_1, \ldots, k_u$, $\lambda_1, \ldots,
\lambda_u \in \R$, and the other entries are complex numbers (real, if $\K=\R$). 
Moreover, for any $i = 1, \ldots, u$, $\si_i : \CP_{(k)} \ra  \R$ is a smooth function.
\end{proposition}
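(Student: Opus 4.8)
The plan is to exhibit $\CP_{(k)}$ as the preimage, under an explicit submersion, of a classical submanifold of the space of positive definite Hermitian (resp.\ symmetric, if $\K=\R$) matrices, and then to read off the codimension and the tangent space from that description. Concretely, consider $\Phi\colon\GL\to\SPD$, $\Phi(A)=AA^*$; since $n\le m$ and $\rank A=n$, the matrix $AA^*$ is positive definite, and its $n$ eigenvalues are exactly $\sigma_1(A)^2\ge\dots\ge\sigma_n(A)^2$. The map $\Phi$ is a submersion: its derivative at $A$ sends $\dot A$ to $\dot A A^*+A\dot A^*$, and for any Hermitian (resp.\ symmetric) $H$ the choice $\dot A=\tfrac12 H(AA^*)^{-1}A$ gives $\dot A A^*+A\dot A^*=H$, using that $AA^*$ is invertible. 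Note that over $\C$ the map $\Phi$ is real-analytic but not holomorphic, which is why $\CP_{(k)}$ will be only a real submanifold.

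Next, let $\mathcal S_{(k)}\subset\SPD$ be the set of positive definite matrices with exactly $u$ distinct eigenvalues, of multiplicities $k_1,\dots,k_u$. It is classical that $\mathcal S_{(k)}$ is a smooth embedded submanifold: it is the image of the injective immersion $(\mu_1>\dots>\mu_u>0,\,W)\mapsto W\,\diag(\mu_1 I_{k_1},\dots,\mu_u I_{k_u})\,W^*$ defined on an open subset of $\R^u$ times the compact flag-type manifold $\U_n/(\U_{k_1}\times\dots\times\U_{k_u})$ (resp.\ $\mathbb O_n/(\mathbb O_{k_1}\times\dots\times\mathbb O_{k_u})$ when $\K=\R$), and this immersion is proper once the $\mu_i$ range over a compact set, hence an embedding. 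A dimension count---$\dim\mathcal S_{(k)}=u+\dim\U_n-\sum_i\dim\U_{k_i}$ over $\C$, and the orthogonal analogue over $\R$---yields precisely the codimensions $k_1^2+\dots+k_u^2-u$ (if $\K=\C$) and $\tfrac12(n+k_1^2+\dots+k_u^2)-u$ (if $\K=\R$) in $\SPD$, which has real dimension $n^2$ (resp.\ $\tfrac{n(n+1)}2$). Moreover, on $\mathcal S_{(k)}$ the $i$th distinct eigenvalue $\mu_i$ is a smooth function of $M$, since $\mu_i=\tfrac1{k_i}\trace\bigl(M\,P_i(M)\bigr)$ with $P_i(M)$ the Riesz spectral projection attached to a small circle around $\mu_i$; $P_i$ depends smoothly on $M$ as long as no eigenvalue of $M$ lies on that circle, which holds on a full neighbourhood. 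Since $\CP_{(k)}=\Phi^{-1}(\mathcal S_{(k)})$ by definition, the preimage theorem shows $\CP_{(k)}$ is a real smooth embedded submanifold of $\GL$ with the same real codimension as $\mathcal S_{(k)}$ in $\SPD$, i.e.\ the announced value; and $\sigma_i^2=\mu_i\circ\Phi$ is the restriction of a function smooth near each point, so $\sigma_i=\sqrt{\sigma_i^2}$ is smooth on $\CP_{(k)}$.

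It remains to identify $T_D\CP_{(k)}=(D\Phi_D)^{-1}\bigl(T_{\tilde D^2}\mathcal S_{(k)}\bigr)$ at a diagonal $D=[\,\tilde D\mid 0\,]$ with $\tilde D=\diag(\sigma_1 I_{k_1},\dots,\sigma_u I_{k_u})$. Differentiating the parametrization of $\mathcal S_{(k)}$ at $\tilde D^2=\diag(\sigma_i^2 I_{k_i})$ one finds that $T_{\tilde D^2}\mathcal S_{(k)}$ consists of those Hermitian (resp.\ symmetric) $H$ whose $i$th diagonal $k_i\times k_i$ block is a real scalar multiple of $I_{k_i}$, the off-diagonal blocks being unconstrained precisely because $\sigma_i^2\ne\sigma_j^2$. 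Writing $\dot A=[\,C\mid C''\,]$ with $C$ the $n\times n$ block of the first $n$ columns and $C=(B_{ij})$ in $k_i\times k_j$ blocks, one computes $D\Phi_D(\dot A)=C\tilde D+\tilde D C^*$, whose $(i,j)$ block is $\sigma_j B_{ij}+\sigma_i B_{ji}^*$. The last $m-n$ columns $C''$ do not occur, so they are free; for $i\ne j$ there is no condition; and for $i=j$ the requirement $\sigma_i(B_{ii}+B_{ii}^*)\in\R\,I_{k_i}$ is equivalent to $B_{ii}=\lambda_i I_{k_i}+A_i$ with $\lambda_i\in\R$ and $A_i$ anti-symmetric (skew-Hermitian over $\C$, skew-symmetric over $\R$). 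This is exactly the stated description of $T_D\CP_{(k)}$. Alternatively one may avoid $\Phi$ and argue $G$-equivariantly: every orbit of $G=\U_n\times\U_m$ (resp.\ $\mathbb O_n\times\mathbb O_m$) meets $\CD_{(k)}$, and near $D$ one has $T_D\CP_{(k)}=\{\dot U D+\dot D-D\dot V:\ \dot U\in\A_n,\ \dot V\in\A_m,\ \dot D\in T_D\CD_{(k)}\}$, which after the same block bookkeeping---here one solves a $2\times2$ linear system with determinant $\sigma_i^2-\sigma_j^2$ for the off-diagonal contributions---gives the same answer and the same codimension.

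The step I expect to be the real obstacle is the assertion that $\mathcal S_{(k)}$ (equivalently $\CP_{(k)}$) is an \emph{embedded} submanifold rather than merely the image of a smooth map: the singular-value / eigenvalue decomposition is highly non-injective---its fibres are copies of the stabilizer $\U_{k_1}\times\dots\times\U_{k_u}$ (together with $\U_{m-n}$ on the $V$-side), resp.\ the orthogonal analogue---so one must invoke compactness of the structure group (properness of the orbit map), or set up an explicit local slice, to pass from ``immersed'' to ``embedded'' and to pin down the dimension. Once that is in place everything else is linear algebra, and the distinctness $\sigma_1>\dots>\sigma_u$ does all the work: it is what forces the off-diagonal blocks of the tangent space to be unconstrained and the diagonal blocks to have the rigid form $\lambda_i I_{k_i}+A_i$.
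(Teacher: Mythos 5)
Your argument is correct and, as far as I can tell, complete, but it is a genuinely different route from the paper's. The paper works on the domain side: it applies its appendix Lemma~\ref{lem-7} to the action of $G=\U_n\times\U_m$ (resp.\ $\mathbb O_n\times\mathbb O_m$) on $\CD_{(k)}$, shows the quotient $(G\times\CD_{(k)})/\CR$ is a manifold, checks the induced map $\immersion$ into $\GL$ is an injective immersion with continuous inverse, and reads the tangent space and dimension off the fibers of the quotient map. You instead work on the range side: you factor through the Gram map $\Phi(A)=AA^*$, show $\Phi$ is a submersion onto $\SPD$, invoke the classical fact that $\mathcal S_{(k)}$ (fixed eigenvalue multiplicities) is an embedded submanifold of $\SPD$, and pull back by the preimage theorem. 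The two routes carry the same essential difficulty, just in different places: the paper must verify the three hypotheses of Lemma~\ref{lem-7} (notably the ``continuity of $\immersion^{-1}$'' step), while you must establish that $\mathcal S_{(k)}$ is embedded --- your flag-manifold parametrization argument, using compactness of $\U_n/(\U_{k_1}\times\cdots\times\U_{k_u})$ to pass from injective immersion to embedding, is exactly the same kind of step and is correct, though your remark that ``the immersion is proper once the $\mu_i$ range over a compact set'' should really be phrased as: given a convergent sequence in $\mathcal S_{(k)}$, compactness of the flag factor plus continuity of eigenvalues forces subsequential convergence of preimages, and injectivity then forces full convergence. What your approach buys: codimension follows instantly since a submersion preserves codimension of preimages, smoothness of $\sigma_i$ reduces to smoothness of $\mu_i=\tfrac1{k_i}\trace(MP_i(M))$ via Riesz projections (cleaner than the paper's factoring through the quotient), and the tangent space falls out of a one-line computation $D\Phi_D(\dot A)=C\tilde D+\tilde D C^*$. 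What the paper's approach buys: it stays entirely inside $\GL$ and requires no side manifold of symmetric/Hermitian matrices, and the quotient machinery is reused for the smoothness of $\sigma_i$. One small imprecision in both your write-up and the paper's statement: over $\C$ the $A_i$ in the tangent-space description are skew-\emph{Hermitian}, not skew-symmetric; you say this correctly in passing, and your dimension count is consistent with it.
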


\begin{proof} To prove that $\CP_{(k)}$ is a real smooth embedded submanifold of $\GL$ we use Lemma \ref{lem-7} (see the
appendix). We take
$G = \U_n \times \U_m$, $\CM = \GL$, and $\CD = \CD_{(k)}$. The group action of $G$ on $\CM$ is given by
$$(U,V,X) \in G \times \GL \ra UXV^* \in \GL.$$
Under this action, the image of $\CD_{(k)}$ is $\CP_{(k)}$. \thirdrev{Define the equivalence relation 
$\CR$ in $\mathbb U_n \times \mathbb U_m  \times \CD_{(k)}$ by
\[
(U,V,D) \CR (U',V',D') \text{ if and only if } UDV^* = U'D'V'^*
.
\]
Since $D$ is diagonal}{Remark 22} this is equivalent to
$$D'=D, \ \ U'=UM, \ \ V'=VM_W,$$
where $M$ and $M_W$ are unitary block-diagonal matrices
$$M = \diag ( U_1, \ldots , U_u), \ \ M_W = \diag ( U_1, \ldots , U_u, W),$$
with $U_i \in \U_{k_i}$, and $W \in \U_{m-n}$. Note that the set $\CI_{(k)}$ of such pairs $(M,M_W)$ is the isotropy
group of any \thirdrev{$D \in \CD_{(k)}$.}{Remark 23}
Also, the relation $\CR$ is invariant under left 
\rev{$\U_n \times \U_m$}{mi22} action, namely:
\[
(U,V,D) \CR (U',V',D') \Leftrightarrow 
(QU,RV,D) \CR (QU',RV',D') 
\]
for any \thirdrev{$(Q,R) \in \U_n \times \U_m$}{Remark 24}.

It is easy to see that the graph of this equivalence relation, that is the set of pairs $((U,V,D), (UM,VM_W,D))$, with
$U$, $V$, $D$, $M$, 
and $W$ as before, is a closed submanifold in $(G \times \CD_{(k)}) \times (G \times \CD_{(k)})$. 
\thirdrev{Indeed, this graph is the image of the diffeomorphic embedding
\[
\begin{matrix}
G \times \CD_{(k)} \times \mathbb U_{(k)}
\times \mathbb( U_{(k)}\times\mathbb{U}_{m-n})&\rightarrow&(G \times \CD_{(k)}) \times (G \times \CD_{(k)})\\
((U,V),D,M,M_W)&\mapsto&((U,V,D),(UM,VM_W,D))\end{matrix}
\]
($\mathbb U_{(k)} = \mathbb U_{k_1} \otimes \cdots \otimes
\mathbb U_{k_u}$ are the unitary block-diagonal matrices).}{Remark 25}

Thus the quotient
space 
$(G \times \CD_{(k)}) / \CR$ is equipped with a unique manifold structure making $\pi$ (the canonical surjection) a
submersion. 

Let us define 
$$\immersion : (G \times \CD_{(k)}) / \CR \ra \GL, \ \ \immersion (\pi (U,V,D)) = UDV^*.$$
\rev{The injectivity of $\immersion$ follows by construction of $\CR$: 
elements of 
$(G \times \CD_{(k)})/ \CR$ are represented non-uniquely by elements
$(U,V,D) \in (G \times \CD_{(k)})$. Two of those 
elements (say $(U,V,D)$ and $(U',V',D')$) represent the same
equivalence class if and only if $UDV^* = U'D'V'^*$.}{mi25-26}

We \rev{still}{mi25} have to check that this map is an immersion. For any $(\dot U, \dot V, \dot D)$ in the tangent space $T_{(U,V,D)} G
\times \CD_{(k)}$ we have 
$$D (\immersion \circ \pi) (U,V,D)(\dot U, \dot V, \dot D) = \dot U D V^* + U \dot D V^* + U D \dot V^* = U(AD + \dot D
- DB)V^*$$
with $\dot U = UA$, $\dot V = VB$, $A$ and $B$ skew-symmetric matrices of respective size $n$ and $m$. When $AD + \dot D
- DB = 0$, we obtain,
via an easy computation, 
$$\dot D = 0, \ \ A = \diag (A_1, \ldots , A_u), \ \ B = \diag (A_1, \ldots , A_u, C),$$
where $A_i$ and $C$ are skew-symmetric matrices of respective sizes $k_i$ and $m-n$. Thus $(\dot U, \dot V, \dot D) =
(UA, VB, 0)$ is tangent to the fiber
of $\pi$ in $G \times \CD_{(k)}$ above $\pi (U, V, D)$ so that $D\pi(U,V,D)(\dot U, \dot V, \dot D) = 0$. In other words
$$D \immersion (\pi (U,V,D))(D\pi(U,V,D)(\dot U, \dot V, \dot D)) = 0 \Longrightarrow D\pi(U,V,D)(\dot U, \dot V, \dot
D) = 0$$
that is $D \immersion (\pi (U,V,D))$ is injective. 

The last point to check to apply Lemma \ref{lem-7} is the continuity of the inverse of $\immersion$. Suppose that $X_p
\ra X$ with $X_p, X \in \im \immersion = \CP_{(k)}$. 
We can write them $X_p = U_p D_p V_p^*$ and $X = U D V^*$. Let $(U_{p_q}, V_{p_q})$ be a subsequence which converges to
$(\tilde U, \tilde V)$ ($G$ is compact). 
Since $X_{p_q} \ra X$ we have $D_{p_q} \ra \tilde U^* X \tilde V = \tilde D,$ and $\tilde U \tilde D \tilde V^* = U D
V^*$. 
Now we consider the sequence $\tilde U^* X_p \tilde V$. It is a
convergent sequence, hence it has a unique limit $\tilde D$
and $(\tilde U, \tilde V, \tilde D) \CR (U,V,D)$.
Thus, $\pi(\tilde U^* U_p, \tilde V^* V_p, D_p)$
converges to $\pi(I,I,D)$. By left 
$\U_n \times U_m$ action, we conclude that
$\pi(U_p, V_p, D_p)$ converges to $\pi (U, V, D)$ as required.

Thus, the hypothesis of Lemma \ref{lem-7} is satisfied and $\CP_{(k)}$ is a real smooth embedded submanifold of $\GL$. 

The computation of its dimension is easy:
it is given by the difference of the dimension of $G \times \CD_{(k)}$ and the dimension of the fiber above any point in
the quotient space, 
that is 
$$\dim \U_{n} + \dim \U_{m} + u - \dim \U_{k_1} - \ldots - \dim \U_{k_u} - \dim \U_{m-n}.$$

The tangent space $T_D \CP_{(k)}$, $D = \diag(\si_1 I_{k_1}, \ldots , \si_u I_{k_u})$, is the image of the tangent space
 $T_{(I_n,I_m,D)} G \times \CD_{(k)}$
by the derivative $D(\immersion \circ \pi)(I_n,I_m,D)$. It is the set of matrices $AD + \dot D - DB$ with $\dot D =
\diag (\la_1 I_{k_1}, \ldots , \la_u I_{k_u})$, $A$ and $B$ skew symmetric of sizes $n$ and $m$.
They all have the type
described in Proposition \ref{prop-1} and this space of matrices has the right dimension. 

Let us prove the smoothness of the map $X \in \CP_{(k)} \ra \si_i(X) \in \R$. Since the map 
$(U,V,D) \in G \times \CD_{(k)} \ra \si_i(D)$ is smooth, and constant in the equivalence classes, the map
$\pi(U,V,D) \in (G \times \CD_{(k)})/\CR \ra \si_i(D)=\si_i(UDV^*)$ is also smooth. Thus the map 
$X=UDV^* \in \CP_{(k)} \ra \si_i(X)$ is smooth as the composition of the previous map by $\immersion^{-1}$.
\end{proof}

\rev{
\begin{lemma}\label{lem-svd} Let $I$ be an open interval.
Let $(\gamma(t))_{t \in I}$ be a smooth path in $\CP_{(k)}$. Then,
there are smooth paths $U(t) \in \U_n$, $V(t)\in U_m$ and $\Sigma(t) \in
\CD_{(k)}$ so that
\begin{equation}\label{eqsvd}
\gamma(t) = U(t) \Sigma(t) V(t)^* 
\end{equation}
for all $t \in I$.
\end{lemma}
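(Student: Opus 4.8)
The plan is to construct the smooth SVD path locally first, then patch. Since $\CP_{(k)}$ is an embedded submanifold of $\GL$ (Proposition~\ref{prop-1}) and, crucially, it was realized there as the image of the submersion $\immersion \circ \pi$ from a quotient $(G \times \CD_{(k)})/\CR$, the most economical route is to exploit that construction directly. The map $\immersion : (G \times \CD_{(k)})/\CR \ra \CP_{(k)}$ is a diffeomorphism (it was shown to be an injective immersion with continuous inverse, hence, onto its image with the embedded structure, a diffeomorphism). Therefore $\gamma(t) \in \CP_{(k)}$ lifts canonically to a smooth path $\tilde\gamma(t) = \immersion^{-1}(\gamma(t))$ in the quotient manifold. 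Since $\pi : G \times \CD_{(k)} \ra (G \times \CD_{(k)})/\CR$ is a submersion, it admits local smooth sections: around each point of $\tilde\gamma(I)$ there is a neighborhood $W$ and a smooth map $s : W \ra G \times \CD_{(k)}$ with $\pi \circ s = \mathrm{id}$. Composing, on the corresponding subinterval we get a smooth path $t \mapsto s(\tilde\gamma(t)) = (U(t), V(t), \Sigma(t)) \in G \times \CD_{(k)}$ satisfying $U(t)\Sigma(t)V(t)^* = \gamma(t)$, which is exactly \eqref{eqsvd} locally.

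The remaining issue is to upgrade a family of local smooth lifts to one global smooth lift on all of $I$. The standard device: cover $I$ by open subintervals $I_j$ on each of which a smooth SVD $(U_j,V_j,\Sigma_j)$ exists; on an overlap $I_j \cap I_{j+1}$, two such lifts differ by an element of the isotropy group $\CI_{(k)}$, i.e. $U_{j+1} = U_j M$, $V_{j+1} = V_j M_W$, $\Sigma_{j+1} = \Sigma_j$, where $(M, M_W)$ depends smoothly on $t$ and takes values in the (compact) group of block-diagonal unitaries $\diag(U_1,\dots,U_u)$, $\diag(U_1,\dots,U_u,W)$. Since $I$ is an interval, we can proceed inductively from left to right: having fixed the lift on $I_1 \cup \dots \cup I_j$, adjust $(U_{j+1}, V_{j+1}, \Sigma_{j+1})$ on $I_{j+1}$ by right-multiplying by the (constant-in-$t$, chosen at one point of the overlap) isotropy element that makes it agree with the already-constructed lift at a single point $t_* \in I_j \cap I_{j+1}$; because both lifts project to $\gamma$ on the whole overlap and the fiber is discrete modulo the connected isotropy group... — more carefully, because any two smooth lifts over a connected set differ by a smooth isotropy-valued function, matching them at one point via right translation by that function's value there forces agreement, by smoothness of the transition and connectedness of the overlap, on a neighborhood, and then a continuation/uniqueness argument extends the agreement across the whole overlap. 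Gluing then produces $U, V, \Sigma$ smooth on $I_1 \cup \dots \cup I_{j+1}$, and since any compact subinterval of $I$ is covered by finitely many $I_j$, this yields the desired global smooth $(U(t), V(t), \Sigma(t))$.

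The main obstacle is precisely this patching step: the SVD is not unique (the isotropy group $\CI_{(k)}$ is positive-dimensional whenever some $k_i > 1$ or $m > n$), so the local lifts genuinely disagree, and one must argue that the disagreements are smooth $\CI_{(k)}$-valued and can be coherently absorbed. The cleanest way to phrase this cleanly is to observe that $\gamma$ defines a principal $\CI_{(k)}$-bundle pullback: $\gamma^*(G \times \CD_{(k)}) \ra I$ is a principal bundle over the interval $I$ with structure group $\CI_{(k)}$, and every principal bundle over a contractible base (in particular over an interval) is trivial, hence admits a global smooth section. That section is exactly the sought smooth path $(U(t), V(t), \Sigma(t))$, and \eqref{eqsvd} holds by construction. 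This bundle-triviality argument sidesteps the inductive gluing bookkeeping entirely and is the approach I would ultimately write up.
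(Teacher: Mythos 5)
Your final argument (the principal-bundle one) is correct, but it takes a genuinely different route from the paper. The paper proceeds concretely: it differentiates the desired identity $\gamma = U\Sigma V^*$, solves block-by-block for $A = U^*\dot U$, $B = V^*\dot V$, and $\dot\Sigma$ in terms of $M = U^*\dot\gamma V$ and the $\sigma_i$'s (using the gaps $\sigma_i \neq \sigma_j$ to invert), thereby exhibiting $(U,V,\Sigma)$ as the solution of an explicit smooth ODE on $\U_n\times\U_m\times\CD_{(k)}$; global existence on $I$ then follows from compactness of the unitary factors together with the fact that $\gamma(t)\in\CP_{(k)}$ forces $\Sigma(t)$ to stay in $\CD_{(k)}$, so the maximal interval of existence is open and closed in $I$. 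You instead observe that Proposition~\ref{prop-1} already exhibits $\CP_{(k)}$ as diffeomorphic to the base of the principal $\CI_{(k)}$-bundle $\pi : G\times\CD_{(k)} \to (G\times\CD_{(k)})/\CR$ (the $\CI_{(k)}$-action is free and proper since $\CI_{(k)}$ is a compact subgroup of $G$ acting on the right of the $G$-factor), pull this bundle back along $\gamma$ to get a principal bundle over the contractible base $I$, and invoke triviality to get a global smooth section. Both are sound; your version is shorter and more conceptual but leans on bundle theory, while the paper's has the virtue of being self-contained and of producing explicit evolution equations for $U, V, \Sigma$. One caveat on your write-up: the middle paragraph's hand-crafted gluing argument (adjusting by a constant isotropy element chosen at one point of each overlap) does \emph{not} work as stated, because two local lifts over a connected overlap differ by a smooth, generally non-constant $\CI_{(k)}$-valued function when $\CI_{(k)}$ has positive dimension (some $k_i>1$ or $m>n$), so matching at a single point does not force global agreement. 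You correctly flag this and then replace it by the bundle-triviality argument; in a final write-up you should simply delete the ad hoc gluing attempt and go straight to the pullback bundle over $I$.
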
}{ma11}
\thirdrev{We give two quite different proofs of this result.}{ADDED TEXT: Alternative proof of the same result}
\begin{proof}
\thirdrev{Consider the mapping $\pi:\mathbb{U}_n\times\mathcal{D}_{(k)}\times\mathbb{U}_m\rightarrow\mathcal{P}_{(k)}$ sending $(U,D,V)$ to $UDV^*$. Note that $\pi$ is surjective. We claim that it is also a submersion: by unitary invariance, we may assume that $U=I_n$, $V=I_m$. Then, for skew-symmetric matrices $A,B$ of respective sizes $n,m$, we have:
\[
D\pi(I,D,V)(A,\dot D,B)=AD+\dot D+DB.
\]
It is a simple exercise to check that one can get any matrix in the tangent space $T_D\mathcal{P}_{(k)}$, computed in Proposition \ref{prop-1}, by choosing appropriate $A,B$. Thus, $D\pi(I,D,I)$ is surjective, and $\pi$ is a submersion.}{ADDED TEXT}

\thirdrev{Finally, we claim that $\pi$ is also a proper map (i.e. the preimage of a compact set is a compact set): let $K\subseteq\mathcal{P}_{(k)}$ be a compact subset. The mapping sending a matrix to its (ordered) singular values is continuous, and hence the set of singular values of matrices in $K$ is the continuous image of a compact set, thus a compact set, call it $K'\subseteq\mathcal{D}_{(k)}$. Thus, $\pi^{-1}(K)$ is a closed (because $\pi$ is continuous) subset of the compact set $\mathbb{U}_n\times K'\times\mathbb{U}_m$, thus a compact set. This proves that $\pi$ is proper.}{ADDED TEXT}

\thirdrev{A theorem by Ehresmann \cite{Ehresmann1951} (see \cite[Th. 5.1]{Rabier1997} for a general version on a more modern framework) says that, under these hypotheses, $\pi$ is actually a locally trivial fibration which implies that it defines a fiberbundle. Hence, $\pi$ has the homotopy lifting property and in particular any path in $\mathcal{P}_{(k)}$ can be smoothly lifted to a path in $\mathcal{U}_n\times\mathcal{D}_{(k)}\times\mathcal{U}_m$ as wanted.}{ADDED TEXT}

\end{proof}
\thirdrev{As an alternative, we have:}{ADDED TEXT}
\begin{proof}
\rev{
We will show that $U(t)$, $V(t)$ and $\Sigma(t)$ are solutions
of a certain differential equation on the manifold 
$\U_n \times \U_m \times \CD_{(k)}$.
An important fact to be used below is that $T_I\U_n$ is
the space of skew-hermitian matrices. In the real case,
$T_I\mathbb O_n$ is the space of skew-symmetric matrices. 
Let us assume for a while that \eqref{eqsvd} admits a solution.
Differentiating \eqref{eqsvd}
with respect to $t$, we obtain after a few trivial manipulations that
\[
U(t)^* \dot \gamma(t) V(t) = 
U(t)^* \dot U(t) \Sigma(t)
- \Sigma(t) V(t)^* \dot V(t) 
+ \dot \Sigma(t)
.
\]
For shortness, let $M(t) = 
U(t)^* \dot \gamma(t) V(t)$, $A(t)= U(t)^* \dot U(t) \in T_I\U_n$
and $B(t) = V(t)^* \dot V(t) \in T_I\U_m$. We have now:
\[
M(t) = A(t) \Sigma(t) - \Sigma(t) B(t) + \dot \Sigma(t) .
\]
Using block notation, we obtain for $i<j$ that
\[
M_{ij}(t) 
= \sigma_j(t) A_{ij}(t) - \sigma_i(t) B_{ij}(t) .
\]
The equation for block $M_{ji}(t)$ reads:
\[
M_{ji}(t) = \sigma_i(t) A_{ji}(t) - \sigma_j(t) B_{ji}(t) .
\]
Transposing,
\[
M_{ji}(t)^* = -\sigma_i(t) A_{ij}(t) + \sigma_j(t) B_{ij}(t) .
\]
We obtain therefore
\begin{equation}\label{svd-non-diag}
\left\{
\begin{array}{lcl}
A_{ij} &=& \frac{ 1}{\sigma_j^2 - \sigma_i^2} 
\left(\sigma_j M_{ij}(t) + \sigma_i M_{ji}(t)^* \right)
\\
B_{ij} &=& \frac{ 1}{\sigma_j^2 - \sigma_i^2} 
\left(\sigma_i M_{ij}(t) + \sigma_j M_{ji}(t)^* \right)
\end{array}
\right.
\end{equation}
The blocks in the diagonal (that is, $i=j$) are of the form
\[
M_{ii}(t)  = \sigma_i (A_{ii} - B_{ii}) + \dot \sigma_i I_{k_i},
\]
hence we can solve by setting
\begin{equation}\label{svd-diag}
A_{ii} = -B_{ii} = \frac{1}{2 \sigma_i} (M_{ii}(t) - \dot \sigma_i I_{k_i}) .
\end{equation}
Equations \eqref{svd-non-diag}-\eqref{svd-diag} are a system of
smooth non-autonomous ordinary differential equations
in variables $U \in \U_n, V \in \U_m$ and $\Sigma \in \CD_{(k)}$.
The Lipschitz condition holds.
Hence, for every $t_0 \in I$, 
there are $\epsilon > 0$ and local solutions $U(t)$, $V(t)$ and
$\Sigma(t)$ for $t \in (t_0-\epsilon, t_0+\epsilon)$, solving
\eqref{eqsvd}.
}{ma11}

\rev{In order to show the existence of a global solution on all
the interval, we need to check that as $t \rightarrow t_0+\epsilon$,
the solution converges to
a limit in $\U_n \times \U_m \times \CD_{(k)}$. The convergence
of $U(t)$ and $V(t)$ follows from compactness of the unitary
group.
Because $\gamma(t_0+\epsilon) \in \CP_{(k)}$,
\[
\lim_{t \rightarrow t_0 + \epsilon} \Sigma(t) \in \CD_{(k)} . 
\] 
Hence, the solution $(U(t), V(t), \Sigma(t))$ can be extended
to an interval that is open and closed in $I$, hence to all $I$.
}{ma11}
\end{proof}

\rev{Let $\alpha:\GL$ be defined by
$\alpha(A) = \sigma_n(A)^{-2}$. We also denote by 
$\al=\sigma_u^{-2}$ its restriction to $\CP_{(k)}$ or to
$\CD_{(k)}$.
We \thirdrev{first}{Remark 26} consider the case of diagonal matrices, then
we prove self-convexity of $\alpha$ in $\CP_{(k)}$.}{mi23}

\begin{lemma}\label{lem-4}
Let $\CP_{(k)}$ be equipped with the condition metric structure 
$$\left\langle \cdot , \cdot \right\rangle_\ka = \si_u^{-2} {\Re}\left\langle \cdot , \cdot \right\rangle_F.$$
\begin{enumerate}
\item If $\Sigma_1,\Sigma_2\in\Dk $, then any minimizing condition geodesic in $\Pk$ joining $\Sigma_1$ and $\Sigma_2$
lies in $\Dk $, 
\item The set $\Dk $ is 
\rev{a totally geodesic submanifold of $\CP_{(k)}$ for the condition metric}{ma11}, namely, every
geodesic in $\Dk $ for the induced structure is also a geodesic in $\CP_{(k)}$, or equivalently:
\item If $\Sigma \in \Dk $ and $\dot{\Sigma}\in T_\Sigma\Dk $, then the unique geodesic in $\CP_{(k)}$ through $\Sigma$
with tangent vector 
$\dot{\Sigma}$ at $\Sigma$, remains in $\Dk $.
\end{enumerate}
Moreover, $\al = \sigma_u^{-2}$ is log-convex in $\Dk $.
\end{lemma}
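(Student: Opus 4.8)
The plan is to use two complementary facts: that the singular values $\si_1 > \dots > \si_u$ are \emph{smooth} functions on $\CP_{(k)}$ (Proposition~\ref{prop-1}), so that the condition metric $\si_u^{-2}\Re\langle\cdot,\cdot\rangle_F$ is an honest smooth Riemannian metric on $\CP_{(k)}$ --- in particular geodesics of $\CP_{(k)}$ are uniquely determined by their initial position and velocity --- and that $\CD_{(k)}$ is simultaneously a fixed-point component of a group of isometries and a metric retract of $\CP_{(k)}$.

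First I would prove statements (2) and (3) (they are equivalent reformulations) by symmetry. The group $H_0$ of pairs $(\diag(W_1,\dots,W_u),\,\diag(W_1,\dots,W_u,W_0))$ with $W_i$ unitary (orthogonal if $\K = \R$) of size $k_i$ and $W_0$ of size $m-n$ acts on $\GL$ by isometries of the condition metric and fixes every point of $\CD_{(k)}$; when $\K=\C$ one adjoins the isometry $A \mapsto \bar A$ (entrywise conjugation, which preserves $\Re\langle\cdot,\cdot\rangle_F$ and the singular values and fixes $\CD_{(k)}$ pointwise). A short Schur's-lemma computation, together with the description of $T_\Sigma \CP_{(k)}$ in Proposition~\ref{prop-1}, shows that the common $+1$-eigenspace of the differentials of all these isometries on $T_\Sigma \CP_{(k)}$ is exactly $T_\Sigma \CD_{(k)}$, and that their common fixed-point set meets $\CP_{(k)}$ in the real matrices of the form $\diag(c_1 I_{k_1},\dots,c_u I_{k_u})$ with $|c_1|,\dots,|c_u|$ distinct, whose connected component through any point of $\CD_{(k)}$ is $\CD_{(k)}$ itself. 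Now if $\gamma$ is the $\CP_{(k)}$-geodesic with $\gamma(0)\in\CD_{(k)}$ and $\dot\gamma(0)\in T_{\gamma(0)}\CD_{(k)}$, then every such isometry $g$ fixes both $\gamma(0)$ and $\dot\gamma(0)$, hence $g\circ\gamma = \gamma$ by uniqueness of geodesics, so $\gamma$ stays in the fixed-point set and, being connected, in $\CD_{(k)}$. This proves (3), hence (2).

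Next, statement (1). Here I would introduce the smooth retraction $q:\CP_{(k)} \to \CD_{(k)}$, $q(A) = \diag(\si_1(A)I_{k_1},\dots,\si_u(A)I_{k_u})$, which restricts to the identity on $\CD_{(k)}$. Writing a tangent vector $\dot A \in T_A\CP_{(k)}$ in a singular value frame $A = U\Sigma V^*$ via Lemma~\ref{lem-svd}, one obtains $\dot\si_i = \tfrac1{k_i}\Re\,\mathrm{tr}(M_{ii})$ with $M = U^*\dot A V$, so Cauchy--Schwarz gives $\sum_i k_i\dot\si_i^2 \le \sum_i \|M_{ii}\|_F^2 \le \|M\|_F^2 = \|\dot A\|_F^2$; since $q$ preserves $\si_u$, this shows $\|dq_A(\dot A)\|_\ka \le \|\dot A\|_\ka$, i.e. $q$ is distance-non-increasing for the condition metrics. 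Consequently the condition distance inside $\CD_{(k)}$ agrees with the one inside $\CP_{(k)}$ on $\CD_{(k)}$, and if $\gamma$ is a minimizing condition geodesic of $\CP_{(k)}$ between $\Sigma_1,\Sigma_2\in\CD_{(k)}$ then $q\circ\gamma$ has exactly the same length, which forces all the above Cauchy--Schwarz inequalities to be equalities along $\gamma$: the off-diagonal blocks of $U^*\dot\gamma V$ vanish and each diagonal block is a real scalar multiple of the identity. Feeding this back into the differential equations for $U,V,\Sigma$ in the proof of Lemma~\ref{lem-svd} shows that $U$ and $V$ stay block-diagonal of the same block type, so $\gamma(t) = U(0)\Sigma(t)V(0)^*$ with $\Sigma(t)\in\CD_{(k)}$; comparing the two singular value decompositions of $\gamma(0)\in\CD_{(k)}$ forces $(U(0),V(0))$ into the isotropy group $H_0$, and hence $\gamma(t) = \Sigma(t)\in\CD_{(k)}$ for all $t$. (Conversely, (2) and (3) also follow from (1) by a local argument using that $\CD_{(k)}$ is closed in $\CP_{(k)}$ and that the metric is smooth there, so one of the two treatments of (2)--(3) could be omitted.)

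Finally, for log-convexity of $\al = \si_u^{-2}$ on $\CD_{(k)}$: in the coordinates $(\si_1,\dots,\si_u)$ on the region $\si_1>\dots>\si_u>0$ the induced condition metric is $\si_u^{-2}\sum_{i=1}^u k_i\,d\si_i^2$, and a direct computation of its Christoffel symbols (only the $\Gamma^{\si_u}_{\si_i\si_i}$ are nonzero among those that contribute, with sign making $D^2(\log\si_u)$ negative semi-definite) yields that $D^2(\log\al) = -2\,D^2(\log\si_u)$ is positive semi-definite; equivalently this follows from Corollary~\ref{cor-1} applied to the translation group in the $\si_1,\dots,\si_{u-1}$ directions, which acts by isometries of $\CD_{(k)}$, leaves $\al$ invariant, and has Killing fields of constant Frobenius norm. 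The main obstacle is the equality analysis in the proof of (1): the work is not the distance-non-increasing estimate itself but extracting from $\|dq(\dot\gamma)\|_\ka = \|\dot\gamma\|_\ka$ that $\gamma$ actually lies in $\CD_{(k)}$ (and not merely that its image under $q$ has full length), which is where one needs the smooth singular value decomposition of Lemma~\ref{lem-svd} and careful bookkeeping of the induced differential equations.
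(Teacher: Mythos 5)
Your proof is correct, and parts of it take a genuinely different route from the paper.

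For item (1), your argument is at bottom the same computation as in the paper: along a smooth SVD path $\gamma(t)=U_t\Sigma_tV_t^*$ (Lemma~\ref{lem-svd}), the diagonal part $\dot\Sigma_t$ is $F$-orthogonal to $A_t\Sigma_t - \Sigma_t B_t$ because the former has real diagonal entries while the latter has purely imaginary ones, giving $\|\dot\gamma_t\|_F \ge \|\dot\Sigma_t\|_F$. You repackage this as ``the smooth retraction $q(A)=\diag(\sigma_1(A)I_{k_1},\dots,\sigma_u(A)I_{k_u})$ is $1$-Lipschitz for the condition metric and fixes $\CD_{(k)}$'' via a Cauchy--Schwarz estimate on the blocks of $M=U^*\dot A V$; that is an equivalent formulation (note that $dq_{\gamma(t)}(\dot\gamma(t))=\dot\Sigma_t$) and your equality analysis --- vanishing of the off-diagonal blocks and each $M_{ii}$ a real scalar times the identity, then feeding this back into the ODE of Lemma~\ref{lem-svd} to get $A\equiv B\equiv 0$ --- is exactly the work the paper hides in its phrase ``when $\gamma_t$ does not belong to $\CD_{(k)}$ the inequality is strict.''

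Where you genuinely diverge is in items (2)--(3). The paper obtains them from (1) together with the classical characterization of totally geodesic submanifolds (citing O'Neill). You instead give a direct fixed-point argument: the isotropy subgroup $H_0$ of block-diagonal unitaries (plus entrywise conjugation when $\K=\C$) acts by isometries of the condition metric on $\CP_{(k)}$; its fixed-point set meets $\CP_{(k)}$ in $\{\diag(c_1I_{k_1},\dots,c_uI_{k_u}): c_i\in\R\}$, of which $\CD_{(k)}$ is a connected component, and its common $+1$-eigenspace at any $\Sigma\in\CD_{(k)}$ is exactly $T_\Sigma\CD_{(k)}$; uniqueness of geodesics for the smooth metric on $\CP_{(k)}$ then forces any geodesic with initial data in $\CD_{(k)}\times T\CD_{(k)}$ to stay in the fixed-point component. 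This is self-contained, avoids the external reference, and --- since it does not depend on (1) --- lets you treat (1) and (2)--(3) as independent facts; the price is having to identify the full fixed-point set and check the conjugation isometry for $\K=\C$. For the final log-convexity claim both you and the paper compute, in effect, the Hessian of $\log\sigma_u^{-2}$ for the metric $\sigma_u^{-2}\sum k_i\,d\sigma_i^2$; your direct Christoffel-symbol calculation (or the Corollary~\ref{cor-1} variant via translation Killing fields in the $\sigma_1,\dots,\sigma_{u-1}$ directions, which is valid since $\|K\|^2$ is constant so only the $\|K\|^2\|\grad\alpha\|^2\ge 0$ term survives) replaces the paper's appeal to the inequality from Proposition~3 of \cite{BDMS}, but the two are computing the same quantity.
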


\begin{proof} According to Proposition \ref{prop-1}, $\CP_{(k)}$ is a smooth Riemannian manifold for the condition
structure. 

Let $\gamma(t)$, $0 \le t \le T,$ be a minimizing condition geodesic with endpoints $\Sigma_1$ and $\Sigma_2 \in
\CD_{(k)}$. 
Let $\gamma(t)=U_t\Sigma_tV_t^*$ be a singular value decomposition of $\gamma(t)$, \rev{choosen as in Lemma~\ref{lem-svd}}{ma11}.
Let $\si_u(t)$ be the smallest singular value of $\gamma(t)$. 
It suffices to see that $L_\ka(\Si) \le L_\ka(\ga)$ that is
$$\int_0^T \|\dot{\Sigma}_t\|_F \si_u(t)^{-1} dt \le \int_0^T \|\dot{\ga}_t\|_F \si_u(t)^{-1} dt.$$
Since
$$\dot{\ga}_t = \dot{U}_t\Sigma_tV_t^* + U_t\dot{\Sigma}_tV_t^* + U_t\Sigma_t\dot{V}_t^*,$$
with $\dot{U}_t = U_tA_t$, \secrev{$\dot{V}_t = V_tB_t$}{C21}, $A_t$ and $B_t$ skew-symmetric, we see that
$$ \|\dot{\ga}_t\|_F^2 =  
\| {A}_t\Sigma_t + \dot{\Sigma}_t - \Sigma_t {B}_t \|_F ^2 =
\| \dot{\Sigma}_t \|_F^2 +
 \| {A}_t\Sigma_t - \Sigma_t {B}_t \|_F^2
\ge \| \dot{\Sigma}_t \|_F^2
$$
because the diagonal terms in $\dot{\Sigma}_t$ are real numbers and those of 
${A}_t\Sigma_t - \Sigma_t {B}_t$ are purely imaginary \rev{when
$\K=\C$ and vanish when $\K=\R$.}{mi27} 
\secrev{When $\gamma_t$ does not belong to $\CD_{(k)}$, then the
inequality above is strict.}{C-I-4}

The second assertion is an easy consequence of the first one\thirdrev{.}{Remark 27} The third assertion is another classical characterization
of totally geodesic submanifolds, see \cite{one} Chapter 4, 
\rev{Proposition 13 or Theorem 5.}{mi28}

Finally, for log-convexity of $\al(X) = \si_u(X)^{-2}$, using \cite{BDMS} Proposition 3, it suffices to see that for
$\Sigma\in\Dk $ 
and $\dot{\Sigma}\in T_\Sigma\Dk $,
\begin{equation}\label{eq:pot}
2\|\dot{\Sigma}\|^2\|D\sigma_u(\Sigma)\|^2\geq \rev{\hess{\sigma_u^2(\Sigma)}(\dot{\Sigma},\dot{\Sigma})}{ma10},
\end{equation}
where the second derivative is computed in the Frobenius metric structure. 
\rev{
Now,
\[
D(\sigma_u)(\Sigma)(\dot \Sigma) = \sigma_u(\dot \Sigma) .
\]
Thus
\[
\left\|
D(\sigma_u)(\Sigma)(\dot \Sigma)\right\|^2
\]
is maximized for the `unit vector' (in block representation)
\[
\dot \Sigma =
\frac{1}{\sqrt{k_u}} 
\left[
\begin{matrix}
0_{k_1} & & &\\
& \cdots & &\\
& & 0_{k_{u-1}} & \\
& & & I_{k_u}
\end{matrix}
\right] .
\]
We deduce that
\[
\|D\sigma_u(\Sigma)\|^2=\frac{1}{k_u},
\]
hence
\[
2\|\dot{\Sigma}\|^2\|D\sigma_u(\Sigma)\|^2
=
\frac{ 2\|\dot{\Sigma}\|^2} {k_u} \ge 2 \sigma_u(\Sigma).
\]
The right-hand-side of \eqref{eq:pot} is
precisely
\[
\hess{\sigma_u^2(\Sigma)}(\dot{\Sigma},\dot{\Sigma})
=
D(2\sigma_u(\Sigma)\sigma_u(\dot \Sigma)) (\dot{\Sigma})
=
2\sigma_u(\dot{\Sigma})^2,
\]
and equation (\ref{eq:pot}) follows.}{ma10-mi29}\index{ma10}\index{mi29}
\end{proof}

\begin{proposition}\label{prop-2}
The map $\al = \sigma_u^{-2}$ is self-convex in $\CP_{(k)}$.
\end{proposition}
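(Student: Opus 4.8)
The plan is to invoke Corollary~\ref{cor-1} with $\CM=\CP_{(k)}$, $\alpha=\sigma_u^{-2}$, and $G=\U_n\times\U_m$ (replaced by $\mathbb O_n\times\mathbb O_m$ when $\K=\R$) acting by $\ell((U,V),A)=UAV^*$. By Proposition~\ref{prop-1} this is a smooth action on a smooth Riemannian manifold; it preserves the Frobenius metric and $\sigma_u$, hence also the condition metric, and $\alpha(UAV^*)=\alpha(A)$, so $G$ is a group of symmetries of $\alpha$. Since every $A\in\CP_{(k)}$ can be written $A=UDV^*$ with $D\in\CD_{(k)}$, i.e. $A=g(D)$ for some $g\in G$, and since $g$ is an isometry for both the Frobenius and the condition metric that commutes with $\alpha$ and conjugates the flow of $\grad\alpha$, all three hypotheses of Corollary~\ref{cor-1} transform covariantly under $g$. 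Hence it suffices to verify them at an arbitrary diagonal matrix $D\in\CD_{(k)}$.

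The first step at $D$ is to pin down the orbit and its complement. Using the description of $T_D\CP_{(k)}$ in Proposition~\ref{prop-1} together with $T_DG(D)=\{AD-DB : A,B\text{ skew-Hermitian}\}$, a direct computation (already implicit in Lemmas~\ref{lem-svd} and \ref{lem-4}) shows that $T_DG(D)$ accounts for all off-diagonal blocks and the skew-Hermitian parts of the diagonal blocks, while $T_D\CD_{(k)}$ consists of the matrices $\diag(\dot\sigma_1 I_{k_1},\dots,\dot\sigma_u I_{k_u})$, and that these two subspaces are Frobenius-orthogonal with complementary dimensions; thus $T_D\CP_{(k)}=T_DG(D)\oplus T_D\CD_{(k)}$ is an orthogonal direct sum and $(T_DG(D))^\perp=T_D\CD_{(k)}$. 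Since $D\alpha(D)$ annihilates $T_DG(D)$ ($\alpha$ being constant on orbits), we get $\grad\alpha(D)\in T_D\CD_{(k)}$; therefore $\grad\alpha$ restricts to a vector field on $\CD_{(k)}$, its flow $\phi_t$ preserves $\CD_{(k)}$ locally, and $D\phi_t(D)$ carries $T_D\CD_{(k)}$ into $T_{\phi_t(D)}\CD_{(k)}=(T_{\phi_t(D)}G(\phi_t(D)))^\perp$ — this is exactly the second hypothesis of Corollary~\ref{cor-1}. The first hypothesis, that $\hessk{\log(\alpha)(D)}$ is positive semi-definite on $(T_DG(D))^\perp=T_D\CD_{(k)}$, follows from Lemma~\ref{lem-4}: $\CD_{(k)}$ is totally geodesic in $\CP_{(k)}$ for the condition metric, so the condition Hessian of $\log\alpha$ restricted to $T_D\CD_{(k)}$ coincides with the Hessian of $\log(\sigma_u^{-2})$ intrinsic to $\CD_{(k)}$, which is positive semi-definite precisely because $\sigma_u^{-2}$ is log-convex in $\CD_{(k)}$.

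The remaining — and main — task is the third hypothesis: for each $a=(a_1,a_2)\in\mathfrak g$, with infinitesimal generator $K(q)=a_1q-qa_2$ as read off from the formula for $\exp(ta)$ above, one must show
\[
\alpha\,D(\|K\|^2)(\grad\alpha)+\|K\|^2\|\grad\alpha\|^2\ge 0
\]
at $D$, all norms and gradients being taken in the Frobenius metric. Along the flow $\phi_t$ the point stays diagonal, the singular values $\sigma_1,\dots,\sigma_{u-1}$ are frozen, only $\sigma_u$ moves, and $\grad\alpha(D)=-\tfrac{2}{k_u}\sigma_u^{-3}\diag(0,\dots,0,I_{k_u})$ so that $\|\grad\alpha\|^2=\tfrac{4}{k_u}\sigma_u^{-6}$. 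Substituting and clearing the positive factor $\tfrac{k_u\sigma_u^{6}}{2}$, I expect the inequality to reduce to the clean statement
\[
2\|K(D)\|^2 \ \ge\ \sigma_u\,\frac{\partial}{\partial\sigma_u}\|K\|^2 .
\]
Expanding $\|K(D)\|_F^2$ blockwise gives a sum of terms $\|\sigma_j a_{1,ij}-\sigma_i a_{2,ij}\|_F^2$ over index pairs $(i,j)$ with $i,j\le u$, plus the contribution $\sum_i\sigma_i^2\|\cdot\|_F^2$ of the extra $m-n$ columns. The diagonal block $i=j=u$ and the extra-column block are homogeneous of degree $2$ in $\sigma_u$ and contribute equally to both sides; the blocks not involving index $u$ contribute only to the left side; and for a conjugate pair of blocks $(u,j),(j,u)$ with $j<u$ the difference of the two sides collapses — using skew-Hermiticity ($a_{1,ju}=-a_{1,uj}^*$, etc.), the Cauchy–Schwarz bound $2|\Re\langle a_{1,uj},a_{2,uj}\rangle|\le\|a_{1,uj}\|^2+\|a_{2,uj}\|^2$, and $\sigma_j>\sigma_u$ — to a manifestly nonnegative expression of the form $2\sigma_j(\sigma_j-\sigma_u)\bigl(\|a_{1,uj}\|^2+\|a_{2,uj}\|^2\bigr)$ plus a nonnegative remainder. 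Summing over all blocks yields the inequality, and the case $\K=\R$ is identical with Hermitian transpose replaced by transpose. The bookkeeping of this block computation is the only place where I anticipate real effort; with the three hypotheses in hand, Corollary~\ref{cor-1} gives that $\alpha=\sigma_u^{-2}$ is self-convex in $\CP_{(k)}$.
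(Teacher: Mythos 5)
Your proposal is correct and follows essentially the same route as the paper: reduce by unitary invariance to a point $D\in\CD_{(k)}$, invoke Corollary~\ref{cor-1} with $G=\U_n\times\U_m$, identify $(T_DG(D))^\perp=T_D\CD_{(k)}$, use Lemma~\ref{lem-4} (total geodesicity of $\CD_{(k)}$ and log-convexity of $\sigma_u^{-2}$ there) for the first hypothesis, observe that $\grad_\kappa\alpha$ preserves the diagonal form for the second, and reduce the third to a blockwise inequality for skew-Hermitian matrices. Your reduction to $2\|K(D)\|^2\ge\sigma_u\,\partial_{\sigma_u}\|K\|^2$ and the subsequent block computation are equivalent to the paper's passage through the trace identity $J\ge 0$ and Lemma~\ref{lem-5}; the pairing of the $(u,j)$ and $(j,u)$ blocks via skew-Hermiticity and Cauchy--Schwarz, yielding the nonnegative term $2\sigma_j(\sigma_j-\sigma_u)(\|a_{1,uj}\|^2+\|a_{2,uj}\|^2)$ plus a $\|a_{1,uj}-a_{2,uj}\|^2$-type remainder, is exactly the content of the paper's diagonal-entry estimate in Lemma~\ref{lem-5}.
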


\begin{proof} By unitary invariance, we may choose as initial point a matrix $\Sigma\in\Dk $ with ordered distinct
diagonal entries 
$\sigma_1 > \ldots > \sigma_u > 0.$
We use Corollary \ref{cor-1}, with the group $G$ being  $\U_n\times\U_m$ and the action
\[
\begin{matrix}
\U_n\times\U_m\times\Pk&\longrightarrow&\Pk\\
((U,V),A)&\mapsto&UAV^*.
\end{matrix}
\]
The Lie algebra of $G$ is the set $\mathcal{A}_n \times \mathcal{A}_m$ where $\mathcal{A}_k$ is the set of $k\times k$ 
skew-symmetric matrices. 

\rev{We write $G(L)$ for the $G$-orbit of a point $L \in \CP_{(k)}$. 
In our case, 
this is the manifold of all $U L V^*$ with $U \in \U_n$, $V \in \U_m$.
The tangent space to the Lie group action at $L$ is the 
tangent manifold $T_LG(L) \subseteq T_L\CP_{(k)}$.}{mi30}

First, we note that for any $L\in\Dk $, we have
\[
(T_L G(L))^\perp=T_L\left\{ULV^*:U\in\U_n,V\in\U_m\right\}^\perp=
\]
\[
\left\{B_1L+LB_2^*:(B_1,B_2)\in\mathcal{A}_n\times\mathcal{A}_m\right\}^\perp.
\]
Let us denote by $S$ this last set. We claim that $S=\Dk$. Indeed, $\Dk\subseteq S$, because the diagonal of 
any matrix of the form $B_1L+LB_2^*$ is purely imaginary and hence orthogonal to $\Dk$. The other inclusion 
is easily checked by a dimensional argument: The dimension of $\Dk$ is $u$ and the dimension of $S$ is
\[
\dim(\Pk)-\dim\left\{B_1L+LB_2^*:(B_1,B_2)\in\mathcal{A}_n\times\mathcal{A}_m\right\},
\]
that is $\dim(\Pk)$ minus the dimension of the orbit of $L$ under the action of $\U_n\times\U_m$. We have computed 
these two quantities in Proposition \ref{prop-1}, and we immediately conclude that $\dim(S)=u$, for both $\K=\C$ and
$\K=\R$. Thus,
for all $L\in\Dk$,
\[
(T_L G(L))^\perp=\Dk.
\]
We now check the three conditions of Corollary \ref{cor-1}.
\begin{itemize}
\item $\hessk{\log(\al)}(\Sigma)$ is positive semi-definite in $(T_\Si G(\Si))^\perp$: let 
$$\dot{\Sigma}\in (T_\Si G(\Si))^\perp = T_\Si \Dk ,$$
and let $\gamma$ be a condition geodesic in $\Dk $ such that $\gamma(0)=\Sigma$, $\dot{\gamma}(0)=\dot{\Sigma}$. We have
to check that
\[
\frac{d^2}{dt^2}\log\al(\gamma(t))\left.\right|_{t=0} \geq 0.
\]
This is true as $\al$ is log-convex in $\Dk $ from Lemma \ref{lem-4}.
\item We have to check that for small enough $t$, and for
\[
\dot\Si\in (T_\Si G(\Si))^\perp= T_\Si \Dk,
\]
$D\phi_t(D)\dot\Si$ belongs to 
\[
T_{\phi_t(\Sigma)}G(\Si)^\perp= T_\Si \Dk,
\]
where $\phi_t$ is the flow of ${\rm grad}_\kappa\al$. 
In our case, $\phi_t$ can be computed exactly. Indeed,
\[
{\rm grad}_\kappa \al=\frac{1}{\al}\grad\al=-\frac{2}{k_u\sigma_u}E,
\]
where
\[
E=\diag(0,\ldots,0,\overset{k_u}{\overbrace{1,\ldots,1}}).
\]
Thus, $\grad\al$ preserves the diagonal form, and $\phi_t(\Sigma)\in\Dk$ is a diagonal matrix, for every $t$ while
defined. Thus, $D\phi_t(\Sigma)(\dot{\Sigma})$ is again a diagonal matrix, for every diagonal matrix $\dot{\Sigma}$.
This proves that the second condition of Corollary \ref{cor-1} applies to our case.
\item For $(B_1,B_2)\in \mathcal{A}_n\times\mathcal{A}_m$, the vector field $K$ on $\GL$ generated by $(B_1,B_2)$ is
\[
K(A)=\frac{d}{dt}\left(e^{tB_1}Ae^{tB_2^*}\right)\left.\right|_{t=0}=B_1A+AB_2^*.
\]
Note that
\begin{enumerate}
\item $K^*$ as a linear operator on $\GL$ satisfies $K^*(A)=B_1^*A+AB_2$.
\item $\|K(\Sigma)\|^2=\|B_1\Sigma+\Sigma B_2^*\|^2$,
\item For $w\in T_\Sigma\Pk$, $D(\|K\|^2)(\Sigma)w=2\Re\langle K^*K(\Sigma),w\rangle=2\Re\langle B_1B_1^*\Sigma+\Sigma
B_2B_2^*-2B_1\Sigma B_2^*,w\rangle$.
\item $\grad\al(\Sigma)=-\frac{2}{k_u\sigma_u^3}E \text{ where } E=\diag(0,\ldots,0,\overset{k_u}{\overbrace{1,\ldots,1}})$.
\end{enumerate}
Thus,
\[
\al(\Sigma)D(\|K\|^2)(\Sigma)(\grad\al(\Sigma))+\|K(\Sigma)\|^2\|\grad\al(\Sigma)\|^2=
\]
\[
\frac{4}{k_u\sigma_u^6}\left(-\sigma_u\Re\langle B_1B_1^*\Sigma+\Sigma B_2B_2^*-2B_1\Sigma B_2^*,E
\rangle+\|B_1\Sigma-\Sigma B_2\|^2\right).
\]
Hence, it suffices to see that $J\geq0$ where
\[
J=\|B_1\Sigma-\Sigma B_2\|^2-\sigma_u\Re\langle B_1B_1^*\Sigma+\Sigma B_2B_2^*-2B_1\Sigma B_2^*, E \rangle.
\]
Expanding this expression and writing $\Sigma'=\Sigma^*-\sigma_u E^*$, we have
\[
J=\Re \left( \trace(B_1B_1^*\Sigma\Sigma')+\trace(\Sigma'\Sigma B_2B_2^*)-2\trace(B_1\Sigma B_2^*\Sigma')\right),
\]
which by Lemma \ref{lem-5} below is a non-negative quantity. The proposition follows.
\end{itemize}
\end{proof}

\begin{lemma}\label{lem-5}
Let $\Sigma = \diag (\si_1I_{k_1}, \ldots, \si_{u-1}I_{k_{u-1}}, \si_{u}I_{k_{u}})
\in \GL $ and $\Sigma' = \diag (\si_1I_{k_1}, \ldots, \si_{u-1}I_{k_{u-1}}, 0I_{k_{u}})
\in \G\L_{m,n}$. Then, for any skew-symmetric matrices $B,C$ of respective sizes $n,m$, we have:
\[
\Re \left( \trace(BB^*\Sigma\Sigma')+\trace(\Sigma'\Sigma CC^*)-2\ \trace(B\Sigma C^*\Sigma')\right)\geq0.
\]
\end{lemma}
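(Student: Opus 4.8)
The plan is to reduce the claimed inequality to a collection of elementary nonnegative quantities, one per pair of diagonal blocks, by a direct block computation. Write $J$ for the left-hand side, so $J = \Re\bigl(\trace(BB^*\Sigma\Sigma') + \trace(\Sigma'\Sigma CC^*) - 2\trace(B\Sigma C^*\Sigma')\bigr)$. Since $B$ and $C$ are skew-Hermitian ($B^* = -B$, $C^* = -C$) — they lie in the Lie algebras of the unitary groups, and in the real case this specializes to skew-symmetry — I would first write $\Sigma' = \Sigma^* - \sigma_u E^*$ with $E$ the matrix having $I_{k_u}$ in its last diagonal block and zeros elsewhere, exactly as in the proof of Proposition~\ref{prop-2}. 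Substituting and expanding a Frobenius norm, the two traces $\trace(B\Sigma C^*\Sigma^*)$ and $\trace(\Sigma C\Sigma^* B^*)$ are complex conjugates of one another, so the $\Sigma^*$-part of $J$ collapses to the genuinely nonnegative $\|B\Sigma - \Sigma C\|^2$, leaving
\[
J = \|B\Sigma - \Sigma C\|^2 - \sigma_u\,\Re\bigl(\trace(BB^*\Sigma E^*) + \trace(E^*\Sigma CC^*) - 2\trace(B\Sigma C^*E^*)\bigr) .
\]

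Next I would expand both parts in blocks along the row partition $n = k_1 + \cdots + k_u$ and the column partition $m = k_1 + \cdots + k_u + (m-n)$, writing $B = (B_{ij})_{1\le i,j\le u}$ and $C = (C_{ij})_{1\le i\le u,\,1\le j\le u+1}$. Here $\Sigma$ acts by the scalar $\sigma_i$ on the $i$-th diagonal block and sends the last $m-n$ columns to zero, while $\Sigma E^* = \sigma_u\Pi$ and $E^*\Sigma = \sigma_u\tilde\Pi$, where $\Pi = \diag(0,\dots,0,I_{k_u})$ and $\tilde\Pi$ is its zero-padded $m\times m$ version. A short computation then yields
\[
J = \sum_{i,j=1}^{u}\|\sigma_j B_{ij} - \sigma_i C_{ij}\|^2 + \sum_{i=1}^{u}\sigma_i^2\|C_{i,u+1}\|^2 - \sigma_u^2\sum_{r=1}^{u}\|B_{ur}\|^2 - \sigma_u^2\sum_{r=1}^{u+1}\|C_{ur}\|^2 - 2\sigma_u\,\Re\sum_{r=1}^{u}\sigma_r\trace(B_{ur}C_{ru}) .
\]

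I would then regroup the right-hand side by index pairs and check nonnegativity group by group. For a pair $\{i,j\}$ with $i,j<u$ (including $i=j$) only the first sum contributes, giving $\|\sigma_j B_{ij} - \sigma_i C_{ij}\|^2 \ge 0$. For the block $(u,u)$ all contributions combine into $\sigma_u^2\bigl(\|B_{uu}-C_{uu}\|^2 - \|B_{uu}\|^2 - \|C_{uu}\|^2 - 2\Re\trace(B_{uu}C_{uu})\bigr)$, which vanishes because $C_{uu}^* = -C_{uu}$. For a pair $\{i,u\}$ with $i<u$, the contributions combine, and using the skew-Hermitian identity $C_{iu} = -C_{ui}^*$ the cross terms cancel, leaving $\|\sigma_u B_{iu} - \sigma_i C_{iu}\|^2 + (\sigma_i^2 - \sigma_u^2)\|B_{ui}\|^2 \ge 0$ since $\sigma_i > \sigma_u$. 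Finally, the terms carrying the last column block combine to $\sum_{i<u}\sigma_i^2\|C_{i,u+1}\|^2 \ge 0$. As every block of $B$ and $C$ occurs in exactly one of these groups, summing gives $J \ge 0$.

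The main obstacle is the bookkeeping in the last two paragraphs: one must carefully turn all the traces into block sums and then verify that, in the two groups where negative terms appear (the $(u,u)$ block and the pairs $\{i,u\}$ with $i<u$), the cross terms cancel exactly — which is precisely where skew-Hermiticity of $C$, in the form $C_{iu} = -C_{ui}^*$ and $C_{uu}^* = -C_{uu}$, is used — while no block contribution is double counted or lost. Once the correct grouping is found each surviving piece is a manifestly nonnegative quantity, the only inequality used being $\sigma_i > \sigma_u$ for $i < u$.
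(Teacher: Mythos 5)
Your proof is correct but follows a somewhat different route from the paper's. The paper works with the coarse split $n = (n-k_u) + k_u$ and $m = (n-k_u) + k_u + (m-n)$, so that $B$ is written in $2\times 2$ blocks and $C$ in $3\times 3$ blocks; the part of $J$ carried by the top-left block collapses at once to the explicit square $\|B_1 L - L C_1\|^2$, and the remaining part is shown nonnegative by inspecting the real part of each of the $n-k_u$ diagonal entries of a certain matrix. You instead refine all the way to the multiplicity decomposition $n = k_1 + \cdots + k_u$ (and $m = k_1 + \cdots + k_u + (m-n)$), group the resulting contributions to $J$ by pairs of block indices, and verify nonnegativity group by group: the diagonal group $(u,u)$ vanishes identically, the mixed groups $\{i,u\}$ with $i<u$ rearrange into $\|\sigma_u B_{iu} - \sigma_i C_{iu}\|^2 + (\sigma_i^2 - \sigma_u^2)\|B_{ui}\|^2 \ge 0$, and the remaining groups are sums of squares outright. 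This stays at the block level throughout, avoiding the paper's entry-by-entry step, and arguably makes the role of the multiplicities $k_i$ more visible; I checked that your displayed expansion of $J$ and each regrouping is algebraically correct. Both proofs hinge on the same two facts — the skew-Hermitian block relations and $\sigma_i > \sigma_u$ for $i < u$. One small inefficiency on your side: the preliminary rewriting $\Sigma' = \Sigma^* - \sigma_u E^*$ followed by the identification of the ``$\Sigma^*$-part'' of $J$ with $\|B\Sigma - \Sigma C\|^2$ is correct but logically superfluous, since the lemma is stated directly in terms of $\Sigma'$ and the paper's proof of Lemma~\ref{lem-5} never introduces $E$; you are simply reversing the reduction that produced Lemma~\ref{lem-5} from Proposition~\ref{prop-2}, which serves as a consistency check but costs extra steps.
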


\begin{proof}
We denote 
$$J=\Re \left( \trace(BB^*\Sigma\Sigma')+\trace(\Sigma'\Sigma CC^*)-2\ \trace(B\Sigma C^*\Sigma')\right).$$ 
Write
\[
\Sigma=\begin{pmatrix}L&0&0\\0&\sigma_uI_{k_u}&0\end{pmatrix},\;\;\;\Sigma'=\begin{pmatrix}L&0\\0&0\\0&0\end{pmatrix},
\]
and let us write $B,C$ by blocks,
\[
B=\begin{pmatrix}B_1&B_2\\-B_2^*&B_4\end{pmatrix}
,\;\;\;C=\begin{pmatrix}C_1&C_2& C_3\\-C_2^*&C_4&C_5\\-C_3^*&-C_5^*&C_6\end{pmatrix}
\]
where $B_1,C_1$ are of the size of $L$ and $B_4,C_4$ are of the size of $I_{k_u}$. Then,
\[
\begin{array}{lll}
\trace(BB^*\Sigma\Sigma')&=&\trace((B_1B_1^*+B_2B_2^*)L^2),\\
\trace(\Sigma'\Sigma CC^*)&=&\trace(L^2(C_1C_1^*+C_2C_2^*+C_3C_3^*)),\\
\trace(B\Sigma C^*\Sigma')&=&\trace(B_1L C_1^*L+\sigma_uB_2 C_2^*L).
\end{array}
\]
Thus,
\[
J\geq \Re \left( \trace((B_1B_1^*+C_1C_1^*)L^2-2B_1L C_1^*L)\right)+
\]
\[
\Re \left( \trace((B_2B_2^*+C_2C_2^*+C_3C_3^*)L^2-2\sigma_uB_2 C_2^*L)\right).
\]
We will prove that these two terms are non-negative. For the first one, note that
\[
\Re \left( \trace((B_1B_1^*+C_1C_1^*)L^2-2B_1L C_1^*L)\right)=\|B_1L-L C_1\|^2\geq0.
\]
For the second one,
we check that for every $l$, $1\leq l\leq n-k_u$ the $l$-th diagonal entry 
of the matrix $(B_2B_2^*+C_2C_2^*+C_3C_3^*)L^2-2\sigma_uB_2 C_2^*L$ 
has a positive real part. Indeed, if we denote by $v\in\K^{k_u}$ the $l$-th row of $B_2$ 
by $w\in\K^{k_u}$ the $l$-th row of $C_2$ and by $x$ the $l$-th
row of $C_3$, we have
\[
\Re\left((B_2B_2^*+C_2C_2^*+C_3C_3^*)L^2-2\sigma_uB_2 C_2^*L\right)_{l,l}=
\]
\[
\sigma_l^2\left((\|v\|^2+\|w\|^2+\|x\|^2)-2\frac{\sigma_u}{\sigma_l}\Re\langle v,w\rangle\right)\geq
\sigma_l^2 \|v-w\|^2\geq0
\]
as $\sigma_u<\sigma_l$. This finishes the proof of Lemma \ref{lem-5} and hence of Proposition \ref{prop-2}.
\end{proof}

\section{Puting pieces together}\label{sec-5}

Before stating the main result of this section we have to introduce the following machinery:

\subsection{Second symmetric derivatives}\label{sec-5.1} 

In the case of Lipschitz-Riemann structures, the mappings we want to consider are not necessarily $\mathcal C^2$ and, to study
their convexity properties, 
an approach based on \rev{the usual covariant second derivative}{ma10} is insufficient. We will use instead the second symmetric upper derivative. 

Let $U \subseteq \R^k$ be an open set and $\phi: U \rightarrow \R$ be any function. The {\bf second symmetric upper
derivative} of $\phi$ at $x \in U$ in the direction $v \in \R^k$ is
\[
\SD\phi(x;v)=\limsup_{h\mapsto0}\frac{\phi(x+hv)+\phi(x-hv)-2\phi(x)}{h^2}
\]
which is allowed to be $\pm\infty$. If $U \subseteq \R$ is an interval, we simply write $\SD\phi(x)$ for $\SD\phi(x;1)$.

It is well-known that a continuous function $\phi$ on an interval is convex if and only if $\SD \phi(x)\geq 0$ for all
$x$ (see for example \cite{tho} Theorem 5.29). There is a stronger result due to Burkill \cite{bur} Theorem 1.1 (see
also \cite{tho} Corollary 5.31) which uses a weaker hypothesis:

\begin{theorem}[Burkill]\label{the-5.1}
Let $\phi : ]a,b[ \rightarrow \R$ be a continuous function such that $\SD \phi(x) \geq 0$ for almost all $x \in ]a,b[$,
and assume that  $\SD\phi(x)>-\infty$ for $x\in]a,b[$. Then, $\phi$ is a convex function.
\end{theorem}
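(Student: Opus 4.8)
The plan is to prove convexity by a barrier (maximum‑principle) argument: one upgrades the classical fact that $\SD\phi\ge 0$ \emph{everywhere} forces convexity to the present a.e.\ hypothesis by adding a carefully chosen convex penalization that is ``singular'' exactly on the null bad set. Since convexity is characterized by chords, it suffices to fix $c<d$ in $]a,b[$, let $\ell$ be the affine function with $\ell(c)=\phi(c)$, $\ell(d)=\phi(d)$, and show $\phi\le\ell$ on $[c,d]$. Suppose not; then $g:=\phi-\ell$ is continuous, vanishes at $c$ and $d$, and attains a value $M:=\max_{[c,d]}g>0$ at some interior point $x_0$. Let $E:=\{x\in\,]c,d[\;:\;\SD\phi(x)<0\}$, which has Lebesgue measure zero by hypothesis, while $\SD\phi>-\infty$ on $E$ by the second hypothesis.

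The key ingredient is an auxiliary function $p$ that is convex, nonnegative, bounded on $[c,d]$, and whose second symmetric difference quotient blows up at every point of $E$. I would take $p(x)=\int |x-t|\,d\mu(t)$, where $\mu=\sum_{n\ge 1}2^{n}\,(\lambda|_{U_n})$, $\lambda$ is Lebesgue measure, and $U_n\supseteq E$ is open with $U_n\subseteq(c-1,d+1)$ and $\lambda(U_n)<4^{-n}$. Then $\mu$ is a finite measure, so $p$ is convex, nonnegative, and bounded on $[c,d]$. For $x\in E$ and small $h$ one computes $p(x+h)+p(x-h)-2p(x)=2\int_{|x-t|<h}(h-|x-t|)\,d\mu(t)\ge h\,\mu\bigl((x-\tfrac{h}{2},x+\tfrac{h}{2})\bigr)\ge 2^{n}h^{2}$ once $h$ is small enough that $(x-\tfrac{h}{2},x+\tfrac{h}{2})\subseteq U_n$ — which holds for all small $h$ since $x\in U_n$ and $U_n$ is open. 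Letting $n\to\infty$ gives $\liminf_{h\to 0}h^{-2}\bigl(p(x+h)+p(x-h)-2p(x)\bigr)=+\infty$ at every $x\in E$. (If $E$ were merely countable one could just take $p(x)=\sum c_n|x-x_n|$ with $\sum c_n<\infty$; the measure‑zero case is where this construction is the real work, and the delicate point is obtaining the $\liminf$, not just the $\limsup$, equal to $+\infty$. I expect this step to be the main obstacle.)

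With $p$ fixed, choose parameters $\epsilon,\delta>0$ and set $q_0(x)=\epsilon(x-c)(x-d)+\delta\,p(x)$, a convex function, and $G:=\phi-\ell+q_0$. The claim is that $\SD G(x)>0$ for every $x\in\,]c,d[$. Indeed $(x-c)(x-d)$ is $C^2$ with second derivative $2$, so $\SD G(x)=2\epsilon+\limsup_{h\to 0}\bigl[h^{-2}(\phi(x+h)+\phi(x-h)-2\phi(x))+\delta h^{-2}(p(x+h)+p(x-h)-2p(x))\bigr]\ge 2\epsilon+\SD\phi(x)+\delta\,\liminf_{h\to 0}h^{-2}(p(x+h)+p(x-h)-2p(x))$, using $\limsup(a_h+b_h)\ge\limsup a_h+\liminf b_h$ (valid as the right side is never $\infty-\infty$ here, since the $p$‑term is $\ge 0$). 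If $x\notin E$ then $\SD\phi(x)\ge 0$ and the $p$‑term is $\ge 0$, so $\SD G(x)\ge 2\epsilon>0$; if $x\in E$ then $\SD\phi(x)$ is finite — this is exactly where the hypothesis $\SD\phi>-\infty$ is used — while the $p$‑term contributes $+\infty$, so $\SD G(x)=+\infty$. Consequently $G$ cannot attain its maximum over $[c,d]$ at an interior point, because at an interior maximum $\xi$ one has $G(\xi+h)+G(\xi-h)-2G(\xi)\le 0$ for small $h$, hence $\SD G(\xi)\le 0$. Since $G$ is continuous on $[c,d]$ its maximum is attained, necessarily at $c$ or $d$: as $g(c)=g(d)=0$ and $q_0(c)=\delta p(c)$, $q_0(d)=\delta p(d)$, we get $\max_{[c,d]}G\le\delta\|p\|_{L^\infty[c,d]}$.

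Finally I would compare with the value at $x_0$: since $p\ge 0$ and $(x_0-c)(x_0-d)\ge-\tfrac14(d-c)^2$, we have $G(x_0)\ge M-\tfrac14\epsilon(d-c)^2$. Now fix $\epsilon$ small enough that $M-\tfrac14\epsilon(d-c)^2>\tfrac12 M$, and then $\delta$ small enough that $\delta\|p\|_{L^\infty[c,d]}<\tfrac12 M$; this yields $\tfrac12 M<\max_{[c,d]}G\le\tfrac12 M$, a contradiction. Hence $\phi\le\ell$ on $[c,d]$, and as $c,d$ were arbitrary, $\phi$ is convex. The barrier/maximum‑principle skeleton is routine; all the weight of the theorem sits in the construction and pointwise analysis of the singular convex function $p$.
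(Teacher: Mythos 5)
The paper does not actually prove this statement: it is cited as Burkill's theorem with pointers to Burkill's 1951 paper and to Thomson's monograph, so there is no ``paper's proof'' to compare against. Your barrier argument is correct and self-contained. The maximum-principle skeleton (show $\phi\le\ell$ on each chord $[c,d]$; otherwise penalize by $\epsilon(x-c)(x-d)+\delta p$ to make $\SD G>0$ on the whole open interval, which forbids an interior maximum; then send $\epsilon,\delta\to 0$) is the standard way to upgrade an a.e.\ sign condition on $\SD$ to convexity, and every step checks out. The genuinely delicate part — a bounded convex $p$ with $\liminf_{h\to 0}h^{-2}\bigl(p(x+h)+p(x-h)-2p(x)\bigr)=+\infty$ at \emph{every} point of the null set $E$, not merely a $\limsup$ — is handled correctly: with $p(x)=\int|x-t|\,d\mu(t)$ and $\mu=\sum_n 2^n\lambda|_{U_n}$, $\sum 2^n\lambda(U_n)<1$ so $\mu$ is finite and $p$ is bounded on $[c,d]$; the identity $p(x+h)+p(x-h)-2p(x)=2\int_{|x-t|<h}(h-|x-t|)\,d\mu(t)$ and the restriction to $|x-t|<h/2$ give $\ge h\,\mu\bigl((x-\tfrac h2,x+\tfrac h2)\bigr)\ge 2^nh^2$ once $(x-\tfrac h2,x+\tfrac h2)\subseteq U_n$, which holds for all sufficiently small $h$ because $U_n$ is open and $x\in U_n$; letting $n\to\infty$ gives the $\liminf$. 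You also correctly isolate where the second hypothesis $\SD\phi>-\infty$ is used: in the superadditivity $\limsup(a_h+b_h)\ge\limsup a_h+\liminf b_h$, one needs $\SD\phi(x)$ finite at $x\in E$ to avoid an $\infty-\infty$ indeterminacy when adding the $+\infty$ contribution from $p$. This is a clean, correct proof of the cited theorem.
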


Theorem \ref{the-5.1}  will allow us to assemble the pieces where convexity is proven in Proposition \ref{prop-2} to
prove our main results (\rev{Theorems \ref{th-1} and \ref{the-7.8}}{mi31}). We proceed a little more generally as the result may be
of interest in other circumstances. Let $\CM$ be a $k$-dimensional $\mathcal C^2$ manifold (not necessarily having a Riemannian
structure).

\begin{definition}\label{def-5.1}
Let $\alpha:\CM\rightarrow\R$. We say that {\bf $\SD \al$ is bounded from $-\infty$} (denoted $\SD \al > -\infty$) if
for every $x \in \CM$ there is an open neighborhood $U_x\subseteq\CM$ and a coordinate chart
$\varphi_x:U_x\rightarrow\R^k$, $\varphi_x(x)=0$ such that
\[
\SD(\alpha\circ \varphi_x^{-1})(0;v)>-\infty
\]
for every $v \in \R^k.$
\end{definition}


\rev{}{mi32}
%
%

The following lemma is a consequence of Definition \ref{def-5.1}.

\begin{lemma}\label{lem-5.2}
Let $\CM$ be a \rev{$\mathcal C^2$  manifold}{mi33}, let $\alpha : \CM \rightarrow ]0, \infty[$ be a locally Lipschitz
mapping. 
\rev{Then, $\SD \al > -\infty$ if and only if,
for any function $\phi : ]0, \infty[ \rightarrow \R$ of class $\mathcal C^2$,
$\SD (\phi \circ \alpha) >
-\infty$.}{mi34} In particular,  $\SD \al > -\infty$ if and only if  $\SD (\log \circ \al) > -\infty$.
\end{lemma}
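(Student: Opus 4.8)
The plan is to reduce everything to a one‑point, one‑dimensional computation and then linearize the outer function $\phi$ by Taylor's formula. Both sides of the claimed equivalence are, by Definition~\ref{def-5.1}, statements of the form ``at every $x\in\CM$, in some chart centred at $x$, the second symmetric upper derivative at the centre is $>-\infty$ in every direction'', and for the $\phi\circ\al$ side we are free to use \emph{the same} chart that witnesses the $\al$ side. So I would fix $x\in\CM$, take such a chart $\varphi_x$ with $\varphi_x(x)=0$, set $g=\al\circ\varphi_x^{-1}$ (locally Lipschitz near $0\in\R^k$, positive) and $a=g(0)=\al(x)>0$, and reduce the whole lemma to the local assertion that, for each direction $v\in\R^k$, one has $\SD g(0;v)>-\infty$ iff $\SD(\phi\circ g)(0;v)>-\infty$.

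The heart of the matter is a second‑order Taylor expansion of $\phi$ at $a$. Since $\phi\in C^2$, choose a compact interval $I\subset\,]0,\infty[$ with $a$ in its interior and put $C=\tfrac12\sup_I|\phi''|<\infty$, so that $|\phi(t)-\phi(a)-\phi'(a)(t-a)|\le C(t-a)^2$ for $t\in I$. Let $L$ bound the local Lipschitz constant of $g$ near $0$. For $|h|$ small, $g(hv)$ and $g(-hv)$ lie in $I$ and satisfy $|g(\pm hv)-a|\le L|v|\,|h|$; adding the Taylor estimate at $t=g(hv)$ to the one at $t=g(-hv)$ and dividing by $h^2$ gives
\[
\frac{(\phi\circ g)(hv)+(\phi\circ g)(-hv)-2(\phi\circ g)(0)}{h^2}
=\phi'(a)\,\frac{g(hv)+g(-hv)-2g(0)}{h^2}+R(h),
\qquad |R(h)|\le 2CL^2|v|^2 .
\]
This is precisely where local Lipschitz‑ness of $\al$ is used: it forces the $\phi''$‑remainder to be $O(h^2)$, so the additive correction $R(h)$ stays bounded as $h\to0$.

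Writing $\Delta_g(h)$ for the $g$‑difference quotient above, the identity reads $\Delta_{\phi\circ g}(h)=\phi'(a)\,\Delta_g(h)+R(h)$ with $R$ bounded near $0$. If $\phi'(a)>0$ — which is the case that matters, e.g.\ $\phi=\log$ or $\phi=\exp$ — then taking $\limsup_{h\to0}$ and using boundedness of $R$ yields $\phi'(a)\,\SD g(0;v)-M\le\SD(\phi\circ g)(0;v)\le\phi'(a)\,\SD g(0;v)+M$ with $M=2CL^2|v|^2$, so one side is $>-\infty$ exactly when the other is; if $\phi'(a)=0$ then $\Delta_{\phi\circ g}$ itself is bounded, so $\SD(\phi\circ g)(0;v)$ is finite. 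Doing this at every $x$ proves the lemma. The elementary converse direction is of course just the case $\phi=\mathrm{id}$, and the final ``in particular'' is $\phi=\log$, whose reverse implication one obtains by running the forward argument on the locally Lipschitz map $\log\circ\al$ with the $C^2$ outer function $\exp$ (the argument above applies verbatim to real‑valued inner maps). The delicate point — and the place I would expect to have to be careful — is that $\SD$ is a $\limsup$, so ``transferring $>-\infty$'' through $\phi$ genuinely relies on controlling the sign and size of $\phi'$ near $a$; the bounded‑remainder linearization is exactly what legitimizes this transfer, and without it the naive manipulation of $\limsup$'s would break down.
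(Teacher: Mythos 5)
Your argument follows the same strategy as the paper's own proof: expand $\phi$ to second order around $a=\alpha(x)$ and use the local Lipschitz bound on $\alpha$ to show that the quadratic remainder contributes only an $O(1)$ correction to the symmetric second difference quotient. The difference is cosmetic — the paper expands along a particular subsequence $h_p$ realizing $\SD(\alpha\circ\varphi_x^{-1})(0;v)$ as a limit $C$, whereas you write a uniform remainder bound $|R(h)|\le 2CL^2|v|^2$ and manipulate the $\limsup$ directly — and yours is arguably cleaner, since it avoids the paper's misprinted final inequality ``$\ge C+D$'', which drops the factors $\phi'(\alpha(x))$ and $\phi''(\alpha(x))$ entirely. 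Your treatment of the ``in particular'' for $\log$, running the forward argument once with $\phi=\log$ and once with the locally Lipschitz inner map $\log\circ\alpha$ and outer map $\exp$, is also the right move and is more scrupulous than the paper, whose bare ``if'' direction (take $\phi=\mathrm{id}$) does not by itself deliver the reverse implication for $\log$.

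Your one hedge — restricting to $\phi'(a)\ge 0$, ``which is the case that matters'' — is not a defect in your argument so much as a defect in the Lemma's statement, and it is shared by the paper's own proof. The universal ``for any $C^2$ $\phi$'' claim is in fact false when $\phi'<0$: take $\CM=\R$, $\alpha(x)=|x|+1$, and $\phi(t)=-t$. Then $\alpha$ is locally Lipschitz with $\SD\alpha(0;1)=\limsup_h 2|h|/h^2=+\infty$ and $\SD\alpha\equiv 0$ elsewhere, so $\SD\alpha>-\infty$ holds, yet $\SD(\phi\circ\alpha)(0;1)=\limsup_h(-2|h|/h^2)=-\infty$, and this persists in every chart around $0$ since a $C^2$ chart cannot remove the $|y|$ singularity. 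Your identity $\Delta_{\phi\circ g}(h)=\phi'(a)\Delta_g(h)+R(h)$ with $R$ bounded shows exactly why: a negative $\phi'(a)$ flips the sign when $\limsup\Delta_g=+\infty$, and local Lipschitz-ness of $\alpha$ in no way forbids $\SD\alpha=+\infty$. Since only $\phi=\log$ and $\phi=\exp$ (both strictly increasing) are ever invoked, the paper is unharmed, but the Lemma should properly carry the hypothesis that $\phi$ is non-decreasing.
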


\begin{proof}
\rev{The {\em if} part is trivial (just make $\phi(t)=t$). In order to prove
the {\em only if} part, we
assume that $\SD \al > -\infty$.}{mi34} Let $x\in\CM$ and let $\varphi_x : U_x \rightarrow \R^k$ be a coordinate chart such
that $\varphi_x(x)=0$ and 
$\SD (\alpha \circ \varphi_x^{-1})(0;v) > -\infty$ for each $v \in \R^k$.
There is a sequence $h_p \rightarrow 0$ such that
$$
\lim_{p \rightarrow \infty} \frac{\alpha(\varphi_x^{-1}(h_pv)) + \alpha(\varphi_x^{-1}(-h_pv)) - 2\alpha(x)}{h_p^2} = C
> -\infty .
$$
Let us define $H_p = \alpha(\varphi_x^{-1}(h_pv))-\alpha(x)$, and similarly  $K_p =
\alpha(\varphi_x^{-1}(-h_pv))-\alpha(x)$.  By Taylor's formula we get
\[
\phi(\alpha(\varphi_x^{-1}(h_pv))) = \phi(\al(x)) + \phi'(\al(x))H_p + \phi''(\al(x))\frac{H_p^2}{2} + o(H_p^2),
\]
and similarly
\[
\phi(\alpha(\varphi_x^{-1}(-h_pv))) = \phi(\al(x)) + \phi'(\al(x))K_p + \phi''(\al(x))\frac{K_p^2}{2} + o(K_p^2),
\]
so that
$$\frac{\phi(\alpha(\varphi_x^{-1}(h_pv)))+\phi(\alpha(\varphi_x^{-1}(-h_pv)))-2\phi(\alpha(x)}{h_p^2} = $$
$$\phi'(\alpha(x))\frac{H_p + K_p}{h_p^2} + \phi''(\alpha(x))\frac{H_p^2 + K_p^2}{2h_p^2} + \frac{o(H_p^2) +
o(K_p^2)}{h_p^2}.$$
Notice that $\lim_{p \rightarrow \infty} \frac{H_p + K_p}{h_p^2} = C.$
Since $h \ra \alpha(\varphi_x^{-1}(hv))$ is Lipschitz in a neighborhood of $0$ we have, for a suitable constant $D > 0$,
$H_p^2 \le Dh_p^2$ and 
$K_p^2 \le Dh_p^2$. Thus, taking the $\limsup$ as $p \ra \infty$ gives $\SD \phi(\alpha \circ \varphi_x^{-1})(0,v) \ge C
+ D > - \infty$ and we are done. 
\end{proof}

\subsection{Projecting geodesics on submanifolds : the Euclidean case}\label{sec-5.2} 

The following technical lemma, interesting by itself, is a consequence of Lebesgue's Density Theorem.

\begin{lemma}\label{lem-5.3}
For any locally integrable function $f$ defined in $\mathbb R$ with values
in $\mathbb R^n$, \secrev{}{M1}\rev{let $x\in \mathbb R$ be a point where
$f$ is locally integrable.
This means that $F'(x) = f(x)$ where $F$ denotes an antiderivative of $f$.}{mi35}
Then
$$\lim_{\ep \ra 0} \frac{2}{\ep^2} \int_{x}^{x+\ep}(y-x)f(y)dy = f(x).$$
\end{lemma}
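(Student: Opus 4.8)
The plan is to reduce the statement to the one-dimensional Lebesgue Density (or Lebesgue Differentiation) Theorem applied coordinatewise, after rewriting the integral so that the weight $(y-x)$ is absorbed into a nested averaging. First I would normalise by translating so that $x=0$ (replacing $f$ by $y\mapsto f(x+y)$), so it suffices to show
\[
\lim_{\ep\to 0}\frac{2}{\ep^2}\int_0^\ep y\,f(y)\,dy = f(0)
\]
at a Lebesgue point $0$ of $f$, i.e. a point where $F'(0)=f(0)$ for $F$ an antiderivative of $f$. Since the statement is about a vector-valued function and all operations are linear, I would treat each scalar component separately and thus assume $f:\R\to\R$.

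Next I would integrate by parts, writing $G(\ep)=\int_0^\ep f(y)\,dy = F(\ep)-F(0)$, so that
\[
\int_0^\ep y\,f(y)\,dy = \ep\,G(\ep) - \int_0^\ep G(y)\,dy .
\]
Dividing by $\ep^2/2$ gives
\[
\frac{2}{\ep^2}\int_0^\ep y\,f(y)\,dy
= \frac{2 G(\ep)}{\ep} - \frac{2}{\ep^2}\int_0^\ep G(y)\,dy .
\]
Now the hypothesis that $0$ is a point where $F'=f$ means precisely $G(\ep)/\ep \to f(0)$ as $\ep\to 0$. Hence the first term tends to $2f(0)$. For the second term, set $g(y)=G(y)/y$, which is continuous on $(0,\ep_0]$ and extends continuously to $0$ with value $f(0)$; then $\frac{2}{\ep^2}\int_0^\ep G(y)\,dy = \frac{2}{\ep^2}\int_0^\ep y\, g(y)\,dy$, and since $g(y)\to f(0)$ as $y\to 0$, a routine $\ep$-$\delta$ estimate shows $\frac{2}{\ep^2}\int_0^\ep y\,g(y)\,dy \to f(0)\cdot \frac{2}{\ep^2}\cdot\frac{\ep^2}{2} = f(0)$. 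Subtracting, the limit is $2f(0) - f(0) = f(0)$, as claimed. Finally I would handle the two-sided limit $\ep\to 0$ (positive and negative $\ep$) by noting the expression is even under $\ep\mapsto -\ep$ combined with $y\mapsto -y$, or simply repeating the argument on $[\ep,0]$ for $\ep<0$.

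The only genuinely delicate point is making sure the hypothesis is used correctly: "$f$ locally integrable at $x$" should be read as "$x$ is a point of differentiability of the antiderivative", equivalently a Lebesgue point in the weak sense $\frac1h\int_0^h f \to f(x)$; this is exactly the one-sided statement $G(\ep)/\ep\to f(0)$ that both terms above rely on, and Lebesgue's Density/Differentiation Theorem guarantees almost every $x$ is such a point. An alternative, perhaps cleaner, route avoiding integration by parts is to write $\frac{2}{\ep^2}\int_0^\ep y f(y)\,dy = \int_0^1 2t\,\big(f(t\ep)\big)\,dt$ after the substitution $y=t\ep$, and then argue that $f(t\ep)\to f(0)$ in $L^1([0,1],\,2t\,dt)$ as $\ep\to 0$ using the Lebesgue point property (dominated/uniform integrability of the rescalings); I expect the integration-by-parts version to be the least technical to write out rigorously, so I would present that one.
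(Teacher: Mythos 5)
Your proof is correct and follows essentially the same route as the paper's: integrate by parts, identify the first term as $2f(x)$ via the differentiability of the antiderivative, and control the remaining term by rewriting $F(y)=(y-x)h(y)$ (your $g$) with $h$ continuous at $x$ and applying a mean-value/$\ep$-$\delta$ argument. The only cosmetic differences are the translation to $x=0$ and the explicit componentwise reduction, neither of which changes the substance.
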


\begin{proof}
Notice that, by Lebesgue's differentiation theorem, an antiderivative $F$ of $f$ exists a. e. and it is absolutly
continuous. Suppose that
$F(x)=0$. Let us define 
$$h(y) = \left\lbrace  
\begin{array}{ll}
 {F(y)}/(y-x)& \mbox{ if } y \ne x,\\
  f(x) & \mbox{ if } y = x,\\
\end{array}
\right. $$
so that $h$ is a continuous function and $F(y) = (y-x)h(y)$ for any $y$. 
Integrating by parts gives 
$$\int_x^{x+\ep} (y-x)f(y) dy = \ep F(x+\ep) - \int_x^{x+\ep} F(y) dy$$
so that
$$ \frac{2}{\ep^2} \int_x^{x+\ep} (y-x)f(y) dy =  2 \frac{F(x+\ep) - F(0)}{\ep} - \frac{2}{\ep^2} \int_x^{x+\ep}
(y-x)h(y) dy.$$
Since $h$ is continuous, by the Mean Value Theorem, there exists $ \ze \in [x, x+\ep]$ such that 
$$\frac{2}{\ep^2} \int_x^{x+\ep} (y-x)h(y) dy = \frac{2h(\ze)}{\ep^2} \int_x^{x+\ep} (y-x) dy = h(\ze) \ra h(x) = f(x)$$
as $\ep \ra 0$. On the other hand 
$$\lim_{\ep \ra 0} 2 \frac{F(x+\ep) - F(x)}{\ep} = 2f(x).$$
Thus 
$$ \lim_{\ep \ra 0} \frac{2}{\ep^2} \int_{x}^{x+\ep}(y-x)f(y)dy = 2f(x) - f(x) = f(x)$$
and we are done. 
\end{proof}

Our aim is now to see how close are a geodesic in a Lipschitz-Riemannian manifold and a geodesic in a submanifold when
they have the same tangent at a given point. Let us start to study a simple case. 

Let us consider the Lipschitz-Riemann structure defined on an 
\rev{open, $k$-dimensional}{mi36} set $\Om \subset \R^k$ containing $0$ by the scalar
product $\left\langle u,v \right\rangle_x = v^T H(x) u$ (see section \ref{sec-2.3}). 
\begin{enumerate}
 \item[1.] The matrix  $H(0)$ is supposed to have the following block structure
$$H(0) = \left( 
\begin{array}{cc}
H_p(0) & 0 \\ 
0 & H_{k-p}(0)
\end{array}
\right) .$$
\end{enumerate}
\rev{}{mi38}
We also suppose that (see section \ref{sec-2.4})
\begin{enumerate}
 \item[2.] The entries $h_{ij}(x)$ of $H(x)$ are regular at $x=0$,
\end{enumerate}
The set $\Om_p = \Om \cap (\R^p \times \left\lbrace 0 \right\rbrace)$ is a submanifold in $\Om$. We suppose that  
\begin{enumerate}
 \item[3.] $H_p$ is $\mathcal C^2$ in $\Om_p$,
\end{enumerate}
so that $\Om_p$ is in fact a smooth $\mathcal C^2$ Riemannian manifold for the induced $H-$structure. 
Let us now \rev{consider}{mi39} a vector $a \in \R^p \times \left\lbrace 0 \right\rbrace$ and three parametrized curves denoted by
$x$, $x_p$, and $y$ defined in a neighborhood of $0$ in $\R$, and such that:
\begin{enumerate}
 \item[4.] $x(0) = x_p(0) = y(0) = 0,$
 \item[5.] $\dot x (0) = \dot x_p(0) = \dot y(0) = a,$
 \item[6.] $x$ is a geodesic in $\R^k$ \rev{for the $H$-structure,}{mi37}
 \item[7.] $x_p$ is its orthogonal projection onto $\R^p \times \left\lbrace 0 \right\rbrace$,
 \item[8.] $y$ is a geodesic in $\R^p \times \left\lbrace 0 \right\rbrace$ for the induced structure.
\end{enumerate}
According to Theorem \ref{the-2.1}, $x$ has regularity $\mathcal C^{1+Lip}$ so that its second derivative exists a.e. We suppose
here that 
\begin{enumerate}
 \item[9.] The second derivative $\ddot x(t)$ is defined at $t=0$, and 
$$
\frac{d}{dt}\mid_{t=0}(H(x(t)) \dot x(t)) \in  
\frac{1}{2}\sum_{i,j}\dot x_i(0)\dot x_j(0) \partial h_{ij}(x(0))
.
$$
\end{enumerate}
In this context we have:

\begin{lemma}\label{lem-5.4} Under the hypotheses $1$ to $9$ above, the curves $x_p$ and $y$  have a contact of order
$2$ at $0$: $x_p(s) = y(s) + o(s^2).$
\end{lemma}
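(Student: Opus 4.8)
The strategy is to compare the second-order Taylor expansions at $0$ of both curves $x_p$ and $y$ and show they agree up to $o(s^2)$. Since by hypotheses 4 and 5 both curves satisfy $x_p(0)=y(0)=0$ and $\dot x_p(0)=\dot y(0)=a$, it suffices to show that $\ddot x_p(0) = \ddot y(0)$ (where these second derivatives are understood in the a.e./Lebesgue-density sense provided by Theorem~\ref{the-2.1} and hypothesis 9). First I would write down the geodesic equation for $y$ in $\Om_p$: since $H_p$ is $C^2$ (hypothesis 3), $y$ is a genuine $C^2$ geodesic and
\[
\ddot y(0) = -H_p(0)^{-1}\left(\frac{1}{2}\sum_{1\le i,j\le p} a_i a_j\, \partial_{y} h^{(p)}_{ij}(0) - \text{(first-order term from }\tfrac{d}{dt}H_p(x_p)\dot x_p)\right),
\]
or more cleanly: from Remark~\ref{rem-2-1} applied to the smooth structure $H_p$ on $\Om_p$, $\frac{d}{dt}(H_p(y)\dot y) = \frac12 \dot y^T \partial H_p(y)\dot y$ at $t=0$, which since everything is $C^2$ determines $\ddot y(0)$ uniquely in terms of $a$, $H_p(0)$, and the first partials of $h_{ij}$ at $0$ (with $i,j\le p$).

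\textbf{Expanding $x$ and projecting.} Next I would use hypothesis 9: the curve $x$ satisfies $\frac{d}{dt}\big|_{t=0}(H(x(t))\dot x(t)) \in \frac12\sum_{i,j}\dot x_i(0)\dot x_j(0)\,\partial h_{ij}(x(0))$, and by hypothesis 2 the $h_{ij}$ are regular at $0$, so the generalized gradient $\partial h_{ij}(0)$ reduces to $\{\nabla h_{ij}(0)\}$ — this is the crucial point that makes the inclusion an honest equation at $t=0$. Combined with the block structure of $H(0)$ (hypothesis 1) and $a\in\R^p\times\{0\}$, expanding $\frac{d}{dt}(H(x)\dot x) = \dot H(x)\dot x + H(x)\ddot x$ at $t=0$ gives $H(0)\ddot x(0)$ plus a known first-order correction. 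Because $H(0)$ is block-diagonal and $a$ lies in the first block, the orthogonal projection $x_p$ onto $\R^p\times\{0\}$ has $\ddot x_p(0)$ equal to the first $p$ coordinates of $\ddot x(0)$; one then checks that the equations for these first $p$ coordinates of $\ddot x(0)$ coincide exactly with the geodesic equation for $y$ in the induced $H_p$-structure — the off-block entries of $\dot H(0)$ contracted against $a\otimes a$ only involve $\partial h_{ij}(0)$ with $i,j\le p$ by hypothesis 1 and the form of $a$, so no extra terms appear.

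\textbf{Main obstacle.} The delicate step is justifying that $\ddot x_p(0)$ genuinely exists and equals $\frac{d^2}{ds^2}\big|_{s=0}$ of the actual projected curve $x_p(s)$ (the orthogonal projection of $x(s)$ onto the linear subspace, which here is just truncation to the first $p$ coordinates since the subspace is $\R^p\times\{0\}$), and that the $o(s^2)$ remainder in the Taylor expansion of $x$ — which is only $C^{1+\mathrm{Lip}}$, not $C^2$ — is controlled. Here I would invoke Lemma~\ref{lem-5.3}: writing $\dot x(s) = a + \int_0^s \ddot x(\tau)\,d\tau$ with $\ddot x\in L^\infty$, the density-point hypothesis 9 gives $\frac{2}{\ep^2}\int_0^{\ep}(s)\ddot x(s)\,ds \to \ddot x(0)$, hence $x(\ep) = a\ep + \frac12\ddot x(0)\ep^2 + o(\ep^2)$, and the same holds for $x_p$ componentwise. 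Comparing with the $C^2$ expansion $y(\ep) = a\ep + \frac12\ddot y(0)\ep^2 + o(\ep^2)$ and using $\ddot x_p(0)=\ddot y(0)$ from the previous paragraph yields $x_p(s) - y(s) = o(s^2)$, which is the claim.
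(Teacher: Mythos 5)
Your overall plan — prove $\ddot x_p(0) = \ddot y(0)$ and then deduce the order-$2$ contact via Taylor's formula with integral remainder and Lemma~\ref{lem-5.3} — is the same as the paper's, and your third paragraph is essentially the paper's argument and is correct.

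The flaw is in the middle step. You assert that regularity of $h_{ij}$ at $0$ (hypothesis 2) forces the generalized gradient $\partial h_{ij}(0)$ to be the singleton $\{\nabla h_{ij}(0)\}$, thereby promoting the differential inclusion from hypothesis 9 to an honest equation in $\R^k$. This is false: Clarke regularity does not imply differentiability. For example, $f(x)=|x|$ is regular at $0$ (it is convex), yet $\partial f(0)=[-1,1]$. So the inclusion cannot in general be upgraded to an equation in the full space. The correct argument, which is the one the paper makes, is subtler: one first projects the inclusion onto $\R^p$, and only then invokes Clarke's Proposition~2.3.15 — this is where regularity is used — to conclude that
$\Pi_{\R^p}\partial h_{ij}(x(0)) = \partial h_{p,ij}(x_p(0))$,
the generalized gradient of the restriction of $h_{ij}$ to $\R^p\times\{0\}$. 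It is only this restricted generalized gradient that collapses to a singleton, and that is a consequence of $H_p$ being $C^2$ on $\Om_p$ (hypothesis 3), not of regularity. The block structure of $H(0)$ (hypothesis 1) together with $a\in\R^p\times\{0\}$ is what makes the projection of the left-hand side $H(0)\ddot x(0) + \tfrac{d}{dt}|_{t=0}(H(x(t)))\dot x(0)$ come out cleanly as $H_p(0)\ddot x_p(0) + \tfrac{d}{dt}|_{t=0}(H_p(x_p(t)))\dot x_p(0)$. As written, your argument rests on a singleton property that does not hold, and without the restriction theorem the key equality $\ddot x_p(0)=\ddot y(0)$ is not actually established.
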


\begin{proof} 
\rev{
By hypothesis 3,
\begin{equation}\label{difsys1}
y \in \mathcal C^{2} .
\end{equation}
Hypothesis 8 says that $y$ is a geodesic. Because geodesics are
parametrized by arc length,
\begin{equation}\label{difsys2}
\dot y^T(s) H_p(y(s)) \dot y(s) = 1.
\end{equation}
The Euler-Lagrange equation for geodesics in is now
\begin{equation}\label{difsys3}
\frac{d}{ds}(H_p(y(s)) \dot y(s)) =  \frac{1}{2}\sum_{i,j}\dot y_i(s)\dot y_j(s) \grad h_{p,ij}(y(s)),
\end{equation}
where $h_{p,ij}$ is $h_{ij}$, seen as a function of $x_1,\ldots, x_p$.
The differential system~(\ref{difsys1}-\ref{difsys3}) 
actually defines
$y(s)$ as a curve in $\Om_p$, in function of the initial condition $(y(0),\dot y(0))$.}{mi40}

Moreover, thanks to hypothesis $9$ we have:
$$H(x(0)) \ddot x(0) + \frac{d}{dt}_{|t=0}(H(x(t))) \dot x(0) \in  \frac{1}{2}\sum_{i,j=1}^k \dot x_i(0)\dot x_j(0)
\partial h_{ij}(x(0)) .$$
When we project it onto $\R^p$ we get, with $x(t) = 
\left( 
\begin{array}{l}
x_p(t) \\ 
x_{k-p}(t)
       \end{array}
\right) ,$
$$H_p(0)  \ddot x_p(0) + \frac{d}{dt}_{|t=0}(H_p(x_p(t))) \dot x_p(0) \in  \frac{1}{2}\sum_{i,j=1}^p \dot x_{p,i}(0)\dot
x_{p,j}(0) \Pi_{\R^p}\partial h_{ij}(x(0)).$$
Since the functions $h_{ij}(x)$ for $i,j=1 \ldots p$ are regular (hypothesis 2), from Clarke \cite{cla} Proposition
2.3.15, we obtain
$$ \Pi_{\R^p}\partial h_{ij}(x(0)) =  \partial h_{p,ij}(x_p(0)) = \grad h_{p,ij}(x_p(0))$$
so that 
$$H_p(0)  \ddot x_p(0) + \frac{d}{dt}\mid_{t=0}(H_p(x_p(t))) \dot x_p(0) =  \frac{1}{2}\sum_{i,j=1}^p \dot x_{p,i}(0)\dot
x_{p,j}(0) \grad h_{p,ij}(x_p(0)),$$
Taking at $t=0$ the differential equation giving $y$ and noting that $y(0)=x_p(0)$, $\dot y(0) = \dot x_p(0)$ gives
$$\ddot y(0) = \ddot x_p(0).$$

We want to prove that $x_p(s) = y(s) + o(s^2).$ According to Taylor's formula with integral remainder, we have
$$x_p(s) - y(s) = x_p(0) - y(0) + s(\dot x_p(0) - \dot y(0)) + \int_0^s (\ddot x_p(\si) - \ddot y(\si))\si d\si,$$
so that, 
$$2\frac{x_p(s) - y(s)}{s^2} =  \frac{2}{s^2}\int_0^s (\ddot x_p(\si) - \ddot y(\si))\si d\si.$$
From Lemma \ref{lem-5.3} and hypothesis $9$, the limit of this expression exists at $s = 0$, and it is equal to $\ddot
x_p(0) - \ddot y(0) = 0$. This achieves the proof.
\end{proof}
\begin{proposition}\label{prop-projectingeuclidean}
Let $\R^k$ be endowed with the \rev{Lipschitz-Riemann}{mi09} structure defined by $\langle u,v\rangle_x=v^TH(x)u$, where the entries $h_{ij}(x)$ of $H(x)$ are regular and $H(x)$ has the block structure
$$H(x) = \left( 
\begin{array}{cc}
H_p(x) & 0 \\ 
0 & H_{k-p}(x)
\end{array}
\right),$$
for $x\in\R^k$. 
\rev{Assume that $H_p$ is $\mathcal C^2$ for all $x \in \R^p \times \{0\} \subset
\R^k$.}{ma12} Let $x:[a,b]\ra\R^k$ be a geodesic in $\R^k$
\rev{with respect to the Lipschitz-Riemann structure}{ma11}. Then, there exists a zero-measure set $Z\subseteq[a,b]$ such that for $t_0\in[a,b]\setminus Z$ the following holds:

``\rev{If $x(t_0)\in \R^p\times\{0\} \subset \R^k$ and $\dot x(t_0)\in \R^p\times\{0\} \subset \R^k$,}{ma12} then the projection $x_p(t)=\pi_{\R^p}(x(t))$ has a contact of order $2$ with $y(t)$, the unique geodesic in $\R^p$ \rev{with 
respect to the Lipschitz-Riemann structure $H_p$ with initial conditions}{ma12}
\thirdrev{$y(t_0)=\pi_{\mathbb R^p} (x(t_0))$ and $\dot y(t_0)=\pi_{\mathbb R^p} (\dot x(t_0))$}{Remark 28}''.
\end{proposition}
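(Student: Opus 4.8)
The plan is to reduce the statement to Lemma~\ref{lem-5.4} by translating coordinates, once a suitable null set $Z$ has been isolated. The set $Z$ will come entirely from regularity that is already available: by Theorem~\ref{the-2.1}, $x \in C^{1+Lip}$, so $\ddot x(t)$ exists for almost every $t$; and, since $H$ is locally Lipschitz and $x$ is $C^1$, the map $t \mapsto H(x(t))\dot x(t)$ is locally Lipschitz, hence differentiable almost everywhere, and by Remark~\ref{rem-2-1} the differential inclusion
\[
\frac{d}{dt}\bigl(H(x(t))\dot x(t)\bigr)\ \in\ \frac{1}{2}\sum_{i,j}\dot x_i(t)\,\dot x_j(t)\,\partial h_{ij}(x(t))
\]
holds almost everywhere. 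I would take $Z$ to be the union of the null set where $\ddot x$ fails to exist, the null set where $\tfrac{d}{dt}(H(x)\dot x)$ fails to exist or the inclusion above fails, and the endpoints $\{a,b\}$; thus $Z$ is a null set depending only on $x$ and $H$, not on $p$.

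Fix now $t_0 \in [a,b]\setminus Z$ and assume $x(t_0)\in\R^p\times\{0\}$ and $\dot x(t_0)\in\R^p\times\{0\}$ (otherwise there is nothing to prove). Since $x$ is parametrized by arc length, $\dot x(t_0)^T H(x(t_0))\dot x(t_0)=1$, so $\dot x(t_0)\ne 0$ and, as $H_p$ is $C^2$ on $\R^p\times\{0\}$, the unit-speed geodesic $y$ of the $H_p$-structure with $y(t_0)=x(t_0)$, $\dot y(t_0)=\dot x(t_0)$ exists and is unique on a two-sided neighborhood of $t_0$. I would then translate so that $t_0$ and $x(t_0)$ become the origin: set $\tilde H(u)=H(u+x(t_0))$, $\tilde x(s)=x(t_0+s)-x(t_0)$, $\tilde x_p(s)=\pi_{\R^p}(\tilde x(s))$, $\tilde y(s)=y(t_0+s)-x(t_0)$, and $a=\dot x(t_0)\in\R^p\times\{0\}$. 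Because $x(t_0)\in\R^p\times\{0\}$, the translation $u\mapsto u+x(t_0)$ preserves $\R^p\times\{0\}$, so the block form of $\tilde H$, the regularity of its entries, and the $C^2$-smoothness of $\tilde H_p$ on $\R^p\times\{0\}$ are inherited from $H$; the geodesic property, governed by the length functional, is translation-invariant as well.

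The substance is then to verify hypotheses $1$--$9$ of Lemma~\ref{lem-5.4} for the data $\tilde H,\tilde x,\tilde x_p,\tilde y,a$. Hypotheses $1$--$3$ are the structural properties just noted; $4$, $6$, $7$, $8$ are immediate from the constructions; $5$ holds because $\dot{\tilde x}_p(0)=\pi_{\R^p}(a)=a$ (as $a\in\R^p\times\{0\}$) and $\dot{\tilde y}(0)=a$ by the choice of $y$. Hypothesis $9$ is exactly where $t_0\notin Z$ enters: $\ddot{\tilde x}(0)=\ddot x(t_0)$ exists, and since $\partial\tilde h_{ij}(0)=\partial h_{ij}(x(t_0))$ (translation-invariance of the generalized gradient, immediate from its definition),
\[
\frac{d}{ds}\Big|_{s=0}\bigl(\tilde H(\tilde x(s))\,\dot{\tilde x}(s)\bigr)
=\frac{d}{dt}\Big|_{t=t_0}\bigl(H(x(t))\,\dot x(t)\bigr)
\ \in\ \frac{1}{2}\sum_{i,j}\dot{\tilde x}_i(0)\,\dot{\tilde x}_j(0)\,\partial\tilde h_{ij}(0).
\]
Lemma~\ref{lem-5.4} then gives $\tilde x_p(s)=\tilde y(s)+o(s^2)$; translating back and using $x_p(t_0)=x(t_0)=y(t_0)$, this reads $x_p(t)=y(t)+o((t-t_0)^2)$, i.e.\ contact of order $2$ at $t_0$.

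I expect no genuinely hard step here: all the analysis is already packaged into Theorem~\ref{the-2.1}, Remark~\ref{rem-2-1} and Lemma~\ref{lem-5.4}. The one point needing care is the bookkeeping of the previous paragraph --- confirming that each of the nine hypotheses of Lemma~\ref{lem-5.4} is strictly translation-invariant, in particular that the generalized-gradient inclusion of hypothesis $9$ transports correctly, and that the various almost-everywhere conditions coming from $C^{1+Lip}$-regularity are gathered into a single null set $Z$.
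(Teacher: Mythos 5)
Your argument is correct and mirrors the paper's proof: both take $Z$ from Remark~\ref{rem-2-1} (the null set where $\ddot x$ fails to exist or the generalized Euler--Lagrange inclusion fails) and then invoke Lemma~\ref{lem-5.4}. You are more explicit than the paper about the translation of coordinates needed to match Lemma~\ref{lem-5.4}'s normalization $x(0)=0$, $t_0=0$, but the underlying reduction is the same.
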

\begin{proof}
From Remark \ref{rem-2-1}, there exists a zero measure $Z\subseteq[a,b]$ such that for $t_0\in[a,b]\setminus Z$, $\ddot x(t_0)$ exists and
$$
\frac{d}{dt}\mid_{t=t_0}(H(x(t)) \dot x(t)) \in  \frac{1}{2}\sum_{i,j}\dot x_i(t_0)\dot x_j(t_0) \partial h_{ij}(x(t_0)).
$$
From Lemma \ref{lem-5.4}, for every such $t_0$, if in addition $x(t_0)\in \R^p\times\{0\}$ and $\dot x(t_0)\in \R^p\times\{0\}$, then $x_p(t)$ has a contact of order $2$ with $y(t)$ and we are done.
\end{proof}

\subsection{Projecting geodesics on submanifolds : the Riemannian case}\label{sec-5.3} 

Our aim, in this section, is to prove another version of Lemma \ref{lem-5.4} in a different geometric context. Let $\CM$
be a $\mathcal C^3$ Riemannian manifold with distance $d$, of dimension $k$, and let $\CN$ be a submanifold of dimension $p$. 

\begin{figure}
\centerline{\resizebox{\textwidth}{!}{\input {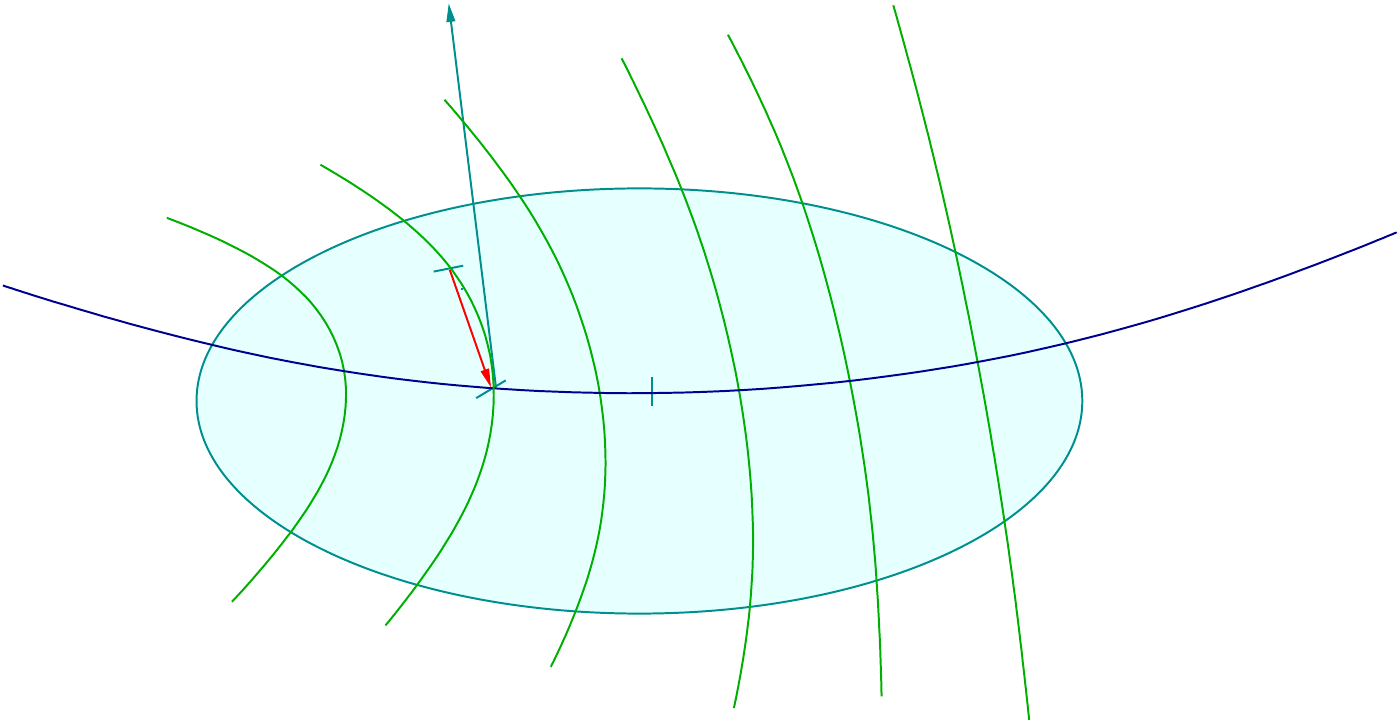_t}}}
\caption{The projection $K: \CM \rightarrow \CN$\label{proj-K}.}
\end{figure}

Let us first define the {\bf projection onto} $\CN$ \rev{(Fig.\ref{proj-K})}{mi41}. To each $q \in \CN$ and to a vector 
$u \ne 0$ normal to $\CN$ at $q$ we associate the geodesic $\ga_{q,u}$ in $\CM$  such that $\ga_{q,u}(0) = q$ and 
$\dot\ga_{q,u}(0) = u$. Let $n \in \CN$ be given, and let $U$ be an open neighborhood of $n$ such that, for each $m \in
U$ there exists a unique geodesic arc $\ga_{q,u}(t)$, $t$ in an open interval containing $0$, contained in $U$ and
containing $m$. Thus $U$ is the union of such geodesic arcs and two of them have always a void intersection. This
picture defines a map $K : U \ra \CN$ by $K(m) = q$ if $m = \ga_{q,u}(t)$. The map $K$ is the {\bf projection map onto}
$\CN$. It has the following classical properties:
\begin{enumerate}
 \item It is defined in the neighborhood $U$ of $n \in \CN$,
 \item For each $m \in U$, $K(m)$ is the unique point in $\CM$ such that $$\inf_{q \in \CN} d(m,q) = d(m, K(m))$$
 \item $K$ is $\mathcal C^2$. 
\end{enumerate}
See Li-Nirenberg \cite{li-nir} or Beltran-Dedieu-Malajovich-Shub \cite{BDMS}.

Let $\al : \CM \ra ]0, \infty[$ be a locally Lipschitz, regular map (see section  \ref{sec-2.4}). It defines a
conformal Lipschitz-Riemann structure on 
$\CM$ associated with the inner product
$$\left\langle \cdot , \cdot \right\rangle_{\al, m} = \al(m) \left\langle \cdot, \cdot \right\rangle_{m}.$$
We call it the $\al-$structure. We suppose that $\al$ is $\mathcal C^2$ when it is restricted to $\CN$ so that $\CN$ is $\mathcal C^2$
and not only Lipschitz for the induced $\al-$structure.


\begin{proposition}\label{prop-projectingRiemannian}
Under the hypotheses above, let $\ga:[a,b]\ra\CM$ be a geodesic curve in $\CM$ for the $\al-$structure. Then, there exists a zero-measure set $Z\subseteq[a,b]$ such that for $t_0\in[a,b]\setminus Z$ the following holds:

``If $\ga(t_0)\in \CN$ and $\dot \ga(t_0)\in T_{\ga(t_0)}\CN$, then the projection \rev{$\ga_{\CN}(t)=(K\circ \ga)(t)$}{mi42} of $\ga$ onto $\CN$ has a contact of order $2$ with $\de(t)$, the unique geodesic in $\CN$ such that $\de(t_0)=\ga(t_0)$ and $\dot \de(t_0)=\dot \ga(t_0)$''.
\end{proposition}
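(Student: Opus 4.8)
The plan is to deduce this from the Euclidean statement, Lemma~\ref{lem-5.4}, after choosing near each point of $\CN$ met by $\ga$ a system of Fermi coordinates adapted to $\CN$. Fix $n\in\CN$ and define a chart $\varphi\colon U\to\R^k$ by letting $\varphi^{-1}(x',x'')$ be the point reached, at time $1$, along the base--metric geodesic issuing from $c(x')$ with initial velocity $\sum_j x''_j\,E_j(c(x'))$, where $c$ is a chart of $\CN$ near $n$ and $E_1,\dots,E_{k-p}$ is an orthonormal frame of the normal bundle of $\CN$; this is legitimate because $\CM$ is $C^3$. I would use three properties of such a chart: (i) $\varphi(U\cap\CN)=\varphi(U)\cap(\R^p\times\{0\})$, and along $\CN$ the coordinate frame is adapted to the splitting $T\CM=T\CN\oplus N\CN$, so $D\varphi$ carries $T\CN$ onto $\R^p\times\{0\}$ there; (ii) the base metric read in $\varphi$ is block diagonal at every point of $\R^p\times\{0\}$ (because there $T\CN\perp N\CN$); and (iii), after shrinking $U$ so that the projection $K$ of this section is defined on $U$, one has $\varphi\circ K\circ\varphi^{-1}=\pi_{\R^p}$ on $\varphi(U)$, since $\varphi^{-1}(x',x'')$ lies on the normal geodesic through $c(x')\in\CN$.

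Next I would transfer the $\al$--structure to the chart: on each connected component of $\{t:\ga(t)\in U\}$ the curve $x(t)=\varphi(\ga(t))$ is a geodesic for the Lipschitz--Riemann structure given by $H(x)=\al(\varphi^{-1}(x))\,G(x)$, where $G$ is the base metric in the chart. I would then check that $H$ satisfies, at every point of $\R^p\times\{0\}$, the hypotheses of Lemma~\ref{lem-5.4}: it is block diagonal there by (ii), being a positive scalar times a block--diagonal matrix; its coefficients $h_{jl}=(\al\circ\varphi^{-1})\,G_{jl}$ are regular there, as products of the regular, locally Lipschitz function $\al\circ\varphi^{-1}$ with the $C^1$ functions $G_{jl}$ (the calculus of generalized gradients, \cite{cla}); and the induced upper--left block $H_p$, viewed as a function of $x_1,\dots,x_p$, equals $(\al\circ c)\,G_p$, hence is $C^2$ since $\al|_\CN$ is $C^2$ and the induced base metric $G_p$ is smooth.

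Finally I would apply Lemma~\ref{lem-5.4} pointwise. Remark~\ref{rem-2-1}, applied to $x$, produces a measure--zero set $Z_U$ of parameters $t_0$ (with $\ga(t_0)\in U$) off which $\ddot x(t_0)$ exists and satisfies the second--order differential inclusion; this is exactly hypothesis~$9$ of Lemma~\ref{lem-5.4}. Covering the compact set $\ga([a,b])\cap\CN$ by finitely many such charts and letting $Z$ be the union of the corresponding exceptional sets, fix $t_0\notin Z$ with $\ga(t_0)\in\CN$ and $\dot\ga(t_0)\in T_{\ga(t_0)}\CN$, and work in a chart $\varphi$ around $\ga(t_0)$. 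Then $x(t_0)\in\R^p\times\{0\}$ and $\dot x(t_0)\in\R^p\times\{0\}$ by (i); after translating the chart so that $x(t_0)$ becomes the origin (a translation by a point of $\R^p\times\{0\}$, which preserves all the structure just described) and the parameter so that $t_0$ becomes $0$, hypotheses $1$--$9$ of Lemma~\ref{lem-5.4} hold, with $x_p=\pi_{\R^p}\circ x$ the coordinate projection and $y$ the geodesic of the induced $H_p$--structure through $x(t_0)$ with velocity $\dot x(t_0)$. Lemma~\ref{lem-5.4} gives $x_p(t)=y(t)+o((t-t_0)^2)$. Since $\varphi^{-1}\circ x_p=K\circ\ga=\ga_\CN$ by (iii), and $\varphi^{-1}\circ y=\de$ because the induced $H_p$--structure is the chart expression of the induced $\al$--structure on $\CN$, and since a diffeomorphism preserves order--$2$ contact, one concludes $\ga_\CN(t)=\de(t)+o((t-t_0)^2)$.

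I expect the only real work to be this bookkeeping: producing a single coordinate system that simultaneously flattens $\CN$, block--diagonalizes the base metric along $\CN$, and turns $K$ into $\pi_{\R^p}$, and then verifying that the conformal, merely Lipschitz factor $\al$ leaves intact every regularity hypothesis of Lemma~\ref{lem-5.4}. No new analytic difficulty should arise beyond Lemma~\ref{lem-5.4} and Remark~\ref{rem-2-1} themselves.
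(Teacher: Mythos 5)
Your proposal is correct and follows essentially the same route as the paper: both construct Fermi-type coordinates adapted to $\CN$ (flattening $\CN$, block-diagonalizing the ambient metric along $\CN$, and turning normal geodesics into orthogonal straight lines so that $K$ becomes the coordinate projection), transfer the $\al$-structure to the chart by declaring the chart an isometry, check regularity of $H$, $C^2$-regularity of the upper-left block along $\R^p\times\{0\}$, and reduce to the Euclidean result (you invoke Lemma~\ref{lem-5.4} together with Remark~\ref{rem-2-1}; the paper invokes Proposition~\ref{prop-projectingeuclidean}, which packages exactly those two). The only cosmetic difference is your explicit compactness/covering argument to assemble the global null set $Z$ from the chart-local ones, which the paper handles more tersely but identically in substance.
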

\begin{remark}
If $\CM$, $\CN$ and $\alpha$ are assumed to be smooth then $Z=\emptyset$ in Proposition \ref{prop-projectingRiemannian}. See for example the proof of Proposition 5.9 in \cite{VV}.
\end{remark}

\begin{proof} The proof consists in a transfer from $\CM$ to $\R^k$ where we apply Proposition \ref{prop-projectingRiemannian}. Let
$$
Z=\left\{t_0\in[a,b]:\ga(t_0)\in \CN,\dot \ga(t_0)\in T_{\ga(t_0)}\CN\text{ but }\lim_{t\ra t_0}\frac{\ga_\CN(t)-\de(t)}{(t-t_0)^2}\neq0\right\}.
$$
We have to check that $Z$ is a zero measure set. It suffices to see that for every $t\in(a,b)$ there is an open interval $I$ containing $t$ and such that $I\cap Z$ has zero measure. Without 
\thirdrev{loss}{Remark 31} of generality, we may assume that $t=0$. Thus, let $t=0\in(a,b)$ and let $n=\ga(0)$.

Since $\CM$ is $\mathcal C^3$, the normal bundle to $\CN$ is $\mathcal C^2$ and there exists a $\mathcal C^2$ diffeomorphism $\phi : U \ra V
\subset \R^k$, where $V$ is an open set containing $0$, satisfying
\begin{enumerate}
 \item $\phi(n) = 0$,
 \item $\phi(U \cap \CN) = V \cap \left( \R^p \times \left\lbrace 0 \right\rbrace\right)  $,
 \item For any $q \in \CN$ and any vector $u \ne 0$ normal to $\CN$ at $q$, $\phi\left( \ga_{q,u}\right)$ is a straight
line in $\R^k$ orthogonal to
$\R^p \times \left\lbrace 0 \right\rbrace $.
\end{enumerate}
We make $\phi$ an isometry in defining on $V \subset \R^k$ a Lipschitz-Riemannian structure by
$$\left\langle D\phi(m)u, D\phi(m)v \right\rangle_{\phi(m)} = \al(m) \left\langle u,v \right\rangle_m$$
for any $m \in U$, and $u,v \in T_m\CM$. Let us denote $x = \phi(m)$, $a = D\phi(m)u$, $b = D\phi(m)v$, we also write
this scalar product
$$\left\langle a, b \right\rangle_{x} = b^T H(x) a$$
where $H$ is a locally Lipschitz map from $V$ into the $k \times k$ positive definite matrices. 

Notice that $H$ is regular because $\al$ is regular in $\CN$. 

Since for every $\hat{n}\in \CN\cap U$,
$$D\phi(\hat{n})\left( T_{\hat{n}} \CN \right)  = \R^p \times \left\lbrace 0 \right\rbrace \mbox{ and } D\phi(\hat{n})\left( \left(
T_{\hat{n}}\CN\right) ^\perp\right)  =  \left\lbrace 0 \right\rbrace \times \R^{k-p}, $$
$H(x)$ has the block structure
$$H(x) = \left( 
\begin{array}{cc}
H_p(x) & 0 \\ 
0 & H_{k-p}(x)
\end{array}
\right) .$$ 

Since $\al$ is $\mathcal C^2$ when restricted to $\CN$ we have the same regularity for the restriction of $H$ to $\R^p \times
\left\lbrace 0 \right\rbrace$.

Since $\phi$ is an isometry the curves $\phi \circ \ga$ and $\phi \circ \de$ are geodesics in $\R^k$ and $\R^p \times
\left\lbrace 0 \right\rbrace$ respectively, and, from the definition of $\phi$, the orthogonal projection (in the
Euclidean meaning) of $\phi \circ \ga$ onto $\R^p \times \left\lbrace 0 \right\rbrace$ is equal to $\phi \circ
\ga_\CN$. 

Thus, the hypotheses of Proposition \ref{prop-projectingeuclidean} are satisfied so that $\phi \circ \ga_\CN$ and $\phi \circ \de$ have an
order $2$ contact at every $t$ out of a zero measure set $Z_0$. This gives
easily an order $2$ contact for $\ga_\CN$ and $\de$ at $t\not\in Z_0$ in $\CM$ in terms of the $\al-$distance but also, since
$1/\al$ is locally Lipschitz, in terms of the initial Riemannian distance. The proposition follows.
\end{proof}

\subsection{Arriving to the main theorem}

We are now ready to state the main theorem in this section:

\begin{theorem}[Piecing together]\label{the-5.2} 
$\CM = \cup_{i=1}^\infty \CM_i$ is a $\mathcal C^3$ Riemannian manifold, enumerable union
 of the submanifolds $\CM_i$. 
Let  $\alpha : \CM \ra ]0,\infty[$ be a locally Lipschitz mapping. Assume that:
\begin{enumerate} 
\item $\alpha$ is regular,
\item For each $i$, the restriction of $\alpha$ to $\CM_i$ is $\mathcal C^2$ and self-convex in $\CM_i$,
\item $\SD \alpha > -\infty$.
\end{enumerate}
Then, $\alpha$ is self-convex in $\CM$.
\end{theorem}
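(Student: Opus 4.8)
The plan is to take an arbitrary geodesic $\ga:[a,b]\to\CM$ for the condition ($\al$-)structure $\CM_\ka$ and prove that $t\mapsto\log\al(\ga(t))$ is convex; by Definition~\ref{def-3-MODIFIED} this is precisely self-convexity of $\al$ in $\CM$. By Theorem~\ref{the-2.1} the curve $\ga$ is $C^{1+Lip}$, so its second derivative exists a.e.\ and is essentially bounded. Setting $g:=\log(\al\circ\ga)$, we apply Burkill's Theorem~\ref{the-5.1}: it suffices to show that $\SD g(t)>-\infty$ for every $t\in\,]a,b[$ and that $\SD g(t)\ge 0$ for almost every $t$.

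\emph{Pointwise lower bound $\SD g>-\infty$.} Hypothesis~$3$ and Lemma~\ref{lem-5.2} give $\SD(\log\al)>-\infty$; so for each $t$ there is a chart $\varphi$ with $\varphi(\ga(t))=0$ and $\SD(\log\al\circ\varphi^{-1})(0;w)>-\infty$ for all $w\in\R^k$. In coordinates $\tilde\ga=\varphi\circ\ga$ is again $C^{1+Lip}$, hence $\tilde\ga(t\pm h)=\pm h\,v_0+\rho_\pm(h)$ with $v_0=\dot{\tilde{\ga}}(t)$ and $\|\rho_\pm(h)\|\le C h^2$. Since $\log\al$ is locally Lipschitz, replacing $\tilde\ga(t\pm h)$ by $\pm h v_0$ changes the symmetric second difference of $g$ by at most $O(h^2)$, so $\SD g(t)\ge\SD(\log\al\circ\varphi^{-1})(0;v_0)-O(1)>-\infty$.

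\emph{A.e.\ lower bound $\SD g\ge 0$.} Put $T_j=\ga^{-1}(\CM_j)$, so $[a,b]=\bigcup_j T_j$. By the Lebesgue density theorem, and after discarding the countably many null sets given by Remark~\ref{rem-2-1} (points where $\ddot\ga$ fails to exist or to satisfy the geodesic inclusion) and by Proposition~\ref{prop-projectingRiemannian} applied to each $\CN:=\CM_j$, it is enough to prove $\SD g(t)\ge 0$ at a density point $t$ of some $T_j$. At such a point, taking $t_n\to t$ in $T_j$ and working in a chart adapted to $\CN$, the secant directions $(\ga(t_n)-\ga(t))/(t_n-t)$ lie in $T_{\ga(t)}\CN$ in coordinates, so $\dot\ga(t)\in T_{\ga(t)}\CN$. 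Moreover, in such a chart the component $v(s)$ of $\ga$ normal to $\CN$ is $C^{1+Lip}$ with $v(t)=\dot v(t)=0$ and vanishes on $T_j$, which has density $1$ at $t$; using the Lipschitz bound on $\dot v$ one checks that if $v(s)$ were not $o((s-t)^2)$ then a whole subinterval near $t$, of length comparable to its distance from $t$, would miss $T_j$, contradicting the density hypothesis. Hence $v(s)=o((s-t)^2)$, i.e.\ $\ga$ has contact of order $2$ at $t$ with its nearest-point projection $\ga_{\CN}=K\circ\ga$ onto $\CN$. By Proposition~\ref{prop-projectingRiemannian}, $\ga_{\CN}$ has contact of order $2$ at $t$ with the $\CN_\ka$-geodesic $\de$ satisfying $\de(t)=\ga(t)$, $\dot\de(t)=\dot\ga(t)$; therefore $\ga$ and $\de$ have contact of order $2$ at $t$. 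As $\log\al$ is locally Lipschitz, the symmetric second differences of $\log(\al\circ\ga)$ and $\log(\al\circ\de)$ at $t$ differ by $o(h^2)$, whence $\SD g(t)=\SD\bigl(\log(\al\circ\de)\bigr)(t)=\bigl(\log(\al\circ\de)\bigr)''(t)$. Finally $\al|_{\CN}$ is $C^2$ and self-convex (hypothesis~$2$) and $\de$ is a geodesic of $\CN_\ka$, so $\log(\al\circ\de)$ is convex and its second derivative is $\ge 0$.

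With both properties established, Theorem~\ref{the-5.1} gives that $g=\log(\al\circ\ga)$ is convex, i.e.\ $\al$ is self-convex. The main obstacle is the a.e.\ bound: one must (i) obtain the $o((s-t)^2)$ vanishing of the normal component of $\ga$ at density points of $T_j$, which genuinely uses $\ga\in C^{1+Lip}$ rather than merely $C^1$ (Theorem~\ref{the-2.1}), and (ii) ensure the tangency hypothesis of Proposition~\ref{prop-projectingRiemannian} holds exactly where it is applied, while bookkeeping the countable family of exceptional null sets — one for each piece $\CM_j$.
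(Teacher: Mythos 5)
Your proposal is correct and follows the paper's overall architecture: reduce to Burkill's Theorem~\ref{the-5.1}, with Lemma~\ref{lem-5.2} (applied in a chart and combined with $\ga\in C^{1+Lip}$) handling the bound $\SD>-\infty$; then locate $t$ at a density point of some $\ga^{-1}(\CM_j)$, discard the null sets from Remark~\ref{rem-2-1} and Proposition~\ref{prop-projectingRiemannian}, deduce $\dot\ga(t)\in T_{\ga(t)}\CM_j$, and compare with the $C^2$ geodesic $\de$ of $(\CM_j)_\ka$ via Proposition~\ref{prop-projectingRiemannian}. You diverge from the paper in exactly one step: how to relate $\ga$ to its projection $\ga_j=K\circ\ga$. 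The paper sidesteps any estimate on the gap $\ga-\ga_j$ by restricting the symmetric second difference quotient to increments $h$ with $t\pm h$ in the symmetric density set $Q=\{s=t+h : t\pm h\in P\}$, on which $\ga$ and $\ga_j$ coincide identically, and then observing that $t$ is itself a density point of $Q$. You instead prove, using $\ga\in C^{1+Lip}$, the quantitative statement that the normal component $v(s)$ of $\ga$ to $\CM_j$ satisfies $v(s)=o((s-t)^2)$ at density points of $\ga^{-1}(\CM_j)$: since $v(t)=\dot v(t)=0$ and $\dot v$ is Lipschitz, a bound $|v(s_n)|\ge c(s_n-t)^2$ along $s_n\to t$ would exclude from $\ga^{-1}(\CM_j)$ intervals around each $s_n$ of length proportional to $|s_n-t|$, contradicting density. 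This yields second-order contact of $\ga$ with $\ga_j$, hence with $\de$, and gives the full limit $\SD\log(\al\circ\ga)(t)=\bigl(\log(\al\circ\de)\bigr)''(t)\ge 0$ rather than the one-sided bound on the limsup that the $Q$ argument produces. Both steps are valid; the paper's $Q$ trick is shorter and avoids any estimate, while your quantitative lemma --- a $C^{1+Lip}$ curve has second-order contact with a $C^2$ submanifold at density points of the preimage --- is a clean statement of independent interest that makes the order-$2$ contact of $\ga$ itself (not merely of $\ga_j$) with $\de$ explicit.
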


\begin{proof} Once again we add to $\CM$ the $\alpha-$structure. If this theorem is false, there exists a geodesic $\ga$
in $\CM$ for the $\alpha-$structure such that 
$$\SD \log(\al(\ga(t))) < 0$$
on a positive measure set $P \subset \R$ (Theorem \ref{the-5.1} and Lemma \ref{lem-5.2}). Since an enumerable union of
zero-measure sets is also a zero-measure set, we can suppose that 
$P \subset \CM_i$
for some $i$, so that $\ga(t) \in \CM_i$ for every $t \in P$. According to the Lebesgue Density Theorem, almost all
points $t \in P$ are density points, that is
$$\lim_{\ep \ra 0} \frac{\mbox{meas}\left( P \cap [t-\ep , t + \ep ] \right) }{2\ep} = 1.$$
We remove the ``non-density points'' from $P$ to obtain a new set, also called $P$, with positive measure and only
density points. Since $\ga \in \mathcal C^{1+Lip}$ (Theorem 
\ref{the-2.1}), the second derivative $\ddot \ga(t)$ exists for almost all $t$. We also remove from $P$ the zero measure set of Proposition \ref{prop-projectingRiemannian}.

Let $t \in P$ be given. Since it is a density point of $P$, we have $s \in P$ for ``a lot of points'' close to $t$.
Since $\ga(s) \in \CM_i$ for such points,
and since $\ga$ is $\mathcal C^1$, we get
$$\dot \ga(t) \in T_{\ga(t)}\CM_i.$$
Take now the geodesic $\de$ in $\CM_i$ for the induced $\al-$structure such that $\de(t) = \ga(t)$ and $\dot \de(t) =
\dot \ga(t)$. As we have removed the zero-measure set of Proposition \ref{prop-projectingRiemannian}, $\ga_i$ and $\de$ have a contact of order $2$ at $t$. 

By self-convexity of 
$\al$ in $\CM_i$, and since $\de$ is $\mathcal C^2$ we get
$$\SD \log \circ \al \circ \de (t) = \frac{d^2}{dt^2}\log \circ \al \circ \de (t) \ge 0.$$
Let us now consider 
$$ \De^2(h) = \frac{\log \circ \al \circ \ga (t+h) + \log \circ \al \circ \ga (t-h) - 2\log \circ \al \circ \ga
(t)}{h^2}.$$
It is not difficult to prove that $t$ is a density point of
$$Q = \left\lbrace s=t+h \in P \ : \ t-h \in P \right\rbrace .$$
Let us denote by $\ga_i$ the projection of $\ga$ on $\CM_i$ (see section \ref{sec-5.3}). For the points $s=t+h \in Q$,
one has
$\ga(t+h) = \ga_i(t+h)$, $\ga(t-h) = \ga_i(t-h)$, and $\ga(t) = \ga_i(t)$, thus
$$\De^2(h) = \frac{\log \circ \al \circ \ga_i (t+h) + \log \circ \al \circ \ga_i (t-h) - 2\log \circ \al \circ \ga_i
(t)}{h^2}.$$
From the contact of order $2$ between $\ga_i$ and $\de$ we then conclude,
$$\De^2(h) = \frac{\log \circ \al \circ \de (t+h) + \log \circ \al \circ \de (t-h) - 2\log \circ \al \circ \de (t) +
o(h^2)}{h^2}.$$
Since $\de$ is $\mathcal C^2$, taking the limit as $h \ra 0$ gives
$$\lim \De^2(h) = \frac{d^2}{dt^2} \log \circ \al \circ \de (t).$$
Since this last expression is nonegative we obtain
$$\SD \log(\al(\ga(t))) \ge \lim \De^2(h) \ge 0$$
which contradicts our hypothesis $\SD \log(\al(\ga(t))) < 0$ on $P$.
\end{proof}

\section{Proof of Theorem \ref{th-1}}

Theorem \ref{th-1} is a consequence of Theorem \ref{the-5.2} applied to $\CM = \GL$ considered as the 
union of the submanifolds $\CP_{(k)}$ (see section \ref{sec-4})
and to the mapping $\alpha (A) = \si_n(A)^{-2}$, the inverse of the square of the smallest singular value of $A \in
\GL$. According to propositions \ref{prop-1} and \ref{prop-2} we just have to prove that $\al$ is a regular map and that
$\SD \al > -\infty$. Let us start with this last inequality. 

We must prove that for every $A\in\GL$, $B\in\Knm$,
\[
\SD\sigma_n^{-2}(A;B)=\limsup_{h\mapsto0}\frac{\sigma_n^{-2}(A_h)+\sigma_n^{-2}(A_{-h})-2\sigma_n^{-2}(A)}{h^2}>-\infty,
\]
where $A_h=A+hB$. Now, let $\CS_n^+$ be the set of symmetric, positive definite $n \times n$ matrices. Then,
\[
\sigma_n^{-2}(A_h)+\sigma_n^{-2}(A_{-h})=\lambda_n^{-1}(A_hA_h^*)+\lambda_n^{-1}(A_{-h}A_{-h}^*).
\]
where, $\la_n$ denotes the smallest eigenvalue. Since, for any $S \in \CS_n^+$,
$$ \lambda_n(S) = \inf_{u \in \R^n, \ \|u\|=1} u^T S u,$$
it is a concave function of $S$, and $\lambda_n^{-1}$ is convex. Thus, 
\[
\lambda_n^{-1}(A_hA_h^*)+\lambda_n^{-1}(A_{-h}A_{-h}^*)\geq
2\lambda_n^{-1}\left(\frac{A_hA_h^*+A_{-h}A_{-h}^*}{2}\right) =2\lambda_n^{-1}(AA^*+h^2BB^*).
\]
We conclude that
\[
\SD\sigma_n^{-2}(A;B)\geq\limsup_{h\mapsto0}\frac{2\lambda_n^{-1}(AA^*+h^2BB^*)-2\lambda_n^{-1}(AA^*)}{h^2}.
\]
This last quantity is bounded in absolute value \thirdrev{since}{Remark 32} $\lambda_n^{-1}$ is locally Lipschitz, so in particular
$\SD\sigma_n^{-2}(A;B)>-\infty$. 

To prove that $\al$ is regular it suffices to write it as the composition of $\mathcal C^1$ maps and of the convex
$\lambda_n^{-1}$ which is also a regular map (see 
\cite{cla} Prop. 2.3.6). This finishes the proof of our Main Theorem \ref{th-1}.

\section{The solution variety}
As in \cite{BDMS}, we are also interested in the log--convexity of $\sigma_n(A)^{-1}$ in the solution variety: 
\[
\CW=\{(A,x)\in \mathbb{GL}_{n,n+1}\times\P(\K^{n+1}):Ax=0\}.
\]

\begin{remark}
In \cite{BDMS} we have sometimes taken $A$ to lie in the unit sphere
of $\mathbb K^{n \times m}$ or even the projective space
$\mathbb P(\mathbb K^{n \times m})$. The interested reader can
check \cite{BDMS} for the relations between self-convexity in
the various settings. 
\end{remark}

\begin{theorem}\label{the-7.8}
For any condition geodesic $t \ra (A(t),x(t))$ in $\CW$, the map \rev{$t \ra \log \left(\si_n^{-2} (A(t))\right)$}{ma01} is convex. 
\end{theorem}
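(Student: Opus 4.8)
The plan is to apply the ``piecing together'' machinery of Theorem~\ref{the-5.2} to the solution variety $\CW$, exactly as Theorem~\ref{th-1} was derived for $\GL$. First I would set up the decomposition of $\CW$ into smooth pieces: for each composition $(k)=(k_1,\dots,k_u)$ of $n$ (as in Section~\ref{sec-4}), let $\CW_{(k)} = \{ (A,x) \in \CW : A \in \CP_{(k)} \}$. Since $\CP_{(k)}$ is a smooth embedded submanifold of $\GL$ by Proposition~\ref{prop-1}, and the incidence condition $Ax=0$ cuts out a smooth submanifold of $\CP_{(k)} \times \P(\K^{n+1})$ (the projection $\CW_{(k)} \to \CP_{(k)}$ is a fiber bundle whose fibers are the projectivized kernels, of locally constant dimension $m-n+\dim\ker A$; here $m=n+1$ so the kernel is a line when $A\in\GL$, i.e. the fiber is a single point, so $\CW_{(k)} \cong \CP_{(k)}$), each $\CW_{(k)}$ is a $C^2$ submanifold, and $\CW = \bigcup_{(k)} \CW_{(k)}$ is a finite (hence enumerable) union. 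Restricted to $\CW_{(k)}$ the condition metric (the Frobenius metric on the $A$-component plus the natural metric on $\P(\K^{n+1})$, scaled by $\sigma_n(A)^{-2} = \sigma_u(A)^{-2}$) is smooth.

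Next I would verify the three hypotheses of Theorem~\ref{the-5.2} for $\alpha(A,x) = \sigma_n(A)^{-2}$ on $\CW$. Regularity of $\alpha$ and $\SD\alpha > -\infty$ follow from the corresponding facts for $\sigma_n^{-2}$ on $\GL$ established at the end of Section~6 (composition with the smooth projection $\CW \to \GL$, $(A,x)\mapsto A$, preserves both properties; note $\SD$ bounded below is a local coordinate statement and the projection is a submersion onto $\GL$, so one can choose adapted charts). The remaining and genuinely substantive hypothesis is self-convexity of $\alpha$ restricted to each piece $\CW_{(k)}$. Here I would invoke Proposition~\ref{product}: since $\CW_{(k)} \cong \CP_{(k)}$ via the projection $\pi$ to the $A$-factor, and $\alpha = \sigma_u^{-2}\circ\pi$ on $\CW_{(k)}$ is the pullback of the self-convex function on $\CP_{(k)}$ from Proposition~\ref{prop-2}; strictly, $\CW_{(k)}$ is not literally a product $\CP_{(k)}\times(\text{fiber})$, but because the fiber is a point (in the $m=n+1$ case $Ax=0$ determines $x=[\ker A]$), the projection $\CW_{(k)}\to\CP_{(k)}$ is an isometry of condition metrics, so self-convexity transfers verbatim.

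Once all three hypotheses are checked, Theorem~\ref{the-5.2} yields that $\alpha=\sigma_n^{-2}$ is self-convex on $\CW$, which is precisely the assertion that $t\mapsto \log(\sigma_n^{-2}(A(t)))$ is convex along every condition geodesic in $\CW$. The main obstacle, I expect, is the careful identification of the induced metric on $\CW_{(k)}$ and the proof that the projection to $\CP_{(k)}$ is an isometry (or, more generally, a Riemannian submersion onto which Proposition~\ref{product}-type reasoning applies) — one has to check that the fiber direction, trivial as it is when $m=n+1$, really contributes nothing, and that the metric on $\CW_{(k)}$ inherited from $\CW$ agrees with the pullback of the condition metric on $\CP_{(k)}$ up to the factor coming from the $x$-component, which for a single-point fiber is vacuous. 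A secondary subtlety is that $\CW$ as defined sits inside $\GL_{n,n+1}\times\P(\K^{n+1})$, so one must make precise which Riemannian metric on this ambient product is meant (the product of Frobenius and Fubini–Study), and confirm that the restriction to $\CW$ is what the condition length integral in the statement refers to; modulo these bookkeeping points the argument is a direct transcription of the proof of Theorem~\ref{th-1}.
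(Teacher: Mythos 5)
Your overall architecture (decompose $\CW$ into the strata $\CW_{(k)}$, apply Theorem~\ref{the-5.2}, deduce regularity and $\SD\alpha>-\infty$ from the corresponding facts for $\sigma_n^{-2}$ on $\GL$) is exactly what the paper does, and those parts of your argument are fine. But the way you establish the central hypothesis~(2) of Theorem~\ref{the-5.2} --- self-convexity of $\alpha$ restricted to $\CW_{(k)}$ --- has a genuine gap.

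You assert that because the fiber of $\pi_1\colon\CW_{(k)}\to\CP_{(k)}$ is a single point, the projection ``is an isometry of condition metrics, so self-convexity transfers verbatim.'' It is a bijection, but it is not an isometry. A tangent vector to $\CW_{(k)}$ at $(A,x)$ is a pair $(\dot A,\dot x)$ with $\dot A x + A\dot x = 0$, and the condition metric on $\CW$ is (before the conformal factor) the Frobenius norm of $\dot A$ \emph{plus} the Fubini--Study norm of $\dot x$, since $\CW$ sits in the Riemannian product $\mathbb{GL}_{n,n+1}\times\P(\K^{n+1})$. Because $x=[\ker A]$ moves as $A$ moves, $\dot x$ is generically nonzero, so $\|(\dot A,\dot x)\|^2 = \|\dot A\|_F^2 + \|\dot x\|^2 > \|\dot A\|_F^2$. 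Consequently, geodesics of $\CW_{(k)}$ in the condition metric do not project to geodesics of $\CP_{(k)}$, and self-convexity on $\CP_{(k)}$ does not transfer to $\CW_{(k)}$ by a pullback argument. Proposition~\ref{product} is also not available: $\CW_{(k)}$ is the graph of $A\mapsto[\ker A]$ inside the product, not a product $\CP_{(k)}\times\CN$ with the product metric and a factor projection.

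The paper avoids this by reproving self-convexity on $\CW_{(k)}$ from scratch via Corollary~\ref{cor-1}, using the group $\U_n\times\U_{n+1}$ acting by $(U,V)\cdot(A,x)=(UAV^*,Vx)$. The telltale sign that the metric genuinely differs from the one on $\CP_{(k)}$ is the formula
\[
\|K(A,x)\|^2 = \|B_1A+AB_2^*\|^2 + \|B_2x\|^2,
\]
with the extra, generically nonzero term $\|B_2x\|^2$. That extra term happens to be nonnegative and only helps the final inequality, which is why the computation from Proposition~\ref{prop-2} (via Lemma~\ref{lem-5}) carries over with one more favourable summand --- but this must be checked; it does not follow from an isometry that does not exist. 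To repair your argument you would need to rerun the Corollary~\ref{cor-1} verification on $\CW_{(k)}$, as Proposition~\ref{prop-7.3} does, rather than invoking Proposition~\ref{product}.
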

As we have done in the case of $\GL$, we divide the proof in several sections.
\subsection{The smooth part of $\CW$}
Let $u \leq n$ and $(k)=(k_1, \ldots , k_u) \in \N^u $ such that $k_1 + \cdots + k_u=n$. We define $\CW_{(k)}=\{(A,x)\in
\CW: A\in\CP_{(k)}\}$.

\begin{proposition}\label{prop-7.3}
For any choice of $(k)$, the set $\CW_{(k)}$ is a smooth submanifold of $\CW$, $\sigma_u$ is a smooth function and
$\alpha=\sigma_u^{-2}$ is self--convex in $\CW_{(k)}$.
\end{proposition}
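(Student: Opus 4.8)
The plan is to treat the three assertions separately, reducing each to the corresponding statement about $\CP_{(k)}$ from Section~\ref{sec-4}. For the manifold structure I would first note that the projection $\pi_1:\CW\to\mathbb{GL}_{n,n+1}$, $(A,x)\mapsto A$, is a diffeomorphism: since every $A\in\mathbb{GL}_{n,n+1}$ has a one-dimensional kernel, $\pi_1$ is bijective, and the inverse $A\mapsto(A,\ker A)$ is smooth because, writing $\K^{n+1}=\langle v_0\rangle\oplus v_0^\perp$ with $v_0$ spanning $\ker A_0$, the kernel of $A$ near $A_0$ is spanned by $v_0-(A|_{v_0^\perp})^{-1}Av_0$, which depends smoothly on $A$. (Alternatively one repeats the quotient construction of Proposition~\ref{prop-1} with $G=\U_n\times\U_{n+1}$ acting by $(U,V)\cdot(A,x)=(UAV^*,Vx)$.) Under $\pi_1$ the set $\CW_{(k)}$ corresponds to $\CP_{(k)}$, a smooth embedded submanifold of $\mathbb{GL}_{n,n+1}$ by Proposition~\ref{prop-1}; hence $\CW_{(k)}$ is a smooth embedded submanifold of $\CW$, and $\sigma_u=\sigma_u\circ\pi_1$ is smooth on it.

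For self-convexity I would apply Corollary~\ref{cor-1} on $\CM=\CW_{(k)}$ with the condition metric ($\sigma_u^{-2}$ times the restriction of the product of the Frobenius metric and the metric of $\P(\K^{n+1})$), exactly as Proposition~\ref{prop-2} did for $\CP_{(k)}$, now with the group $G=\U_n\times\U_{n+1}$ (orthogonal groups if $\K=\R$) acting by $(U,V)\cdot(A,x)=(UAV^*,Vx)$. This action preserves the constraint $Ax=0$, acts by isometries of the condition metric, and leaves $\alpha=\sigma_u^{-2}$ invariant, so by unitary invariance it suffices to check the hypotheses of Corollary~\ref{cor-1} at a base point $p=(\Sigma,[e_{n+1}])$ with $\Sigma=\diag(\sigma_1 I_{k_1},\dots,\sigma_u I_{k_u})\in\CD_{(k)}$, noting $\Sigma e_{n+1}=0$. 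The key structural observation is that the stabilizer of $\Sigma$ in $G$ — the block-diagonal pairs $(\diag(U_1,\dots,U_u),\diag(U_1,\dots,U_u,W))$ with $W\in\U_1$ — automatically fixes $[e_{n+1}]$, so $\dim G(p)$ equals the dimension of the $\U_n\times\U_{n+1}$-orbit of $\Sigma$ inside $\CP_{(k)}$; combined with a dimension count as in Proposition~\ref{prop-2} this identifies $(T_pG(p))^\perp$ with the ``diagonal slice'' $\{(\dot\Sigma,0):\dot\Sigma\in T_\Sigma\CD_{(k)}\}$ (which is orthogonal to $T_pG(p)$ because the diagonal blocks of $B_1\Sigma-\Sigma B_2$ are purely imaginary).

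Then I would verify the three hypotheses of Corollary~\ref{cor-1}. For the first, the diagonal slice carries the induced condition metric of $\CD_{(k)}$, and it is totally geodesic in $\CW_{(k)}$ by the argument of Lemma~\ref{lem-4}(1): diagonalising a path via Lemma~\ref{lem-svd} does not increase its condition length, and dropping the $\P(\K^{n+1})$-component of the velocity decreases it further, so a minimizing condition geodesic between two points of the slice stays in the slice; hence $\hessk{\log(\alpha)}(p)$ restricted to $(T_pG(p))^\perp$ coincides with the Hessian on $\CD_{(k)}$, which is positive semidefinite by the log-convexity of $\sigma_u^{-2}$ on $\CD_{(k)}$ (Lemma~\ref{lem-4}). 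For the second, $\grad\alpha(p)=-\tfrac{2}{k_u\sigma_u^3}(E,0)$ with $E=\diag(0,\dots,0,I_{k_u})$ regarded as an $n\times(n+1)$ matrix with vanishing last column, so $\grad_\kappa\alpha$ is tangent to the slice and its flow keeps $x=[e_{n+1}]$ and $\Sigma$ diagonal, whence $D\phi_t(p)$ maps the diagonal slice at $p$ onto the one at $\phi_t(p)$, which is again $(T_{\phi_t(p)}G(\phi_t(p)))^\perp$. For the third, the infinitesimal generator of $a=(B_1,B_2)\in\mathcal{A}_n\times\mathcal{A}_{n+1}$ is $K(A,x)=(B_1A-AB_2,B_2x)$, so along the flow line $\|K\|^2=\|B_1\Sigma_t-\Sigma_t B_2\|_F^2+c$ with $c=\sum_{i\le n}|(B_2)_{i,n+1}|^2$ independent of $t$; consequently $\alpha\,D(\|K\|^2)(\grad\alpha)+\|K\|^2\|\grad\alpha\|^2$ equals the non-negative quantity $\tfrac{4}{k_u\sigma_u^6}J$ of Proposition~\ref{prop-2} (with $J\ge 0$ by Lemma~\ref{lem-5}) plus the non-negative term $c\|\grad\alpha\|^2$. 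Having checked the three conditions, Corollary~\ref{cor-1} yields that $\alpha=\sigma_u^{-2}$ is self-convex in $\CW_{(k)}$.

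The genuinely new point — and the only real obstacle — is that $\CW$ is not a metric product over $\mathbb{GL}_{n,n+1}$ (the point $x$ is tied to $A$ by $Ax=0$), so Proposition~\ref{product} does not apply and one must run the Corollary~\ref{cor-1} machinery directly on $\CW_{(k)}$; the care required is in tracking the $\P(\K^{n+1})$-component throughout, confirming that it contributes only the harmless term $c\|\grad\alpha\|^2$ in the third condition and does not disturb the orbit-orthogonality or the flow-invariance used in the first two. Everything else is a transcription of the arguments already carried out for $\CP_{(k)}$ in Section~\ref{sec-4}.
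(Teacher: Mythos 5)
Your proof is correct and follows the same overall strategy as the paper: reduce the manifold and smoothness statements to Proposition~\ref{prop-1}, then run Corollary~\ref{cor-1} with $G=\U_n\times\U_{n+1}$ acting by $(U,V)\cdot(A,x)=(UAV^*,Vx)$ at the base point $p=(\Sigma,[e_{n+1}])$, reducing the three conditions to the computations of Proposition~\ref{prop-2} and Lemma~\ref{lem-5}. The only divergences are minor and favorable to you: for the manifold structure, the paper applies the regular value theorem to $\psi(A,x)=Ax$ on $\CP_{(k)}\times(\K^{n+1}\setminus\{0\})$, whereas you observe that $\pi_1$ is a diffeomorphism of $\CW$ onto $\mathbb{GL}_{n,n+1}$ — both are valid, and yours also cleanly delivers the smoothness of $\sigma_u$. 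For the Hessian condition you explicitly verify, by a Lemma~\ref{lem-4}-style diagonalization argument, that the diagonal slice $\{(\dot\Sigma,0)\}$ is totally geodesic in $\CW_{(k)}$; the paper takes a ``condition geodesic in $T_pG(p)^\perp$'' without justifying that such a curve is a geodesic of $\CW_{(k)}$, so you are filling a small gap it leaves implicit. Likewise, in the third condition you isolate the extra nonnegative contribution $\|B_2 x\|^2\,\|\grad\alpha\|^2$ coming from the $\P(\K^{n+1})$ factor and note that the remainder is the quantity $J\ge 0$ of Lemma~\ref{lem-5} — this is exactly what the paper computes, just stated more explicitly. (Incidentally, your $\grad\alpha(p)=-\tfrac{2}{k_u\sigma_u^3}(E,0)$ is the correct exponent; the paper's displayed formula in this proof has a typo $\sigma_u^2$.)
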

\begin{proof}
Let us consider the map
\[
\begin{matrix}
\psi:&\CP_{(k)}\times\K^{n+1}\setminus \{0\}&\ra&\K^n\\
&(A,x)&\mapsto&Ax
\end{matrix}
\]
which is a smooth mapping between two smooth manifolds. 
Since $0$ is a regular value of $\psi$, its preimage $\psi^{-1}(0)$ 
is a smooth submanifold of $\CP_{(k)}\times\K^{n+1}\setminus\{0\}$. 
Moreover, $\sigma_u$ is the composition of the projection onto the 
first coordinate $\CW_{(k)}\ra\CP_{(k)}$ and the function 
$\sigma_u$ which is smooth by Proposition \ref{prop-1}.
To check that $\sigma_u$ is self--convex in $\CW_{(k)}$ we use Corollary \ref{cor-1} and proceed as in the proof 
of Proposition~\ref{prop-2}. 
Let $G=\U_n\times\U_{n+1}$, and consider the action
\[
\begin{matrix}
G\times\CW_{(k)}&\ra&\CW_{(k)}\\
((U,V),(A,x))&\mapsto&(UAV^*,Vx)
\end{matrix}
\]
Let $p=(\Sigma,e_{n+1})$ where $e_{n+1}^T=(0,\ldots,0,1)$ and $\Sigma\in\CD_{(k)}$ has ordered distinct singular values
$\sigma_1>\cdots>\sigma_u>0$. \rev{Recall that $T_pG(p)$ is the tangent
space in $p$ of the orbit $G(p)$ of $p$ by the Lie group $G$.}{mi43}
As in Propositions \ref{prop-1} and \ref{prop-2}, we have
\[
T_pG(p)=\{(B_1\Sigma+\Sigma B_2^*,B_2e_{n+1}):(B_1,B_2)\in\mathcal{A}_n\times\mathcal{A}_{n+1}\},
\]
\[
T_pG(p)^\perp=\{(\dot\Sigma,0):\dot\Sigma\in\CP_{(k)},\dot\Sigma\,\text{ is diagonal},\dot\Sigma e_{n+1}=0\}.
\]
Note that $T_pG(p)^\perp$ is isometric to the set of diagonal $n\times n$ matrices 
with eigenvalues $\sigma_1>\ldots>\sigma_u>0$ of respective multiplicities $k_1,\ldots,k_u$.

Let us check the conditions of Corollary \ref{cor-1}. By unitary invariance, we can choose a pair $p=(\Sigma,e_{n+1})$
as above.

\medskip

\noindent
{\bf 1.}  \rev{$\hessk{\log(\al)(p)}$}{ma10} is positive semi-definite in $(T_pG(p))^\perp$: let
$(\dot\Sigma,0)\in T_pG(p)^\perp$. Let $\gamma$ be a condition geodesic in $T_pG(p)^\perp$ such that
$\gamma(0)=(\Sigma,0)$, $\dot{\gamma}(0)=(\dot\Sigma,0)$. We have to check that
\[
\frac{d^2}{dt^2}\log\al(\gamma(t))\left.\right|_{t=0} \geq 0.
\]
This is true as $\al$ is log-convex in the set of diagonal $n\times n$ matrices with eigenvalues
$\sigma_1>\ldots>\sigma_u>0$ from Proposition \ref{prop-1}.

\medskip

\noindent {\bf 2.} We have to check that for small enough $t$, and for
\[
b=(\dot\Sigma,0)\in T_pG(p)^\perp,
\]
$D\phi_t(p)b$ is perpendicular to 
\[
T_{\phi_t(p)}G(\phi_t(p)),
\]
where $\phi_t$ is the flow of ${\rm grad}_\kappa\al$ in $\CW_{(k)}$. Now, as in the proof of Proposition \ref{prop-2},
the operator $\grad$ preserves the diagonal form of $(\Sigma,e_n)$ and hence $D\phi_t(p)b$ is of the form $(\Sigma',0)$
where $\Sigma'$ is diagonal with $\Sigma'e_n=0$. In particular, it is orthogonal to $T_{\phi_t(p)}G(\phi_t(p))$. Thus,
the second condition of Corollary \ref{cor-1} applies to our case.
\medskip

\noindent
{\bf 3.} For $(B_1,B_2)\in \mathcal{A}_n\times\mathcal{A}_m$, the vector field $K$ on $\CW_{(k)}$ generated by
$(B_1,B_2)$ is
\[
K(A,x)=\frac{d}{dt}\left(e^{tB_1}Ae^{tB_2^*},e^{tB_2}x\right)\left.\right|_{t=0}=(B_1A+AB_2^*,B_2x).
\]
Note that
\[
\|K(A,x)\|^2=\|B_1A+AB_2^*\|^2+\|B_2x\|^2.
\]
Thus,
\[
D(\|K\|^2)(A,x)(C,v)=\frac{d}{dt}\mid_{t=0}\left(\|K(A+tC,x+tv)\|^2\right)=
\]
\[
2\Re\langle B_1B_1^*A+AB_2B_2^*+2B_1^*AB_2^*,C\rangle+2\Re\langle B_2^*B_2x,v\rangle.
\]
Moreover,
\[
\grad\al(\Sigma,e_{n+1})=\left(-\frac{2}{k_u\sigma_u^2}E,0\right) \text{ where }
E=\diag(0,\ldots,0,\overset{k_u}{\overbrace{1,\ldots,1}}).
\]
Thus,
\[
\al(\Sigma,e_{n+1})D(\|K\|^2)(\Sigma,e_{n+1})(\grad\al(\Sigma,e_{n+1}))+\|K(\Sigma,e_{n+1})\|^2\|\grad\al(\Sigma,e_{n+1}
)\|^2=
\]
\[
\frac{2}{k_u\sigma_u^6}\left(-\sigma_u\Re\langle B_1B_1^*\Sigma+\Sigma B_2B_2^*-2B_1\Sigma B_2^*,E
\rangle+\|B_1\Sigma-\Sigma B_2\|^2+\|B_2x\|^2\right).
\]
This is positive from the proof of Proposition \ref{prop-2}.
\medskip

Hence, all the conditions of Corollary \ref{cor-1} are fulfilled and the Proposition follows.
\end{proof}

\subsection{Proof of Theorem \ref{the-7.8}}
Now we can prove Theorem \ref{the-7.8} using Theorem \ref{the-5.2} and Proposition \ref{prop-7.3}. Note that we have
$\CW=\cup_{(k)}\CW_{(k)}$ and $\alpha$ is smooth and self--convex in each $\CW_{(k)}$ by Proposition \ref{prop-7.3}.
From Theorem \ref{the-5.2} we just need to check that $\alpha$ is regular in $\CW$ and that $\SD\alpha>-\infty$. Since
\[
\alpha=\sigma_n^{-2}\circ\pi_1,
\]
where $\pi_1$ is the projection on the first coordinate, $\alpha$ is a smooth function. Now, consider the chart locally
given by $\pi_1^{-1}$, and note that $\alpha\circ\pi_1^{-1}=\sigma_n^{-2}$ is regular in $\GL$ from the proof of Theorem
\ref{th-1}. By definition, this means that $\alpha$ is regular in $\CW_{(k)}$. Using the same argument,
$\SD\sigma_n^{-2}>-\infty$ in $\GL$ also implies that $\SD\alpha>-\infty$ in $\CW$ and we are done.
\qed

\rev{}{mi44}

\section{Appendix} 

In this appendix we prove the following which gives a sufficient condition for the image 
of a submanifold under a group action to be a submanifold. 

\begin{lemma}\label{lem-7} Let $G$ be a Lie group acting on 
a smooth manifold $\CM$, and $\CD$ 
a smooth submanifold in $\CM$ 
Define on $G \times \CD$ the equivalence relation $(g,d) \CR (g',d')$ 
when $gd=g'd'$. 
Let us denote by
$$ \pi : G \times \CD \ra (G \times \CD) / \CR $$
the canonical surjection onto the quotient space, by $\immersion$ the map
$$\immersion : (G \times \CD) / \CR \ra \CM, \ \ \immersion(\pi(g,d)) = gd,$$
and by $\CP = \immersion( (G \times \CD) / \CR )$ the image of $\immersion$.
When the three following conditions are satisfied
\begin{enumerate}
\item The graph of $\CR$ is a closed \thirdrev{embedded}{Remark 33} submanifold in $(G \times \CD) \times (G \times \CD)$,
\item $\immersion$ is an immersion,
\item For every sequence $(x_k) \in (G \times \CD) / \CR$ such that $(\immersion(x_k))$ converges to 
$y \in \CP$ the sequence $(x_k)$ converges,
\end{enumerate}
then, $\CP$ is \rev{an embedded}{mi25} submanifold in $\CM$.
\end{lemma}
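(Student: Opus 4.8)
The plan is to read the statement as asserting that $\immersion$ is a smooth embedding of the quotient $\CQ := (G\times\CD)/\CR$ into $\CM$, with image $\CP$; an embedded submanifold is, by definition, the image of such an embedding. So I would check, in order, that $\immersion$ is (i) well defined and smooth, (ii) injective, (iii) an immersion, and (iv) a homeomorphism onto its image, and then quote the elementary fact that an injective immersion which is a homeomorphism onto its image is an embedding.

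For (i)--(iii): first I would invoke the quotient manifold theorem (Godement's criterion). By hypothesis~1 the graph of $\CR$ is a closed submanifold of $(G\times\CD)^2$, so $\CQ$ carries a unique smooth (Hausdorff) manifold structure for which $\pi : G\times\CD \to \CQ$ is a surjective submersion --- this is exactly the fact verified by hand in the proof of Proposition~\ref{prop-1}. The action map $\mu : G\times\CD \to \CM$, $\mu(g,d) = gd$, is smooth and constant on $\CR$-equivalence classes, hence factors as $\mu = \immersion \circ \pi$; since $\pi$ is a surjective submersion, $\immersion$ is automatically smooth, which gives (i). For (ii), $\immersion(\pi(g,d)) = \immersion(\pi(g',d'))$ means $gd = g'd'$, i.e.\ $(g,d)\,\CR\,(g',d')$, i.e.\ $\pi(g,d) = \pi(g',d')$. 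Item (iii) is precisely hypothesis~2, and it also records that $\dim \CP = \dim \CQ$.

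It remains to establish (iv). Since $\immersion$ is a continuous bijection onto $\CP$, I only need $\immersion^{-1} : \CP \to \CQ$ continuous, and as $\CP \subseteq \CM$ is first countable this is equivalent to sequential continuity. So let $\immersion(x_k) \to y$ in $\CP$. By hypothesis~3 the sequence $(x_k)$ converges in $\CQ$, say $x_k \to x$; by continuity of $\immersion$ then $\immersion(x_k) \to \immersion(x)$, and uniqueness of limits in the Hausdorff space $\CM$ forces $\immersion(x) = y$, hence $x = \immersion^{-1}(y)$. Thus $\immersion^{-1}(\immersion(x_k)) = x_k \to \immersion^{-1}(y)$, which proves sequential continuity of $\immersion^{-1}$. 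Now $\immersion$ is an injective immersion that is a homeomorphism onto its image, hence a smooth embedding, so $\CP = \immersion(\CQ)$ is an embedded submanifold of $\CM$ of dimension $\dim \CQ$. I expect the only genuinely delicate point to be the first step, namely applying the quotient manifold theorem to put a manifold structure on $\CQ$ and to know $\pi$ is a submersion; once that is granted, the rest is the routine principle that an injective immersion with continuous inverse on its image is an embedding.
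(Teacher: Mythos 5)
Your proof follows essentially the same route as the paper: invoke Godement's quotient-manifold criterion to give $(G\times\CD)/\CR$ a smooth structure for which $\pi$ is a submersion, factor the action map through $\pi$ to obtain smoothness of $\immersion$, check injectivity from the definition of $\CR$, use hypothesis~2 for the immersion property and hypothesis~3 for the homeomorphism onto the image, and conclude via the standard fact that an injective immersion which is a homeomorphism onto its image is an embedding. One small caution worth flagging: Godement's criterion needs not only that the graph $\CG$ of $\CR$ be a closed submanifold (hypothesis~1) but also that the first projection $pr_1 : \CG \to G\times\CD$ be a submersion; you attribute the quotient-manifold structure to hypothesis~1 alone, whereas the paper explicitly notes that the second condition is supplied by the group-action form of $\CR$.
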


\begin{proof} Let $\CX$ be a manifold and let $\CR$ denote an equivalence relation defined on $\CX$. A classical
necessary and sufficient condition to define on the quotient space $\CX / \CR$ a unique quotient manifold structure
making the canonical surjection $\pi : \CX \ra \CX / \CR$ a submersion is the following:
the graph $\CG$ of the relation is an \thirdrev{embedded}{Remark 33. No need
for $\CG$ to be closed here.} submanifold in $\CX \times \CX$ and the first projection $\mathrm{pr}_1 : \CG \ra
\CX$ is a submersion (See~\cite[Th.3.5.25]{AMR}).

In the context of our lemma this condition comes from the first hypothesis and from the definition of the equivalence
relation via the group action:
\thirdrev{ 
let $((g,d),(h,e))\in \mathcal{G}$. Let $(\dot g,\dot d)\in T_{(g,d)}(\mathcal{G}\times\mathcal{D})$. Let $a(t)$ be a curve in $G$ and $b(t)$ a curve in $\mathcal{D}_{(k)}$ such that:
\[
a(0)=g,\quad \dot a(0)=\dot g,\quad b(0)=d,\quad \dot b(0)=\dot d.
\]
Then, consider the following curve contained in $(\mathcal{G}\times\mathcal{D})\times(\mathcal{G}\times\mathcal{D})$ defined by:
\[
\theta(t)=((a(t),b(t)),(a(t) g^{-1} h,h^{-1} g b(t))).
\]
It is clear that $\theta(0)=((g,d),(h,e))$ because
\[
h^{-1} g b(0) = h^{-1} g d = e . 
\]
It is also clear that
$D\mathrm{pr}_1(\theta(0))\ \theta'(0)=(\dot g,\dot d)$. Moreover, it is immediate that $\theta(t)$ is contained in $\mathcal{G}$. Thus, $\mathrm {pr}_1$ is a submersion.
}{Remark 34, proof by C., notation changes by G.}

Let $f : \CY \ra \CZ$ be a smooth map between two manifolds. Its image $f(\CY)$ is a submanifold in $\CZ$ when $f$ is an
immersion and a homeomorphism onto its image. 

\rev{By construction, $\immersion$ is smooth. 
It is a homeomorphism by the
third hypothesis and an immersion by the second one.
To check that it is injective, we have to show that
if $gd = g'd'$, then $(g,d) \CR (g',d')$. This follows
from the construction of the relation $\CR$.}{mi24} 

\end{proof}

\end{document}